\documentclass[12pt,reqno]{amsart}
\usepackage[utf8]{inputenc}
\usepackage{amsmath}
\usepackage{amsfonts}
\usepackage{amssymb}
\usepackage{amsthm}
\usepackage[left=3cm,right=3cm,top=3cm,bottom=3cm]{geometry}

\usepackage{hyperref} 

\newtheorem{theorem}{Theorem}
\newtheorem{lemma}{Lemma}
\newtheorem{definition}{Definition}
\newtheorem{proposition}{Proposition}

\newcommand{\f}{\varphi}

\newcommand{\na}{\nabla}
\newcommand{\A}{\mathcal{A}}

\newcommand{\N}{{\mathbb{N}}}

\newcommand{\D}{\partial}
\newcommand{\tr}{{\rm Tr}}

\begin{document}
\title{Convergence of the Hesse-Koszul flow on compact Hessian manifolds}

\author{St\'ephane Puechmorel and Tat Dat T\^o  
}
\date{\today}

\address{Ecole Nationale de l'Aviation Civile, Unversit\'e F\'ed\'erale de Toulouse\\
7, Avenue Edouard Belin\\
FR-31055 Toulouse Cedex}

\address{Institut Math\'ematiques de Toulouse \\ Universit{\'e} de Toulouse, CNRS, UPS
\\ 31062 Toulouse Cedex 09\\ France (Associated Researcher).}
\email{stephane.puechmorel@enac.fr}
\email{tat-dat.to@enac.fr}

\maketitle              
\begin{abstract}
We study the long time behavior of the Hesse-Koszul flow on  compact Hessian manifolds. When the first  affine Chern class is negative, we prove that the flow converges to the unique  Hesse-Einstein metric. We also derive a convergence result for a twisted Hesse-Koszul flow on any compact Hessian manifold. These results give  alternative proofs for the existence of the unique Hesse-Einstein metric by Cheng-Yau and Caffarelli-Viaclovsky  as well as the real Calabi theorem by  Cheng-Yau, Delano\"e and Caffarelli-Viaclovsky. 
\end{abstract} 
\section{Introduction}
An affine  manifold is  a real manifold $M$ which admits a flat, torsion free connection $\na$ on its tangent bundle. A Riemannian metric $g$ on an affine manifold $(M,\nabla)$ is called a {\sl Hessian metric} if $g$ can be locally expressed by $g=\nabla d\varphi $.  We then say $(M,\na, g)$ is a Hessian manifold. We consider the Monge-Amp\`ere operator:
\begin{equation}
   MA(\f):= \det(\na d \f)=\det \left( \frac{\partial^2 \varphi}{\partial x^i\partial x^j}\right)
\end{equation}
where $\{x^1,\ldots,x^n\}$ is an affine coordinate system with respect to $\nabla$. It was observed by 
Cheng-Yau \cite{CY} that this is a natural operator on affine manifolds since it is invariant under affine coordinate transformations. In particular, it is very similar to the complex Monge-Amp\`ere   operator since the  (real) Monge-Amp\`ere measure as $\mu_\f= \sqrt{\det(\f_{ij})} dx^1\wedge \ldots\wedge  dx^n$ is well-defined.  We refer the interested readers to \cite{Dui, Kos61, Kos62, Kos65, Kos68, CY, Shi, Tot, CV} and references therein for more details  on Hessian manifolds.  

\medskip
Hessian  manifolds and real Monge-Amp\`ere equations play a central role in many fields varying from mathematical physics to  statistics. They appear as  a large complex limits of  Calabi-Yau manifolds, which is in the framework of the Strominger-Yau-Zaslow \cite{SYZ},  and  the  Kontsevich-Soibelman \cite{KS} conjectures (see also \cite{As, GW,Hit, LYZ}).

  Recently Hessian manifolds have been interpreted as particular parameter spaces of  statistical models in which the Fisher-Rao metric is a Hessian metric (cf. \cite{AN,Ama,Ay,Lau}).  Studying  geometric structures of Hessian manifolds thus could lead to many applications in statistics. 

\medskip
In \cite{CY}, Cheng-Yau study the real Monge-Amp\`ere equation
\begin{eqnarray}\label{MAE}
 \det(g+\na d \f)= e^{\lambda \f +f }  \det{g},
\end{eqnarray}
with either $\lambda =0$ or $\lambda >0$.  When $\lambda=0$, solving this equation provides  a solution to  the real Calabi problem (cf. \cite{CY}): given $\eta\in c_1^a (M)$,  it shows that there is  a metric $\tilde{g}=g+\na d \f$ for some $\f\in C^\infty(M)$ such that $\kappa (g)=\eta$, where $\kappa(g)$ is the second Koszul form (see Definition \ref{def:Koszul_form}).  When $\lambda >0$, solving  this equation allows one to construct:   Hesse-Einstein metric $\tilde g=g+\na d \f$, i.e $\kappa(g)=-\lambda g$ which is a canonical metric on Hessian manifolds.  Hesse-Einstein metrics in Hessian geometry can be seen as the real version of the K\"ahler-Einstein metrics  in K\"ahler geometry (cf. \cite{CY,Lof02,Lof09, Shi})

\medskip
Assuming that $M$ is a special manifold Cheng-Yau \cite{CY} solved the equation \eqref{MAE} with $\lambda=0$ using the continuity method. Delano\"e then removed this condition in  \cite{Del}  but still relied on \cite{CY} for higher derivative estimates. In \cite{CY}, the authors also solved \eqref{MAE} with $\lambda>0$  by lifting this equation to $M+i\mathbb{R}^n$ and using methods from complex geometry. Caffarelli-Viaclovsky \cite{CV} then generalized these previous works solving  \eqref{MAE} assuming a minimal regularity for $f$.  They used the continuity method for $\lambda=0$ and the viscosity method for $\lambda >0$. We also refer to \cite{HM} for a recent variational approach with optimal transport point of view.

In this note, we  give an alternative approach using a geometric flow, namely the Hesse-Koszul flow.   
This flow has been defined and studied by Mirghafouri-Malek \cite{MM} on compact Hessian manifolds. Given  any compact Hessian manifold  $(M, \nabla,  g_0=\nabla d \psi )$ we define the following evolution equation 

\begin{equation}\label{HEF}
    \frac{\partial  g_{ij}}{\partial t}=-\beta_{ij}(g), \quad g|_{t=0}= g_0
\end{equation}
where $\beta(g) =-2 \kappa(g)$, with $\kappa$  the second Koszul form for $(\nabla, g)$ (see Definition \ref{def:Koszul_form}). Along the flow the evolved metric $g$ remains Hessian, this is why we call it the Hesse-Koszul flow. 

Similarly to the K\"ahler-Ricci flow,  we can rewrite the flow as a sacalar equation, namely the  parabolic 
Monge-Amp\`ere equation:

\begin{equation}\label{PMAE_0}
    \frac{\D }{\D t }\f= \log\dfrac{\det( \hat g(t)+\na d \f)}{\det g_0}+ f, \quad \f_0 =0
\end{equation}
where $\f$ is the unknown function and $\hat g (t)$ only depends on $t$, $g_0$ and the first affine Chern class $c_1^a(M)$ (see Definition \ref{first_aff_chern_class}).

\medskip
In \cite{MM}, the authors proved  the  short-time existence and the uniqueness of the flow on compact Hessian manifolds. When $\hat g$ is independent of $t$, they showed that the flow has a long time existence.  In this paper, we study the characterization of the maximal existence time and the long time behavior of the flow. 

Our first goal is to prove that the  the maximal  time for the existence of smooth solution is a cohomological constant like the one of the K\"ahler-Ricci flow (see for instance \cite{Cao,SW,Tos,TZ,Wei}):

 \begin{theorem}\label{thm:maximal_time_0}
Let $(M,\na, g)$ be a compact Hessian manifold. Then the Hesse-Koszul flow has unique smooth solution $g(t)$ on the maximal time interval [0,T), where
\begin{equation}
    T=\sup \{ t>0|\, [g_0]- tc^a_1(M)>0 \}.
\end{equation}
 \end{theorem}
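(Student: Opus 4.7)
\medskip\noindent\emph{Proof proposal.} The plan is to follow the by-now-standard strategy for the K\"ahler-Ricci flow (Cao, Tian-Zhang, cf.\ \cite{Cao,TZ}): derive the cohomological upper bound $T_{\max}\le T$ directly, and prove the matching lower bound $T_{\max}\ge T$ via uniform $C^\infty$ a priori estimates on the scalar potential solving the parabolic Monge-Amp\`ere equation \eqref{PMAE_0}, combined with the short-time existence result of \cite{MM}. The upper bound is immediate: along the Hesse-Koszul flow $\D_t g = 2\kappa(g)$, so the first affine Chern class of $g(t)$ evolves linearly as $[g(t)] = [g_0]-tc^a_1(M)$; as long as a smooth solution exists, $g(t)$ is a Hessian metric and hence $[g(t)]$ lies in the positive cone, forcing $t<T$.

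For the lower bound, suppose towards a contradiction that the maximal existence time $T_{\max}<T$. Fix $T'$ with $T_{\max}<T'<T$, choose a smooth family of Hessian reference metrics $\hat g(t)$ on $[0,T']$ with $[\hat g(t)] = [g_0]-tc^a_1(M)$, and pick $f$ so that \eqref{HEF} is equivalent to \eqref{PMAE_0} on $[0,T_{\max})$ with $\hat g(t) + \na d\f >0$. It suffices to prove uniform $C^\infty$ bounds on $\f$ up to $t=T_{\max}$: then $g(T_{\max})$ is a smooth Hessian metric, and applying \cite{MM} with this as initial condition extends the flow past $T_{\max}$, a contradiction. The a priori bounds would be derived in the usual order. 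A $C^0$ bound on $\f$ follows from the maximum principle applied to $\f\pm At$ for $A$ sufficiently large. Differentiating \eqref{PMAE_0} in $t$ gives a linear parabolic equation for $\D_t\f$; two-sided $L^\infty$ bounds on $\D_t\f$ then follow from Tian-Zhang-style maximum-principle arguments applied to auxiliary quantities such as $(T'-t)\D_t\f+\f+nt$. The Laplacian estimate is obtained by applying the maximum principle to $\log\tr_{g_0}(\hat g+\na d\f) - A\f$ for $A$ large, using an Aubin-Yau-type computation adapted to the real Hessian setting; combined with the Monge-Amp\`ere equation this gives two-sided bounds on the eigenvalues of $\hat g +\na d\f$ with respect to $g_0$. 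Finally, Evans-Krylov for concave fully nonlinear parabolic equations yields a $C^{2,\alpha}$ estimate, and parabolic Schauder theory bootstraps to $C^\infty$.

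The main obstacle will be the Laplacian estimate: it requires the real Hessian analogue of the Chern-Lu / Aubin-Yau Laplacian computation, with the curvature terms coming from the Levi-Civita connections of $g_0$ and of the time-dependent reference $\hat g(t)$ absorbed by the negative contribution $-A\f$ once the $C^0$ bound is invoked. Once this estimate is secured, Evans-Krylov applies because $F(H)=\log\det H$ is concave, and the remaining bootstrap to $C^\infty$ is routine.
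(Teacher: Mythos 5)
Your proposal follows essentially the same route as the paper's proof: the cohomological upper bound via $[g(t)]=[g_0]-tc_1^a(M)$, then for the lower bound a reduction to the scalar parabolic Monge--Amp\`ere equation with reference metrics $\hat g(t)=g_0+\frac{t}{T'}(g'-g_0)$ for a fixed $T'<T$ (Lemma \ref{lem:reduction_max}), maximum-principle bounds on $\f$ and $\dot\f$, a trace estimate, Evans--Krylov plus parabolic Schauder, and extension past the putative maximal time by contradiction using short-time existence. The only substantive variations are technical: for the second-order estimate the paper applies the maximum principle to $\log\tr_{g_0}g+C_0(t\dot\f-\f-nt)$, whose heat operator is exactly $-C_0\tr_g g_0$ by the computation of Lemma \ref{lem:evol_log_tr}, whereas you propose $\log\tr_{g_0}g-A\f$; your variant also works here because $\hat g(t)\geq \epsilon g_0$ uniformly on $[0,T']$ and you already have the two-sided bound on $\dot\f$, so this is a cosmetic difference. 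Likewise, for the upper bound on $\dot\f$ the paper uses the evolution equation $\partial_t\bar\beta=L_g\bar\beta+|\beta|_g^2$ for $\bar\beta=\tr_g\beta$ (Proposition \ref{thm:evol_trace_beta}) rather than a Tian--Zhang auxiliary function, but either device is standard and adequate.

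The one genuine omission is uniqueness: the theorem asserts that the smooth solution on $[0,T)$ is unique, and your proposal never addresses this. The paper disposes of it by proving a comparison principle for parabolic Monge--Amp\`ere equations (the proposition following Lemma \ref{lem:reduction_max}), applied through the scalar reduction; alternatively one may quote the uniqueness statement of \cite{MM}. You should either include such a comparison-principle argument or cite \cite{MM} explicitly, since without it your argument only yields existence on $[0,T)$ together with non-extendability past $T$.
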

For the proof we adapt some  K\"ahler-Ricci flow techniques to our case.  There is indeed a natural connection between Hessian and K\"ahler geometry, as first observed by Dombrowski \cite{Dom}: the tangent bundle over a Hessian manifold admits a K\"ahler metric induced by the Hessian metric.

Our second goal is to prove that the flow converges to a Hesse-Einstein metric  assuming that the first affine Chern class  is  negative. This gives an alternative proof for the result in  \cite{CY,CV} on the existence of Hesse-Einstein metrics:

\begin{theorem} Let $M$ be a compact Hessian manifold. 
 Assume  that $c_1^a(M)<0$, then starting  from any Hessian metric $g_0$, the normalized  Hesse-Koszul flow  
 $$
 \frac{\partial  g_{ij}}{\partial t}=-\beta_{ij}(g)-g$$
 exists for all time and converges in $C^\infty$ to a Hesse-Einstein metric $g_{\infty}$ satisfying
\begin{equation}\label{eq:HE_neg_0}
\beta(g_{\infty})=-g_{\infty}.
\end{equation} 
Moreover, $g_\infty$ is the unique solution to the Hesse-Einstein equation \eqref{eq:HE_neg_0}. 
\end{theorem}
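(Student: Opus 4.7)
The plan mirrors the Cao-Yau strategy for the normalized K\"ahler-Ricci flow in the case $c_1<0$, adapted to the Hessian/affine setting via Theorem~\ref{thm:maximal_time_0} and the scalar reformulation \eqref{PMAE_0}.

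Since $c_1^a(M)<0$, the class $[g_0]-tc_1^a(M)$ is positive for every $t>0$, so Theorem~\ref{thm:maximal_time_0} produces a smooth solution $g(s)$ of the unnormalized Hesse-Koszul flow on all of $[0,\infty)$. A standard time rescaling and conformal change, namely $\tilde g(t)=e^{-t}g(e^t-1)$, using the scale invariance $\beta(\lambda g)=\beta(g)$, converts this into a smooth solution of the normalized flow on $[0,\infty)$. The cohomology class of $\tilde g(t)$ then converges exponentially fast to the positive class $-c_1^a(M)$. Choosing a time-dependent reference Hessian metric $\hat g(t)$ with $[\hat g(t)]=[\tilde g(t)]$ and writing $\tilde g(t)=\hat g(t)+\na d\f(t)$ reduces the normalized flow to a scalar parabolic Monge-Amp\`ere equation of the form
\begin{equation*}
\D_t\f=\log\frac{\det(\hat g(t)+\na d\f)}{\det g_0}+f-\f,\quad \f|_{t=0}=0,
\end{equation*}
for a smooth function $f$ determined by $g_0$ and $\hat g(t)$.

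The heart of the proof is a set of uniform $C^\infty$ a priori estimates for $\f$, in the spirit of Cao's proof. The $C^0$ bound is immediate from the maximum principle: the damping term $-\f$ forces $\|\f\|_{C^0(M)}$ to remain bounded independently of $t$. Differentiating the equation in $t$ and reapplying the maximum principle to $\D_t\f$ yields the exponential decay $\|\D_t\f\|_{C^0(M)}\le Ce^{-t}$, which is what drives convergence. The second-order estimate is obtained by a Yau-type trace-Laplacian computation adapted to the real Hessian context, in the spirit of Cheng-Yau and Caffarelli-Viaclovsky; alternatively one may lift via Dombrowski's construction to a K\"ahler metric on $TM$ and invoke the classical complex $C^2$ estimate. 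Higher regularity then follows from Evans-Krylov (concavity of $\log\det$) combined with standard parabolic Schauder estimates and bootstrapping.

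Combined with the exponential decay of $\D_t\f$, these uniform bounds imply that $\f(t)\to\f_\infty$ in $C^\infty(M)$, and passing to the limit in the equation gives $\log(\det g_\infty/\det g_0)+f-\f_\infty=0$, which by the definition of $\beta$ is exactly $\beta(g_\infty)=-g_\infty$. Uniqueness is a short maximum-principle argument: writing two Hesse-Einstein solutions as $g_1$ and $g_2=g_1+\na du$, subtracting the scalar equations they satisfy yields a Monge-Amp\`ere identity that at the extrema of $u$ forces $u\equiv 0$. I expect the principal obstacle to be the second-order estimate: Yau's complex $C^2$ computation must be rewritten in the affine-connection framework, carefully tracking the flatness and torsion-freeness of $\na$ and the time dependence of $\hat g(t)$, whose evolution produces $O(e^{-t})$ error terms that need to be absorbed into the Laplacian bound.
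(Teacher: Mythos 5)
Your proposal follows essentially the same route as the paper: rescale the unnormalized flow (which is immortal by Theorem \ref{thm:maximal_time_0}) to the normalized one, reduce to a scalar parabolic Monge--Amp\`ere equation with reference metrics $e^{-t}g_0+(1-e^{-t})\hat g$ for a fixed $\hat g\in -c_1^a(M)$, obtain uniform $C^0$ bounds and exponential decay of $\dot\f$ by the maximum principle, a Cao/Yau-type trace computation in the affine setting for the $C^2$ bound, Evans--Krylov plus Schauder for higher order, and a maximum-principle argument for uniqueness. The only caveat is that the decay of $\dot\f$ does not follow from applying the maximum principle to $\dot\f$ alone, since the time-dependent reference metric produces the unsigned forcing term $e^{-t}\tr_g(\hat g-g_0)$, which is not a priori bounded before the $C^2$ estimate; the paper instead applies the maximum principle to the combinations $(e^t-1)\dot\f-\f-nt$ and $(e^t+B)\dot\f+B\f+nBt$, obtaining $|\dot\f|\le C(1+t)e^{-t}$, which is exactly the Cao-type device your sketch implicitly relies on.
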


Finally we give another proof for  a real version of  Calabi's conjecture due to \cite{CY,Del, CV}. We follow   Cao's approach (cf. \cite{Cao}) to the Calabi conjecture to  study the Hesse-Koszul flow twisted by  $\eta \in c_1^a(M)$:

\begin{theorem}\label{thm:conv_calabi_0}
Let $M$ be a compact Hessian manifold. 
The flow  
\begin{equation}\label{eq:THKF_0}
     \frac{\partial  g_{ij}}{\partial t}=-\beta_{ij}(g) + \eta, \quad g|_{t=0}=\hat g.
\end{equation}
 exists for all time and $C^\infty$-converges  to a metric $g_\infty $ which is the unique solution to 
\begin{equation}\label{eq:Calabi_0}
    \beta(g_\infty)=\eta.
\end{equation}

\end{theorem}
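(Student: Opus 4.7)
\medskip
\noindent\textbf{Proof proposal.} The plan is to mirror Cao's argument for the K\"ahler–Ricci flow approach to the Calabi conjecture, adapted to the Hessian/Koszul setting via the parabolic real Monge–Amp\`ere equation \eqref{PMAE_0}. First I would check that the twist has been tuned so that the cohomology class $[g(t)]\in H^{1,1}_{\nabla}(M)$ is preserved along \eqref{eq:THKF_0}: since $\eta\in c_1^a(M)$ and the Koszul form $\kappa(g)$ represents $c_1^a(M)$, the right-hand side $-\beta(g)+\eta$ is exact as an affine $(1,1)$-class, so $[g(t)]=[\hat g]$ for all $t$ of existence. Theorem \ref{thm:maximal_time_0} then immediately gives long-time existence: since $[g(t)]=[\hat g]$ never degenerates, $T=+\infty$. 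Writing $g(t)=\hat g+\nabla d\varphi(t)$, the flow reduces to the scalar parabolic Monge–Amp\`ere equation
\begin{equation*}
\frac{\partial\varphi}{\partial t}=\log\frac{\det(\hat g+\nabla d\varphi)}{\det \hat g}+f,\qquad \varphi(0)=0,
\end{equation*}
where $f\in C^\infty(M)$ is the Ritz potential determined by $\nabla d f=\eta-\beta(\hat g)$ together with a normalization fixing $\int_M e^{-f}\,d\mu_{\hat g}=\mathrm{Vol}(M,\hat g)$.

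\medskip
The next block of steps would establish uniform $C^\infty$ estimates. Applying the maximum principle to $\dot\varphi$, whose evolution equation is $\partial_t\dot\varphi=\Delta_{g(t)}\dot\varphi$, immediately gives $\|\dot\varphi\|_{C^0}\le\|\dot\varphi(0)\|_{C^0}=\|f\|_{C^0}$. The uniform bound $\|\varphi\|_{C^0}\le C$ is subtler, since there is no coercive $\lambda\varphi$ term; I would obtain it by the Tian–Zhang/Cao trick, applying the maximum principle to the auxiliary quantity $H:=\dot\varphi-\varphi-n t$ to get a one-sided bound, and to $\dot\varphi+\varphi-At$ (with $A$ depending on $\|f\|_{C^0}$) for the other side, after subtracting a time-linear function to preserve the cohomology class. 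The $C^2$ estimate $0<C^{-1}\le\mathrm{tr}_{\hat g}(g(t))\le C$ follows by computing $(\partial_t-\Delta_{g(t)})\log\mathrm{tr}_{\hat g}(g(t))$ as in Yau/Aubin, absorbing the negative part of the curvature of $\hat g$ with the term $-A\varphi$ and using $\|\varphi\|_{C^0}\le C$; this gives uniform ellipticity of \eqref{PMAE_0}. Higher regularity is then routine: Evans–Krylov yields a uniform $C^{2,\alpha}$ bound, and parabolic Schauder bootstrapping produces uniform $C^{k}$ bounds for all $k$.

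\medskip
To upgrade uniform boundedness to actual convergence, I would use a monotonicity argument. Differentiating the Monge–Amp\`ere (or Mabuchi-type) energy
\begin{equation*}
J(t):=\int_M \dot\varphi(t)^2\,d\mu_{g(t)}
\end{equation*}
and integrating by parts, one shows $\frac{d}{dt}J(t)\le -2\int_M |\nabla\dot\varphi|^2_{g(t)}\,d\mu_{g(t)}\le 0$, and moreover that $J(t)$ is bounded below. Combined with the uniform $C^\infty$ bounds and a parabolic Poincar\'e inequality on $(M,g(t))$ (whose constant is uniform by the $C^2$ estimates), this yields exponential decay of $\|\dot\varphi-c(t)\|_{L^2}$ and hence of $\|\dot\varphi-c(t)\|_{C^k}$ for every $k$, where $c(t)$ is the spatial average. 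Standard interpolation then promotes $\varphi(t)$ to converge in $C^\infty$ to some $\varphi_\infty$ modulo a linear-in-$t$ normalization, and $g_\infty:=\hat g+\nabla d\varphi_\infty$ is a smooth Hessian metric satisfying $\beta(g_\infty)=\eta$. Uniqueness inside the class $[\hat g]$ follows from the standard maximum principle applied to the difference of two potentials solving the stationary real Monge–Amp\`ere equation.

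\medskip
The main obstacle I anticipate is the step producing decay of $\dot\varphi$: the absence of a $\lambda\varphi$ coercivity term means one cannot quote the spectral gap of a linearized operator directly, so the Poincar\'e-type inequality along the flow must be extracted from the uniform $C^2$ estimate, and the energy functional must be chosen carefully so that its derivative controls a full Sobolev norm of $\dot\varphi-c(t)$. Once this is achieved, the rest of the argument is structurally parallel to Cao's original treatment of the K\"ahler–Ricci flow in the Calabi setting.
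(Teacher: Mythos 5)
Your outline follows the paper's general architecture (reduction to the scalar parabolic Monge--Amp\`ere equation, bound on $\dot\varphi$ from the heat equation it satisfies, Evans--Krylov plus Schauder, Cao-style convergence), but the central step --- the uniform $C^0$ estimate --- has a genuine gap. You assert $\|\varphi\|_{C^0}\le C$ via the maximum principle applied to $\dot\varphi-\varphi-nt$ and $\dot\varphi+\varphi-At$. This cannot work here: in this Calabi-type, infinite-time, unnormalized flow $\varphi$ is generically unbounded, because $\dot\varphi$ converges to a constant $c$ that need not vanish, and unlike the K\"ahler case there is no cohomological volume identity on a Hessian manifold that lets you normalize $f$ in advance so that $c=0$ (the limit equation is $\det g_\infty=e^{c-f}\det g_0$ with $c=\lim\frac{1}{V}\int_M\dot\varphi\,dV_{g_0}$, known only a posteriori; compare Theorem \ref{PMA_Rieman_0}, where the constant $c$ likewise survives). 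Concretely, $(\partial_t-L_g)(\dot\varphi-\varphi-nt)=-\dot\varphi-\mathrm{tr}_g\,g_0$ has no favorable sign, and there is no factor $(T'-t)$ as in the finite-time argument of Lemma \ref{lem:bound_phi_dot_max_time} to exploit. What is actually bounded is only the normalization $\tilde\varphi=\varphi-\frac{1}{V}\int_M\varphi\,dV_{g_0}$ (equivalently $\mathrm{osc}_M\varphi$), and this is where the real work of the paper lies: Proposition \ref{prop:C0_est_Calabi} proves it by an ABP-type argument --- either the parabolic Tso maximum principle following \cite{Tso,PT}, or the elliptic Blocki/Sz\'ekelyhidi argument \cite{Bl,Sze} applied to $\det(g_0+\nabla d\varphi)=e^{\dot\varphi-f}\det g_0$ --- combined with the $L^1$-compactness of normalized $g_0$-admissible potentials \cite{Hor}. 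Your proposal contains no mechanism playing this role. The same issue propagates to your $C^2$ step: the curvature term must be absorbed by $-B\tilde\varphi$ (as in Lemma \ref{lem:C2_Calabi}), not by $-A\varphi$; this is a harmless fix, but only once the $\tilde\varphi$ bound is in hand. Note also that the theorem's conclusion is unaffected by the drift of $\varphi$, since $g=g_0+\nabla d\varphi=g_0+\nabla d\tilde\varphi$.

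On the convergence step your route differs from the paper's: you propose a monotone energy plus Poincar\'e inequality argument giving exponential decay of $\dot\varphi-c(t)$, whereas the paper follows Cao \cite{Cao} and applies the Li--Yau Harnack inequality \cite{LY} to the uniformly parabolic heat equation satisfied by $\dot\varphi$, obtaining $\mathrm{osc}_M\dot\varphi(t)\le Ce^{-\alpha t}$ and then showing $\tilde\varphi+\frac{C}{\alpha}e^{-\alpha t}$ is monotone and bounded, hence convergent in $C^\infty$ by the uniform $C^k$ estimates. Your variant is plausible once the uniform estimates are available (the Poincar\'e constant is uniform by the $C^2$ bound), but it is a secondary difference: without a correct $C^0$ estimate for $\tilde\varphi$, the scheme does not get off the ground.
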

The long time existence is due to \cite{MM}.  For the proof of the convergence, we  derive uniform {\sl a priori} estimates by   adapting some K\"ahler-Ricci flow  techniques to our case, and by using a   new approach based on  \cite{PT} to prove the  $C^0$-estimate. 

\medskip
Finally we show that our approach can be applied to prove a  convergence result  for a parabolic Monge-Amp\`ere equation on compact Riemannian manifolds: 

\begin{theorem} \label{PMA_Rieman_0} Let $(M,g)$ be a compact Riemannian manifold and $\na $ be the Levi-Civita connection of $g$. 
The normalization  $\tilde{\f}:=\f-\frac{1}{Vol_g}\int_M \f dV_g$  of the solution  for  the flow 
\begin{equation}\label{PMA_riemannian}
\frac{\D }{\D t}\f (x,t)=\log \frac{\det( g(x)+ \na^2 \f(t,x) )}{\det g(x)}- f(x),
\end{equation}
$C^\infty$-converges   to a function $\tilde \f_\infty$.  In particular, the limit $\tilde{\f}_\infty$ is  a solution 
of the following Monge-Amp\`ere equation:
\begin{equation}\label{MA_Rieman}
\det(g+\na^2 \phi )=ce^f \det (g), 
\end{equation}
for some constant $c$.
\end{theorem}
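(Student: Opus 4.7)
The plan is to adapt the scheme used to prove the twisted Hesse-Koszul flow theorem (Theorem \ref{thm:conv_calabi_0}): the parabolic Monge-Amp\`ere equation \eqref{PMA_riemannian} has essentially the same structure as \eqref{PMAE_0}, only with $\na$ being the Levi-Civita connection of $g$ rather than a flat torsion-free one. Consequently the ellipticity and $\dot{\f}$-arguments transfer almost verbatim, and the additional curvature of $\na$ only contributes bounded error terms in the Laplacian estimate because $M$ is compact. Short-time existence follows from standard parabolic theory while $g+\na^2\f$ remains positive definite. Differentiating \eqref{PMA_riemannian} in $t$ shows that $\dot\f$ satisfies a linear parabolic equation with respect to the evolving metric $g_\f:=g+\na^2\f$, so the maximum principle gives $\|\dot\f(t)\|_{L^\infty(M)}\le\|\dot\f(0)\|_{L^\infty(M)}=\|f\|_{L^\infty(M)}$ for all times of existence.

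The main obstacle is the uniform $C^0$-estimate for the normalization $\tilde\f$. I would use the approach of \cite{PT} already invoked for Theorem \ref{thm:conv_calabi_0}: since $\dot\f$ and $f$ are uniformly bounded, the Monge-Amp\`ere measure $\det(g+\na^2\f)\,dV_g$ stays comparable to a fixed reference measure, and an Alexandrov-type comparison together with the normalization $\int_M\tilde\f\,dV_g=0$ yields $\|\tilde\f(t)\|_{L^\infty(M)}\le C$ uniformly in $t$. This step does not use the affine structure, so the arguments of \cite{PT} apply in the Riemannian setting with essentially the same proof modulo controlled curvature terms.

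With $\tilde\f$ controlled in $C^0$, I would apply the maximum principle to the Aubin-Yau test function $H=\log\mathrm{tr}_g(g+\na^2\f)-A\tilde\f$ for a large constant $A$ depending on a lower bound for the bisectional-type curvature of $g$. Using the bound on $\dot\f$ and the $C^0$-bound on $\tilde\f$, this produces an upper bound on the Laplacian of $\f$; combined with $\det(g+\na^2\f)=e^{\dot\f+f}\det g$ it gives $C^{-1}g\le g+\na^2\f\le Cg$ uniformly in $t$. The equation then becomes uniformly parabolic, and Evans--Krylov together with Schauder theory yield uniform $C^{k,\alpha}$ bounds for every $k$, in particular extending the flow to all time.

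For the convergence, I would show that the spatial oscillation $\mathrm{osc}_M\dot\f(t)$ decays exponentially. Since $\dot\f$ satisfies a uniformly parabolic linear equation with smooth coefficients by the previous step, a standard Harnack/energy argument as in Cao \cite{Cao} yields $\mathrm{osc}_M\dot\f(t)\le Ce^{-\delta t}$ for some $\delta>0$. Because $\na^2\f=\na^2\tilde\f$ and $\D_t\tilde\f=\dot\f-\mathrm{Vol}_g^{-1}\int_M\dot\f\,dV_g$, which has the same spatial oscillation as $\dot\f$, one concludes that $\tilde\f(t)$ is Cauchy in $C^\infty$ and converges to some $\tilde\f_\infty$. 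Passing to the limit in \eqref{PMA_riemannian} gives $\log\det(g+\na^2\tilde\f_\infty)-\log\det g-f$ equal to the constant $c=\lim_{t\to\infty}\mathrm{Vol}_g^{-1}\int_M\dot\f\,dV_g$, which is exactly \eqref{MA_Rieman}.
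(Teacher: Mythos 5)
Your proposal is correct and follows essentially the same route as the paper: uniform $\dot\f$ bound by the maximum principle, the $C^0$ bound on $\tilde\f$ via the parabolic ABP argument of \cite{PT}, the second-order bound from the maximum principle applied to $\log\tr_g(g+\na^2\f)-A\tilde\f$, Evans--Krylov and Schauder for higher orders, and Cao-style Harnack/oscillation decay of $\dot\f$ to conclude convergence and pass to the limit equation. No substantive differences from the paper's proof.
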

This flow was studied in \cite{Hui} where a long time existence was proved. We establish further  the convergence  of the flow. 
Our key ingredient of the proof is a uniform $C^0$ estimate for the normalization of $\f$ as in Theorem \ref{thm:conv_calabi_0}.  
Moreover  Theorem  \ref{PMA_Rieman_0} gives an alternative proof to the existence of  solutions for  the Monge-Amp\`ere equation \eqref{MA_Rieman} on compact Riemannian manifolds due to \cite{Del_JFA}.

\medskip
\noindent\textbf{Acknowledgement.} The  authors are grateful to  Vincent Guedj for  suggestions and encouragement.  We also would like to thank Yuxin Ge and  Hoang Chinh Lu for useful discussions.

\section{Preliminary}

\subsection{Affine manifolds, Hessian metric and Koszul forms}
\begin{definition}\cite{Shi}
An {\sl affine manifold} $(M,\nabla)$ is a differentiable manifold equipped with a flat, torsion-free connection $\nabla$.

A Riemannian metric $g$ on an affine manifold $(M,\nabla)$ is called a {\sl Hessian metric} if $g$ can be locally expressed by $g=\nabla d\phi  $. Then  $(M,\nabla, g)$ is called a {\sl Hessian manifold}.
\end{definition}
It is known that  a manifold $M$ is affine if and only if $M$ admits an affine atlas such that transition functions are  in the affine group ${\rm Aff}(n)=\{ \Phi: \mathbb{R}^n\rightarrow \mathbb{R}^n, \Phi(x)=Ax+b \}$.  

Let $(M,\nabla, g)$ be a Hessian manifold,
$g$ can be locally expressed by 
$$g_{ij}= \frac{\partial^2  \phi}{\partial x^i\partial x^j}$$
where $\{x^1,\ldots,x^n\}$ is an affine coordinate system with respect to $\nabla$.  

Denote $\hat \nabla$ the Levi-Civita of $(M,g)$, $\gamma= \hat \nabla- \nabla $. Since $\na$ and $\hat \na$ are torsion-free, we have
$$\gamma_X Y= \gamma _Y X.$$
Moreover, the components $\gamma^i{}_{jk}$ of $\gamma$ with respect to affine coordinate systems coincide with the Christoffel symbols $\Gamma ^i{}_{jk}$ of the Levi-Civita connection $\hat \nabla$.

The  tensor $Q=\nabla \gamma$ is  called the {\sl Hessian curvature tensor} for $(g,\nabla)$. We recall here some properties of the Hessian manifolds. 
\begin{proposition}\cite{Shi}
Let $(M, \nabla)$ be an affine manifold and $g$ a Riemannian metric on $M$. Then the following are equivalent:
\begin{enumerate}
    \item $g$ is a Hessian metric
    \item $(\nabla_X g)(Y,Z)=(\nabla_Y g)(X,Z)$
    \item $\dfrac{\D g_{ij} }{\D x^k}=\dfrac{\D g_{kj}}{\D x^i}$
    \item $g(\gamma _X Y, Z)= g(Y, \gamma _X Z)$
    \item $ \gamma_{ijk}=\gamma_{jik}$
\end{enumerate}
\end{proposition}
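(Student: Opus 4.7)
I would prove the Proposition by establishing the cycle of implications $(1) \Leftrightarrow (3) \Leftrightarrow (2) \Leftrightarrow (4) \Leftrightarrow (5)$, working locally in an affine coordinate chart $\{x^1,\dots,x^n\}$ throughout.

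The equivalence $(1) \Leftrightarrow (3)$ is purely local. The direction $(1) \Rightarrow (3)$ is immediate: if $g_{ij} = \partial^2\phi/\partial x^i \partial x^j$, then $\partial g_{ij}/\partial x^k = \partial^3\phi/\partial x^i\partial x^j \partial x^k$ is symmetric in all three indices. For the converse, I would apply the Poincar\'e lemma twice. Setting $\psi_i := \partial\phi/\partial x^i$, I want $\partial_j \psi_i = g_{ij}$; the integrability condition is $\partial_k g_{ij} = \partial_j g_{ik}$, which I obtain from (3) by using the symmetry of $g$ (the pair (3) plus $g_{ij}=g_{ji}$ gives $\partial_k g_{ij} = \partial_i g_{kj}= \partial_i g_{jk} = \partial_j g_{ik}$). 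Once $\psi_i$ exists locally, the symmetry $\partial_j\psi_i = \partial_i\psi_j$ (which is just $g_{ij}=g_{ji}$) lets me integrate again to produce $\phi$ with $\partial_i\phi = \psi_i$, giving (1).

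The equivalence $(2) \Leftrightarrow (3)$ is just a matter of rewriting (2) in affine coordinates. Since $\nabla$ is the flat coordinate connection, $\nabla_{\partial_k}\partial_i = 0$, hence $(\nabla_{\partial_k} g)(\partial_i,\partial_j) = \partial_k g_{ij}$, and (2) with $X=\partial_k,\, Y=\partial_i,\, Z=\partial_j$ becomes exactly (3); conversely (3) together with tensoriality yields (2).

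For $(2) \Leftrightarrow (4)$, the key identity is obtained by exploiting $\hat\nabla g = 0$ and $\gamma = \hat\nabla - \nabla$:
\begin{equation*}
(\nabla_X g)(Y,Z) = g(\gamma_X Y, Z) + g(Y, \gamma_X Z).
\end{equation*}
Subtracting the analogous formula with $X$ and $Y$ swapped and using $\gamma_X Y = \gamma_Y X$, the terms $g(\gamma_X Y,Z)$ and $g(\gamma_Y X,Z)$ cancel, so (2) is equivalent to $g(Y,\gamma_X Z) = g(X, \gamma_Y Z)$ for all $X,Y,Z$. I then need to show this is the same as (4). Starting from (4), I relabel $X\leftrightarrow Y$ and use the symmetries of $\gamma$ and $g$ to obtain $g(Y,\gamma_X Z)=g(X,\gamma_Y Z)$; conversely, relabeling the indices in the latter identity and reapplying the two symmetries recovers (4). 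Finally $(4)\Leftrightarrow(5)$ is obtained by applying (4) to coordinate vectors $\partial_i,\partial_j,\partial_k$ and lowering the upper index of $\gamma^l{}_{jk}$ with $g$; the torsion-free symmetry $\gamma_{ijk}=\gamma_{ikj}$ lets me convert the identity into the form stated in (5).

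The only slightly delicate step is $(3) \Rightarrow (1)$, where one must use the Poincar\'e lemma twice and combine (3) with the symmetry of $g$ to verify the integrability conditions; the remaining equivalences are essentially algebraic manipulations that use only the symmetry of $\gamma$ (from both connections being torsion-free) and the metric compatibility of $\hat\nabla$.
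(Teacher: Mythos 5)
Your argument is correct. Note that the paper does not prove this proposition at all: it is quoted with the citation \cite{Shi}, so there is no in-paper proof to compare against; your chain $(1)\Leftrightarrow(3)\Leftrightarrow(2)\Leftrightarrow(4)\Leftrightarrow(5)$, with the double application of the Poincar\'e lemma for $(3)\Rightarrow(1)$ and the identity $(\nabla_X g)(Y,Z)=g(\gamma_X Y,Z)+g(Y,\gamma_X Z)$ combined with the symmetry $\gamma_X Y=\gamma_Y X$ for the remaining equivalences, is exactly the standard argument found in Shima's book. The only point worth making explicit if this were written out is the convention $\gamma_{ijk}=g_{il}\gamma^l{}_{jk}$ and the observation that torsion-freeness gives the automatic symmetry in the last two slots, so that each of the stated index symmetries is equivalent to full symmetry of the lowered tensor.
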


\begin{proposition}\cite{Shi}
Let $\hat R$ be the Riemannian curvature 
of $g=\na d \phi$ and $Q=\nabla \gamma$ is   the Hessian curvature tensor for $(g,\nabla)$. Then

\begin{enumerate}
\item $Q_{ijkl}= \dfrac{1}{2}\dfrac{\D ^4 \phi}{\D x^i\D x^j \D x^k \D x^l}-\dfrac{1}{2}g^{pq}\dfrac{\D ^3 \phi}{\D x^i \D x^k \D x^p} \dfrac{\D ^3 \phi}{\D x^j \D x^l \D x^q}$
    \item $\hat R(X,Y) = -[\gamma_X, \gamma_Y], \quad \hat R^{i}{}_{jkl} = \gamma ^i{}_{lm} \gamma^m{}_{jk} -\gamma^i{}_{km}\gamma^m{}_{jl}.$
    \item $\hat R_{ijkl}= \dfrac{1}{2} (Q_{ijkl}-Q_{jikl})=-  \dfrac{1}{4}\phi^{pq} ( \phi_{ikp}\phi_{jlq}-\phi_{jkp}\phi_{ilq})
 $,\\
 where $\phi_{ikp}:=\dfrac{\D ^3 \phi}{\D x^i \D x^k \D x^p}$ and $(\phi^{pq})=(\phi_{pq})^{-1}$.
\end{enumerate}

\end{proposition}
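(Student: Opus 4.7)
The plan is to work entirely in an affine coordinate chart, where $\nabla_i = \partial_i$ has vanishing Christoffel symbols, and to compute everything in terms of the single potential $\phi$. The starting point is the identity $\Gamma^i{}_{jk} = \tfrac{1}{2}g^{il}(\partial_j g_{kl} + \partial_k g_{jl} - \partial_l g_{jk})$ for the Levi-Civita connection; since $g_{jk} = \phi_{jk}$ and mixed partials commute, the three summands coincide, giving the clean formula
\begin{equation*}
\gamma^i{}_{jk} = \Gamma^i{}_{jk} = \tfrac{1}{2}\, g^{il} \phi_{jkl}, \qquad \gamma_{ijk} = \tfrac{1}{2}\,\phi_{ijk}.
\end{equation*}
This expression will be the engine for all three formulas.

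For (1), I would define $Q^i{}_{jkl} = \nabla_k \gamma^i{}_{jl}$ (this is the convention that reproduces the stated index placement) and differentiate $\gamma^i{}_{jl} = \tfrac{1}{2} g^{im}\phi_{jlm}$ with respect to $\partial_k$, using the standard identity $\partial_k g^{im} = -g^{ip} g^{mq} \phi_{kpq}$. Lowering the top index with $g_{ni}$ collapses one Kronecker delta and produces exactly the two-term expression in (1) after relabeling dummies.

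For (2), I would use that $\hat\nabla = \nabla + \gamma$ with $\nabla$ flat and torsion-free. A short invariant computation (using the Leibniz rule for $\nabla$ acting on $\gamma$, cancelling the $\gamma \circ \nabla$ cross terms, and using $\nabla_X Y - \nabla_Y X = [X,Y]$ to absorb the $\gamma_{[X,Y]}$ piece) yields
\begin{equation*}
\hat R(X,Y)Z = (\nabla_X \gamma)(Y,Z) - (\nabla_Y \gamma)(X,Z) + [\gamma_X, \gamma_Y]Z.
\end{equation*}
To finish (2), I must prove the Hessian-specific identity $(\nabla_X \gamma)(Y,Z) - (\nabla_Y \gamma)(X,Z) = -2[\gamma_X, \gamma_Y]Z$. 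I would verify this in affine coordinates: using $\partial_k g^{il} = -g^{ip}g^{lq}\phi_{kpq}$ and the full symmetry of $\phi_{ijk}$, $\phi_{ijkl}$, the four-index $\phi_{\cdots}$ terms cancel by symmetry of mixed partials, while the $g^{-1}\phi_{\cdots}\phi_{\cdots}$ terms assemble into exactly $-2$ times the commutator written in components as $\gamma^i{}_{jm}\gamma^m{}_{kl} - \gamma^i{}_{km}\gamma^m{}_{jl}$. Substituting back gives the coordinate form of (2) with the minus sign.

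For (3), the first equality $\hat R_{ijkl} = \tfrac{1}{2}(Q_{ijkl} - Q_{jikl})$ follows by lowering the $i$-index in (2) and recognizing the right-hand side, using (1), as the antisymmetrization in the first two indices of $Q$; the $\tfrac{1}{2}\phi_{ijkl}$ parts of $Q_{ijkl}$ and $Q_{jikl}$ cancel because $\phi_{ijkl}$ is symmetric under $i \leftrightarrow j$. The second equality is then a one-line consequence of substituting (1) into $\tfrac{1}{2}(Q_{ijkl} - Q_{jikl})$ and relabeling dummies (after noting $g^{pq} = \phi^{pq}$, where $\phi^{pq}$ denotes $(\phi_{pq})^{-1}$). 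The main obstacle is purely bookkeeping: keeping the index conventions for $Q$ consistent, and carefully using the full symmetry of $\phi_{ijk\cdots}$ together with $\partial_k g^{il} = -g^{ip}g^{lq}\phi_{kpq}$ so that all the fourth-derivative terms cancel at the right moment. Once the convention $Q^i{}_{jkl} = \nabla_k \gamma^i{}_{jl}$ is fixed, the remainder is direct verification.
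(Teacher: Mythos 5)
The paper does not prove this proposition at all; it is quoted verbatim from Shima's book \cite{Shi}, so there is no in-paper argument to compare against. Judged on its own, your proposal is correct and is the standard derivation: the identity $\gamma^i{}_{jk}=\tfrac12 g^{il}\phi_{jkl}$ is right (the three Christoffel terms coincide by total symmetry of $\phi_{jkl}$), the convention $Q^i{}_{jkl}=\nabla_k\gamma^i{}_{jl}$ does reproduce the stated pairing of $(i,k)$ with $(j,l)$ after lowering, and the key identity $(\nabla_X\gamma)(Y,Z)-(\nabla_Y\gamma)(X,Z)=-2[\gamma_X,\gamma_Y]Z$ checks out because the fourth-order terms cancel by equality of mixed partials while $g^{mq}\phi_{kpq}\phi_{ljm}=g^{mq}\phi_{pkm}\phi_{qlj}$ under relabeling of dummies. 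Part (3) then follows from (1) and (2) by the antisymmetrization you describe, so the outline is complete.
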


\begin{definition}\label{def:Koszul_form} We define  first Koszul form $\alpha$ and the the second Koszul form $\kappa$ for $(\nabla, g)$ (cf. \cite{Kos61, Shi}) by
$$\nabla_X vol_g=\alpha(X) vol_g\quad {\rm and} \quad  \kappa=\nabla \alpha.$$
\end{definition}
It follows from the definition that \begin{eqnarray}
\alpha(X)&=&{\rm Tr} \gamma_X \\
\kappa (g) &= & \frac{1}{2} \nabla d (\log \det g), 
\end{eqnarray}
so 
\begin{eqnarray}
\alpha_i &=& \frac{1}{2}\dfrac{ \D \log \det [g_{pq}]}{\D x^i} = \gamma^k{}_{ki}\\
\kappa_{ij}&=&   \frac{1}{2}\dfrac{ \D^2 \log \det [g_{pq}]}{\D x^i\D x^j}. 
\end{eqnarray}
In the sequel we shall use the the tensor $\beta=-2\kappa$ instead of $\kappa$ to define the Hesse-Koszul flow.  

 We shall use the following  operator 
$L_g(f):= \tr_g \na df $ for any Riemannian metric $g$. In an affine coordinate system $\{x^1, \ldots,x^n\}$  with respect to $\na$, we have
\begin{eqnarray}
    L_g(f)&=&\tr_g\hat \na df  - \tr \gamma df\\
    &=& \Delta_g f   -g^{ij}\gamma^k{}_{ij}\partial_k f , 
\end{eqnarray}
where $\hat \na $ is the Levi-Civita connection of $g$ and $\gamma= \hat \na -\na $. $L$ is an elliptic operator and the maximum principle holds for $L$.
\subsection{Cohomology on affine manifolds and the first affine Chern class}
Let $(M,\na)$ be an affine manifold. Denote by 
$(  \wedge^p T^*M)\otimes(\wedge^q T^*M$), the tensor product of vector bundles   $  \wedge^p T^*M$ and $\wedge^q T^*M$.
Denote by $\A^{p,q}$ all smooth sections of $(  \wedge^p T^*M)\otimes (\wedge^q T^*M$). In an affine coordinate system with respect to $\na$, a $(p,q)$-form $\omega$ in $\A^{p,q}$ is expressed by
\begin{eqnarray*}
\omega &=& \sum \omega_{i_1 \ldots
 i_p; j_1\ldots j_q \ }(dx^{i_1}\wedge\ldots\wedge dx^{i_p})\otimes (dx^{j_1}\wedge\ldots\wedge dx^{j_q}) \\
 &=& \sum \omega_{I_p;J_q}dx^{I_p}\otimes dx^{J_q} \quad \text{with } I_p=(i_1, \dots, i_p), \, J_q=(j_1,\dots,j_q)
\end{eqnarray*}
where $\omega_{I_p;J_q}=\omega_{i_1 \ldots
 i_p; j_1\ldots j_q \ }$,  $dx^{I_p}=dx^{i_1}\wedge\ldots\wedge dx^{i_p}$.
 \begin{definition}
 For $\alpha \in \A^{p,q}$ and $\beta$ in $\A^{r,s}$, we define the exterior product
 $\alpha\wedge \beta \in \A^{p+r, q+s}$ by
 \begin{eqnarray}
 &&(\alpha\wedge \beta ) (X_1,\ldots,X_{p+r};Y_1,\ldots,Y_{q+s}) \\
&& \hspace{1cm} = \frac{1}{p!q!r!s!}\sum_{\sigma,\tau}\epsilon_\sigma \epsilon_\tau \alpha(X_{\sigma(1)},\ldots X_{\sigma  (p)}; Y_{\tau(1),}\ldots , Y_{\tau(q)})\\
  & & \hspace{3.5cm}\times \beta(X_{\sigma(p+1)},\ldots X_{\sigma  (p+r)}; Y_{\tau(q+1),}\ldots , Y_{\tau(q+s)}),
 \end{eqnarray}
 where the sum is taken over all permutation $\sigma, \tau$ and $\epsilon_\sigma$ (resp. $\epsilon_\tau$) is the sign of $\sigma$ (resp. $\tau$). 
 \end{definition}
 \begin{definition}
 We define $d'= d^\na \otimes I : \A^{p,q}\rightarrow \A^{p+1,q}$ and $d''=I \otimes d^\na: \A^{p,q}\rightarrow \A^{p,q+1}$, where $I$ is the identity operator and  $d^\na$ is the exterior derivative induced by $\na$.  
 \end{definition}

\begin{definition} \cite{CY,Shi}
We define a cohomology group $$\tilde{H}^k(M) = \{ \alpha\in \A^{p,q}| d'\alpha=0, d''\alpha=0\}/d'd''(\A^{k-1,k-1}).$$
\end{definition}

Let  $\Omega=\Omega(x) dx^1\wedge\ldots\wedge dx^n $ be a volume form on $M$.  Then the second Koszul form of $\Omega$ is defined by 
$$\kappa_\Omega=\sum  \frac{\D^2 \log \Omega(x)}{\D x^i\D x^j}dx^i\otimes dx^j.$$
Again we shall use the tensor $\beta_\Omega:=-2\kappa_\Omega$ to define the Hesse-Koszul flow. 
Denote by $[\kappa_\Omega]\in \tilde{H}^1(M)$ the class represented by $\kappa_\Omega$. If $\Omega'$ is another volume form, then there exists a function $f$ on $M$ such that $\Omega' = e^f \Omega$, so we have
$$\kappa_\Omega=\kappa_{\Omega'}+ \na d f.$$ By definition, we have
 $[\kappa_\Omega]=[\kappa_{\Omega'}]\in \tilde{H}^1(M)$, so we can define  the {\sl first affine Chern class} as follows:
\begin{definition}\label{first_aff_chern_class}
We set $c_1^a(M) :=-2[\kappa_\Omega]=[\beta_\Omega]\in \tilde{H}^1(M) $ to be the {\sl first affine Chern class}  of $M$, for any volume form $\Omega$. 
\end{definition} 
In particular,  if $(M,\na, g)$ is a  compact Hessian manifold then  $c^a_1(M)=-2[\kappa(g)]=[\beta(g)]$, where $\kappa(g)$ is the second Koszul form of $(\na, g)$ (see Definition  \ref{def:Koszul_form}).

Let $[\alpha]\in \tilde{H}^1(M)$, we say that $[\alpha]$ is positive (resp. semi-positive) and denote $[\alpha]>0$ (resp. $[\alpha]\geq 0$) if there exists $\alpha'\in [\alpha]$ such that $\alpha'>0$ (resp. $\alpha'\geq 0$).   Then we have the following theorem due to Shima \cite{Shi} (see also Delano\"e \cite{Del})
\begin{theorem}
Let $(M,\na, g)$ be compact Hessian manifold and $\alpha$ and $\kappa$ be the first and the second Koszul forms respectively. Then we have
\begin{itemize}
\item[(i)]  $$\int_M \tr_g \kappa dV_g=\int_M \|\alpha\|^2 dV_g\geq 0.$$ 
\item[(ii)] If $\int_M \tr_g \kappa dV_g=0$ then the Levi-Civita connection $\hat \na $ of $g$ coincides with $\na$.
\end{itemize}
In particular, the first affine Chern class $c_1^a(M)$ cannot be positive.
\end{theorem}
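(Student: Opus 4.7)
The plan is to prove (i) by a direct integration by parts in affine coordinates exploiting the Hessian structure of $g$, and then to derive (ii) by extracting $\alpha\equiv 0$ from (i) and showing the connection difference $\gamma=\hat\na-\na$ vanishes via the curvature formulas of Proposition 2.

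For (i), I work in an affine chart. Since $\na$ is flat and torsion-free, $\kappa_{ij}=\partial_i\alpha_j$ with $\alpha_i=\partial_i\log\sqrt{\det g}$, so $\tr_g\kappa=g^{ij}\partial_i\alpha_j$. The crucial ingredient is the divergence identity
\[
\partial_i\bigl(g^{ij}\sqrt{\det g}\bigr)=-\alpha^j\sqrt{\det g},
\]
obtained from $\partial_i\sqrt{\det g}=\alpha_i\sqrt{\det g}$ together with $\partial_ig^{ij}=-2\alpha^j$. The latter uses the Hessian property essentially: since $\partial_ig_{jk}=\partial_i\partial_j\partial_k\f$ is fully symmetric in $(i,j,k)$ and $g^{jk}\partial_ig_{jk}=2\alpha_i$, one gets $g^{ia}\partial_ig_{ab}=2\alpha_b$, whence $\partial_ig^{ij}=-g^{ia}g^{jb}\partial_ig_{ab}=-2\alpha^j$. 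Integration by parts on $\int_M g^{ij}\partial_i\alpha_j\sqrt{\det g}\,dx$ and Stokes' theorem on the closed manifold $M$ then produce the identity of (i), and the sign $\geq 0$ is immediate.

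For (ii), (i) forces $\alpha\equiv 0$. Substituting into the Ricci formula of Proposition 2 gives $\hat R_{jl}=\gamma^i{}_{lm}\gamma^m{}_{ji}$, which is pointwise positive semi-definite: $V^jV^l\hat R_{jl}=\tr\bigl((\gamma_V)^2\bigr)\geq 0$, the endomorphism $\gamma_V$ being $g$-symmetric by the full symmetry of $\gamma_{ijk}$. Tracing and invoking the same symmetry yields $\hat S=\|\gamma\|^2$. Thus it suffices to prove $\int_M\hat S\,dV_g=0$. I expect this to be the main obstacle, as $\alpha=0$ is only a trace constraint on the fully symmetric tensor $\gamma$ and the global vanishing of $\gamma$ requires genuine use of compactness. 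The natural approach is a second integration by parts via the affine-coordinate expression $\hat R_{ij}=\partial_l\gamma^l{}_{ij}-\gamma^l{}_{jm}\gamma^m{}_{il}$ (valid when $\alpha=0$), combined with the divergence identity from (i) and the vanishing $g^{ij}\gamma^l{}_{ij}=\alpha^l=0$, to extract an $L^2$ identity forcing $\gamma\equiv 0$; alternatively, one appeals to Shima's rigidity argument for compact special affine manifolds.

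The last assertion follows by contradiction: if $c_1^a(M)>0$, pick a positive representative $\eta>0$ with $\eta=-2\kappa(g)+\na d h$ for some $h\in C^\infty(M)$. Tracing and integrating against $dV_g$, then combining with (i), gives the relation $\int_M\tr_g\eta\,dV_g=-2\int_M\|\alpha\|^2\,dV_g+\int_M L_g h\,dV_g$. Part (ii) rules out $\alpha\equiv 0$ (which would force $\kappa\equiv 0$ and hence $c_1^a=0$), and the maximum principle for the elliptic operator $L_g$ applied to $h$, combined with the strict positivity of $\eta$, produces the required contradiction with $\int_M\tr_g\eta\,dV_g>0$.
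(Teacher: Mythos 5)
The paper itself contains no proof of this statement --- it is quoted from Shima and Delano\"e --- so I can only judge your argument on its own terms. Your part (i) is correct and is the standard computation: the Hessian symmetry of $\partial_i g_{jk}$ gives $\partial_i g^{ij}=-2\alpha^j$, hence $\partial_i(g^{ij}\sqrt{\det g})=-\alpha^j\sqrt{\det g}$, which is equivalent to the pointwise identity ${\rm div}_g(\alpha^\sharp)=\tr_g\kappa-\|\alpha\|^2$; since $\alpha^\sharp$ and $dV_g$ are globally defined, the divergence theorem on the closed manifold gives (i). (State this last point explicitly: your integration by parts is written in a single affine chart.)

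Part (ii) and the final assertion are genuinely incomplete. For (ii) you correctly reduce to showing that $\alpha\equiv 0$ forces $\gamma\equiv 0$ and you flag this as the main obstacle, but the route you sketch cannot close it: every ingredient you invoke is a pointwise identity in an affine chart, while the implication is false locally for $n\geq 2$ (a smooth non-quadratic convex solution of $\det(\f_{ij})={\rm const}$ on a ball has $\alpha=0$ and $\gamma\neq 0$). Concretely, equating your two expressions for $\hat R_{ij}$ and tracing gives $g^{ij}\partial_l\gamma^l{}_{ij}=2\|\gamma\|^2$, which is nothing but the trace of $\kappa=\na\alpha=0$ and carries no information, and combining the divergence identity of (i) with $g^{ij}\gamma^l{}_{ij}=\alpha^l=0$ collapses in the same way. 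A genuinely global, second-order input is required: either Calabi's computation that $\|\gamma\|_g^2$ is $\Delta_g$-subharmonic when the volume form is $\na$-parallel, combined with the strong maximum principle on compact $M$, or the Cheng--Yau/Shima/Delano\"e route through the developing map and the J\"orgens--Calabi--Pogorelov theorem; ``appealing to Shima's rigidity argument'' is just restating what must be proved. For the last assertion, your identity is correct, but $\int_M L_g h\, dV_g=\int_M\langle\alpha,dh\rangle_g\, dV_g$ is neither zero nor sign-controlled ($L_g$ fails to be self-adjoint for $dV_g$ precisely when $\alpha\neq 0$), and the maximum principle applied to $h$ only shows that $\kappa(g)$ is negative definite at an extremum of $h$, which does not contradict (i); thus your argument settles only the sub-case $\alpha\equiv 0$. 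The standard way to exclude $c_1^a(M)>0$ at the level of classes is to realize a positive representative $\eta$ as $\beta(\tilde g)$ of an actual Hessian metric $\tilde g$ (the real Calabi theorem, Theorem \ref{thm:conv_calabi_0} of this paper, due to Cheng--Yau, Delano\"e, Caffarelli--Viaclovsky), after which (i) applied to $\tilde g$ yields $0\leq \int_M\tr_{\tilde g}\kappa(\tilde g)\, dV_{\tilde g}=-\tfrac12\int_M\tr_{\tilde g}\eta\, dV_{\tilde g}<0$, a contradiction.
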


\section{Maximal existence time for the  flow on compact manifolds}
Let $(M,\na, g)$ be a compact Hessian manifold of dimension $n$. Consider the Hesse-Koszul flow
\begin{equation}\label{HKF_max}
    \frac{\partial  g_{ij}}{\partial t}= 2\kappa (g), \quad g|_{t=0}= g_0.
\end{equation}
For our convenience,  we shall write the 
Hesse-Koszul flow  as
\begin{equation}\label{HKF_max}
    \frac{\partial  g_{ij}}{\partial t}= -\beta (g), \quad g|_{t=0}= g_0,
\end{equation}
where $\beta=-2\kappa$ represents the first affine Chern class. 
In this section we prove the the maximal existence time for the Hesse-Koszul flow is a cohomological constant.  It only depends  on the first affine Chern class. 
Define 
\begin{equation}
    T=\sup \{ t>0|\, [g_0]- tc^a_1(M)>0 \},
\end{equation}
then the main result of this section is the following:
\begin{theorem}\label{thm:maximal_time}
Let $(M,\na, g)$ be a compact Hessian manifold. Then the Hesse-Koszul flow has unique smooth solution $g(t)$ on the maximal time interval [0,T).
 \end{theorem}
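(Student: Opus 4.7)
The plan is to adapt the Tian-Zhang/Cao maximal existence theorem for the K\"ahler-Ricci flow to the Hessian setting. The starting observation is a cohomological invariance: differentiating $[g(t)]$ in $t$ and using $[\beta(g)] = c_1^a(M)$ gives $[g(t)] = [g_0] - t c_1^a(M)$ in $\tilde H^1(M)$ along any smooth solution. Since a Hessian metric represents a positive class, the maximal existence time $T_{\max}$ must satisfy $T_{\max} \le T$. Uniqueness on the entire existence interval follows from the short-time uniqueness of \cite{MM} by a standard open-closed argument.

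For the opposite inequality, fix $t_0 < T$. By definition of $T$ there is a positive representative $\chi \in [g_0] - t_0\, c_1^a(M)$, and interpolating between $g_0$ and $\chi$ produces a smooth family of positive Hessian reference metrics $\hat g(t) \in [g_0] - t\,c_1^a(M)$, $t \in [0, t_0]$. Using the identity $\beta(g) - \beta(\hat g) = -\na d \log(\det g/\det \hat g)$ (which follows from $\beta = -\na d \log\det g$), the tensor flow is equivalent to the scalar parabolic Monge-Amp\`ere equation
$$\frac{\D \f}{\D t} = \log\frac{\det(\hat g(t) + \na d \f)}{\det g_0} + f, \qquad \f(0)=0,$$
for a fixed smooth function $f$ determined by $g_0$ and the chosen path $\hat g(t)$. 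Short-time existence is known, so the smooth extendability of $g$ up to $t_0$ reduces, via standard parabolic bootstrap, to uniform \emph{a priori} $C^0$, $\D_t \f$, and $C^2$ estimates on $\f$ over $[0, t_0]$.

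The $C^0$ bound on $\f$ is obtained by applying the maximum principle of the operator $L_{g(t)}$ to $\f \pm A t$ for a suitable constant $A$: at an interior extremum $\na d \f$ has a definite sign, which combined with the PDE yields two-sided bounds. The bound on $\D_t \f$ follows by differentiating the equation in $t$ and applying the parabolic maximum principle to the combination $t\, \D_t \f - \f$, yielding control on $\D_t \f$ in terms of the $C^0$ bound. The main obstacle, exactly as in the K\"ahler setting, is the $C^2$ estimate: a uniform upper bound for $\tr_{\hat g(t)}(\hat g(t) + \na d \f)$. I plan to establish this by Yau's second-order technique adapted to Hessian manifolds, applying the parabolic maximum principle to a quantity of the form $\log \tr_{\hat g(t)} g(t) - A \f$, and controlling the bad third-derivative terms via Shima's formulas for the Hessian curvature $Q_{ijkl}$ and $\hat R_{ijkl}$ recalled in Proposition 2; the uniform bounds on $\hat g(t)$ and on $Q(\hat g(t))$ over $[0, t_0]$ enter crucially here. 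A matching lower bound on $\na d \f$ then follows from the $\D_t \f$ and $C^2$-upper bound together with the PDE. Once $C^2$ is under control, Evans-Krylov and parabolic Schauder theory yield uniform $C^{k, \alpha}$ bounds on $[0, t_0]$ for every $k$, so the flow extends smoothly past $t_0$. Since $t_0 < T$ was arbitrary, $T_{\max} = T$.
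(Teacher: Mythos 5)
Your overall strategy is exactly the paper's: reduce to the scalar parabolic Monge-Amp\`ere equation with linearly interpolated reference metrics $\hat g(t)\in[g_0]-t\,c_1^a(M)$, prove uniform $C^0$, $\D_t\f$ and second-order estimates on $[0,t_0]$ for any $t_0<T$, bootstrap with Evans--Krylov and parabolic Schauder, and extend; the cohomological inequality $T_{\max}\le T$ is also the same. The minor deviations are harmless: you get uniqueness from the short-time uniqueness of Mirghafouri--Malek plus an open-closed argument, where the paper proves a comparison principle for the scalar flow, and your upper bound on $\D_t\f$ via $t\,\D_t\f-\f-nt$ replaces the paper's argument through the evolution equation $\D_t\bar\beta=L_g\bar\beta+|\beta|_g^2$; both routes work.

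There is, however, one genuine gap: the \emph{lower} bound on $\D_t\f$, equivalently the lower volume bound $\det g\ge c\,\det g_0$. The quantity $t\,\D_t\f-\f$ you propose satisfies $(\D_t-L_g)(t\dot\f-\f-nt)=-\tr_g g_0<0$ and therefore yields only the upper bound $\dot\f\le C/t$; neither it nor its negative gives any control from below, and a direct minimum principle on $\dot\f$ fails because $(\D_t-L_g)\dot\f=\tr_g\bigl(\tfrac{1}{t_0}(\chi-g_0)\bigr)$ has no sign. Your later step ``a matching lower bound on $\na d\f$ follows from the $\D_t\f$ and $C^2$-upper bound together with the PDE'' silently assumes the two-sided bound: an upper bound on $\tr_{g_0}g$ plus an \emph{upper} bound on $\dot\f$ does not prevent degeneration of $g$, and without $C^{-1}g_0\le g$ the equation is not uniformly elliptic, so Evans--Krylov cannot even be started. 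The paper closes this by applying the maximum principle to $H=(T'-t)\dot\f+\f+nt$, for which $(\D_t-L_g)H=\tr_g\hat g_{T'}>0$; note that this is precisely the point where the positivity of the chosen representative $\chi=\hat g_{T'}\in[g_0]-T'c_1^a(M)$ (i.e.\ the definition of $T$) enters the analysis, so this step is not a routine omission but the heart of the characterization of $T$. Adding this test function (with $T'=t_0$ in your notation) repairs the argument and makes your proof coincide with the paper's.
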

  We  follow the approach developed in K\"ahler geometry, to establish a  similar result for the K\"ahler-Ricci flow (cf. \cite{Cao,SW,Tos,TZ,Wei}). 
 \subsection{Reduction to parabolic Monge-Amp\`ere equation}
 
 Fix any $T'<T$, our goal is to  show that there exists a solution  for the flow \eqref{HKF_max} on $[0,T')$. The key ingredient is that we can rewrite the Hesse-Koszul flow \eqref{HKF_max} as a parabolic Monge-Amp\`ere equation.
 
Since $[g_0]-T'c_1^a(M)>0$, there exists a Hessian metric $g'\in  [g_0]-T'c_1^a(M) $, then 
$\frac{1}{T'}(g'-g_0)\in -c_1^a(M) $. Therefore 
$$\hat g(t) = g_0+ \frac{t}{T'}(g'-g_0) = \frac{
1  }{T'}((T'-t)g_0+ tg') $$
 is also a Hessian metric for all $t\in [0,T']$.  Fix $\Omega_0$ a smooth positive volume form on $M$. Since $\beta_{\Omega_0}\in c_1^a(M)$,  there exist a function $f\in C^{\infty}(M)$ such that
 $$\frac{1}{T'}(g'-g_0)= -\beta_{\Omega_0}+ \na d f.$$
 Then we define $\Omega
 = e^{f/2}\Omega_0$ which is a smooth positive volume form satisfying 
 $$\beta_\Omega = - \frac{1}{T'}(g'-g_0).$$
 By  abuse of notation, we shall write $\Omega$ as the local density as well, i.e  $$\Omega=\Omega(x)dx^1\wedge \ldots\wedge dx^n,$$
 and $\Omega^2 :=\Omega^2(x)$ the square of the density. 
 Then we have the following:
 \begin{lemma}\label{lem:reduction_max}
 A smooth family of Hessian metric $g_t$ on $[0,T')$ is the solution of the Hesse-Koszul flow \eqref{HKF_max} if and only if the parabolic equation
\begin{equation}\label{PMA_max}
    \dfrac{\D  }{\D t} \f =\log \dfrac{\det (\hat g_t + \na d \f) }{ \Omega^2}, \quad \tilde g_t + \na d \f>0, \quad \f|_{t=0}=0
\end{equation}
has a smooth solution $\f(t), t\in [0,T')$ such that $g_t= \hat g_t+\na d \f(t)$.
 \end{lemma}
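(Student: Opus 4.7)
The plan is to establish both directions by a direct computation, treating the flow equation and the scalar equation as tensor/scalar versions of the same identity.

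For the forward direction, suppose $\varphi(t)$ is a smooth solution of \eqref{PMA_max}. Set $g_t:=\hat g_t+\na d\varphi$; the positivity assumption in \eqref{PMA_max} makes $g_t$ a Riemannian metric, and it is Hessian because it is locally the sum of Hessians. We then compute
\begin{equation*}
\frac{\partial g_t}{\partial t}=\frac{\partial \hat g_t}{\partial t}+\na d\!\left(\frac{\partial \varphi}{\partial t}\right)=\frac{1}{T'}(g'-g_0)+\na d\log\frac{\det g_t}{\Omega^2}.
\end{equation*}
By construction $\frac{1}{T'}(g'-g_0)=-\beta_\Omega=2\kappa_\Omega=\na d\log\Omega^2$, and by the definition of the second Koszul form $\na d\log\det g_t=2\kappa(g_t)$. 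The two $\na d\log\Omega^2$ contributions cancel, leaving $\partial_t g_t=2\kappa(g_t)=-\beta(g_t)$. The initial condition $\varphi(0)=0$ together with $\hat g_0=g_0$ gives $g_t|_{t=0}=g_0$, so \eqref{HKF_max} is satisfied.

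For the reverse direction, suppose $g_t$ is a smooth Hessian solution of \eqref{HKF_max} on $[0,T')$. Define the candidate potential
\begin{equation*}
\varphi(t,x):=\int_0^t \log\frac{\det g_s(x)}{\Omega^2(x)}\,ds,
\end{equation*}
which makes sense globally since the ratio $\det g_s/\Omega^2$ is invariant under changes of affine coordinates. Then $\varphi(0)=0$ and $\partial_t\varphi=\log(\det g_t/\Omega^2)$ by construction. To show that $g_t=\hat g_t+\na d\varphi$, consider the tensor $F_t:=g_t-\hat g_t-\na d\varphi$. Differentiating in $t$ and using the same identities as above,
\begin{equation*}
\frac{\partial F_t}{\partial t}=-\beta(g_t)-(-\beta_\Omega)-\na d\log\frac{\det g_t}{\Omega^2}=2\kappa(g_t)+\beta_\Omega-2\kappa(g_t)+2\kappa_\Omega=0.
\end{equation*}
Since $F_0=g_0-\hat g_0-0=0$, we conclude $F_t\equiv 0$, so $g_t=\hat g_t+\na d\varphi$ on $[0,T')$; positivity of $g_t$ gives $\hat g_t+\na d\varphi>0$, so \eqref{PMA_max} holds.

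The main subtlety — and the only reason a global scalar $\varphi$ can exist at all — is that both sides of $g_t=\hat g_t+\na d\varphi$ represent the same class in $\tilde H^1(M)$: indeed, under the flow the cohomology class evolves as $[g_t]=[g_0]-tc_1^a(M)$, and by construction $[\hat g_t]=[g_0]+\frac{t}{T'}([g']-[g_0])=[g_0]-tc_1^a(M)$, so $[g_t-\hat g_t]=0$. In the argument above, this cohomological matching is what makes the integral formula for $\varphi$ actually produce the correct tensor identity (rather than matching only up to a closed $(1,1)$-form); the direct ODE-in-$t$ computation for $F_t$ packages this observation into a one-line check, which is why I would present the proof in that form rather than going through an explicit $d'd''$-lemma on $\tilde H^1(M)$.
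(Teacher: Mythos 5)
Your proof is correct and follows essentially the same route as the paper: the forward direction is the same direct computation using $\partial_t\hat g_t=-\beta_\Omega$ and $\na d\log\det g_t=-\beta(g_t)$, and the reverse direction defines $\f(t)=\int_0^t\log(\det g_s/\Omega^2)\,ds$ and shows the tensor $g_t-\hat g_t-\na d\f$ is time-independent and vanishes at $t=0$. The closing cohomological remark is fine but not needed, since the ODE argument is self-contained.
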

 
 \begin{proof}
 If $\f(t)$ satisfies the equation \eqref{PMA_max}, then we set $g(t)=\hat g(t)+ \na d \f(t)$. By a straightforward computation, we get 
 \begin{eqnarray}
 \frac{\D }{\D t}g(t) =  -\beta_{\Omega}+ \na d\log \frac{\det g(t)}{\Omega^2} = -\beta_{\Omega} + \beta_\Omega -\beta(g(t)) =-\beta(g(t)).  
 \end{eqnarray}
 Since $g(0)= g_0$, we imply that $g(t)$ is a solution of \eqref{HKF_max}.
 
 \medskip
 For the ``only if" assertion, given $g(t)$ a solution of \eqref{HKF_max}, we define for any  $t\in [0,T') $
 $$\f(t)=\int_0^t\log \frac{\det g(s)}{\Omega^2}ds.$$
 Then we get
 \begin{eqnarray}
\frac{\D }{\D t}\f(t)= \log \frac{\det g(t)}{\Omega^2},\quad \f(0)=0. 
 \end{eqnarray}
 We now prove that $g(t)= \hat g(t)+\na d \f(t)$ on $[0,T')$. 
 Since $$\na d \dot\f(t) =\na d  \log \frac{\det g(t)}{\Omega^2}= \beta_\Omega-\beta(g(t)),$$
 we obtain
 \begin{equation}
 \frac{\D}{\D t}(g(t)-\hat{g}(t)-\na  d\f (t))=  -\beta(g(t))+\beta_\Omega + \beta (g(t))-\beta_{\Omega}= 0. 
 \end{equation}
 At $t=0$ we have $g(0)-\hat{g}(0) -\na d\f(0)=0$,
 so $g(t)-\hat{g}(t) -\na d\f(t)=0$ on $[0,T')\times M$ as required. 
 \end{proof}
 The uniqueness in Theorem \ref{thm:maximal_time}  now follows from  the following comparison principle for parabolic Monge-Amp\`ere equations:

\begin{proposition} Let $(\hat g(t))_{t\in [0,S]}$ be a smooth family of Riemannian  metrics on $M$. 
Suppose that $\f, \psi\in C^{\infty}([0,S]\times M)$ satisfy $ \hat{g}(t)+\na d\f(t)>0$, $ \hat{g}(t)+\na d \psi(t)>0$ and 
\begin{eqnarray}
\frac{\D\f}{\D t}&\leq& \log \frac{\det(\hat g+\na d \f)}{\Omega^2}-F(t,x,\f),\\
\frac{\D\psi}{\D t}&\geq& \log \frac{\det(\hat g+\na d \psi)}{\Omega^2}-F(t,x,\psi),
\end{eqnarray}
where $F(t,x,s)$ be a smooth function with $\frac{\D F}{\D s}\geq -\lambda$. Then
\begin{equation}
\sup_{[0,S]\times M}(\f-\psi)\leq e^{\lambda S}\max \left\lbrace \sup_X(\f_0-\psi_0); 0 \right\rbrace.
\end{equation}

\begin{proof}
The proof follows  from the maximum principle. Fix $\epsilon>0$, define
$u(t,x)= e^{-\lambda t}(\f-\psi) -\epsilon t.$ Suppose that $u$ achieve its maximum at $(t_0,x_0)\in [0,S]\times  M$. We assume that $t_0>0$, otherwise we are done. At $(t_0,x_0)$, we have $\dot u\geq 0$ and $\na d u\geq 0$, hence
$$-\lambda e^{-\lambda t}(\f-\psi)+ e^{\lambda t}(\dot \f -\dot \psi)\geq \epsilon>0$$
and $\na d \f\leq \na d \psi.$
Therefore at $(t_0,x_0)$ we have
$$\dot \f -\dot \psi\leq - F(t,z,\f)+ F(t,x,\psi ), $$
and $\dot \f -\dot \psi > \lambda (\f-\psi) $, so
$$F(t_0,x_0,\psi(t_0,x_0))+\lambda \psi(t_0,x_0)> F(t_0,x_0,\f(t_0,x_0))+\lambda \f(t_0,x_0).$$
Since $\D F/\D s \geq -\lambda $, $s\mapsto F(\cdot,\cdot, s )$ is  increasing, it comes $\f(t_0,x_0)\leq \psi(t_0,x_0)$. Therefore $u(t,x)\leq u(t_0,x_0)\leq 0$. Letting $\epsilon\rightarrow 0$, gives
\begin{equation}
\sup_{[0,S]\times M}(\f-\psi)\leq e^{\lambda S}\max \left\lbrace \sup_X(\f_0-\psi_0); 0 \right\rbrace
\end{equation}
as required. 
\end{proof}

\end{proposition}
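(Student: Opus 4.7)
The plan is to apply a maximum principle to a suitable auxiliary function that neutralises the possible non-monotonicity of $F$ in its last argument. Concretely, for a small parameter $\epsilon>0$ I set $u(t,x)=e^{-\lambda t}(\f(t,x)-\psi(t,x))-\epsilon t$ on the compact cylinder $[0,S]\times M$. The $\epsilon t$ term is a standard perturbation ensuring that any interior maximum of $u$ will produce a strict inequality; the exponential weight $e^{-\lambda t}$ is the essential twist, chosen so that the time derivative term $-\lambda(\f-\psi)$ combines with $F(t,x,\f)-F(t,x,\psi)$ into the monotone quantity $s\mapsto F(t,x,s)+\lambda s$.

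Let $(t_0,x_0)$ be a point where $u$ attains its maximum. If $t_0=0$, then $u\le \sup_M(\f_0-\psi_0)$ by definition and we are done. Otherwise $t_0>0$, and the first- and second-order conditions at a maximum give $\D_t u(t_0,x_0)\ge 0$ and $\na du(t_0,x_0)\le 0$. Unwinding the first yields the strict inequality $\dot\f-\dot\psi>\lambda(\f-\psi)$ at $(t_0,x_0)$, while the second translates into the Hessian comparison $\na d\f\le \na d\psi$ at that point. Since both $\hat g+\na d\f$ and $\hat g+\na d\psi$ are positive definite, this Hessian comparison implies $\det(\hat g+\na d\f)\le \det(\hat g+\na d\psi)$, so the log-ratio is nonpositive.

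Subtracting the two differential inequalities at $(t_0,x_0)$ and using the log-determinant bound then gives $\dot\f-\dot\psi\le F(t_0,x_0,\psi)-F(t_0,x_0,\f)$. Combining this with the strict inequality from the time derivative condition produces $F(t_0,x_0,\f)+\lambda\f<F(t_0,x_0,\psi)+\lambda\psi$ at $(t_0,x_0)$. By the monotonicity hypothesis $\D F/\D s\ge -\lambda$, the function $s\mapsto F(t,x,s)+\lambda s$ is non-decreasing, and hence $\f(t_0,x_0)\le\psi(t_0,x_0)$. This forces $u(t_0,x_0)\le -\epsilon t_0\le 0$, so in either case $u\le\max\{\sup_M(\f_0-\psi_0),\,0\}$ throughout $[0,S]\times M$. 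Sending $\epsilon\to 0$ and multiplying by $e^{\lambda t}\le e^{\lambda S}$ yields the claimed bound.

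The only genuine subtlety is the bookkeeping in the first step: one must verify that the exponential weight really converts the two a-priori unrelated terms $\D_t(\f-\psi)$ and $F(\cdot,\cdot,\psi)-F(\cdot,\cdot,\f)$ into a comparison that the hypothesis $\D F/\D s\ge-\lambda$ can handle. Once this algebraic rearrangement is in place, the rest is a standard compact-cylinder maximum principle argument and there is no substantive PDE obstacle — the positivity of $\hat g+\na d\f$ and $\hat g+\na d\psi$ is used only through the elementary monotonicity of $\det$ on the cone of positive definite symmetric matrices.
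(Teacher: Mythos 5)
Your proposal is correct and follows essentially the same argument as the paper: the same auxiliary function $u=e^{-\lambda t}(\f-\psi)-\epsilon t$, the same case split at the maximum point, the same use of the Hessian comparison to control the log-determinant, and the same monotonicity of $s\mapsto F(t,x,s)+\lambda s$ to conclude $\f\le\psi$ at the maximum. No substantive differences beyond your making explicit the monotonicity of $\det$ on positive definite matrices, which the paper leaves implicit.
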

 \subsection{$C^0$ and $C^1$ Estimates}
 We now assume that the solution $\f$ of the parabolic Monge-Amp\`ere equation \eqref{PMA_max} on $[0,T_m)$ for $0<T_m<T'<T$. We shall establish uniform estimates for $\f$ on $[0, T_m)$. 
 The estimates for $\f$ and $\dot \f$ follow from the maximum principle.
 \begin{lemma}\label{lem:C0_max_time}
 There is a  uniform constant $C>0$ such that $\sup_M |\f (t)|\leq C,$ for all $t\in [0,T_m)$. 
 \end{lemma}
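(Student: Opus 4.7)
The plan is to prove a two-sided $L^\infty$ bound on $\varphi$ by the standard parabolic maximum principle applied to the auxiliary functions $\psi_\pm(t,x) = \varphi(t,x) \mp At$ for a suitably large constant $A>0$. Because the time window $[0,T_m)$ is contained in the fixed compact interval $[0,T']$ with $T'<T$, the bound produced this way will be independent of $T_m$, depending only on $T'$, $g_0$, $g'$, and $\Omega$.

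First I would record uniform control on the reference family. Since $\hat g(t) = \tfrac{1}{T'}\bigl((T'-t)g_0 + t g'\bigr)$ is a smooth family of positive-definite Hessian metrics on the compact manifold $M$ parametrized by $t\in[0,T']$, there is a constant $C_0>0$ with
\begin{equation}
\left| \log \frac{\det \hat g(t)}{\Omega^2} \right| \le C_0 \quad \text{on } [0,T']\times M.
\end{equation}
This is the only input about the reference geometry we need.

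For the upper bound, I would test $\psi(t,x) = \varphi(t,x) - At$ with $A = C_0+1$. If $\psi$ attained its supremum on $[0,T_m)\times M$ at an interior point $(t_0,x_0)$ with $t_0>0$, then $\dot\psi \ge 0$ and $\nabla d\psi \le 0$ there, so $\nabla d\varphi(t_0,x_0) \le 0$ as a symmetric bilinear form. The matrices $\hat g(t_0)+\nabla d\varphi$ and $\hat g(t_0)$ are both positive definite, and monotonicity of $\det$ on the positive cone gives $\det(\hat g(t_0)+\nabla d\varphi)\le \det \hat g(t_0)$. Plugging into \eqref{PMA_max} would yield
\begin{equation}
A \le \dot\varphi(t_0,x_0) = \log \frac{\det(\hat g(t_0)+\nabla d\varphi)}{\Omega^2} \le C_0,
\end{equation}
contradicting $A=C_0+1$. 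Hence the supremum of $\psi$ is achieved on the initial slice, where $\varphi_0 = 0$, and so $\varphi \le At \le AT'$ throughout. The lower bound is symmetric: with $\psi = \varphi + At$ and $A = C_0+1$, an interior minimum would force $\nabla d\varphi \ge 0$, hence $\det(\hat g+\nabla d\varphi) \ge \det \hat g$, and then $\dot\varphi \ge -C_0 > -A$, contradicting $\dot\psi\le 0$ at a minimum. Therefore $\varphi \ge -AT'$.

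I do not anticipate any serious obstacle: the argument is the Hessian-geometric transcription of Cao's $C^0$ bound for the parabolic complex Monge-Amp\`ere equation along the K\"ahler-Ricci flow, and it goes through verbatim once one observes that the operator $L_g$ from the Preliminary section satisfies the maximum principle and that $\det$ is monotone in the Loewner order on positive symmetric matrices. The only mild subtlety is verifying that the supremum/infimum of $\psi_\pm$ is attained — which follows since $[0,T_m)\times M$ need not be compact, but one may either test at $t_0 \in [0, T_m - \varepsilon]$ and let $\varepsilon \to 0$, or note that the continuity of $\psi$ up to any $t<T_m$ suffices since the final bound $\varphi \le AT'$ does not involve $T_m$.
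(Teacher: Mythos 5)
Your proof is correct and follows essentially the same route as the paper: the authors also apply the maximum principle to $\f - At$ (resp. $\f + Bt$) with $A,B$ chosen one more than $\sup$ (resp. minus $\inf$) of $\log\bigl(\det\hat g/\Omega^2\bigr)$, use monotonicity of the determinant at an interior extremum to reach a contradiction, and work on $[0,s]$ with $s<T_m$ to handle the half-open time interval. No substantive differences.
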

 \begin{proof}
 For the upper bound of $\f$ we apply the maximum principle to $H= \f- A\f$ for 
 $$A= 1+ \sup_{[0,T_m]\times M}\log \dfrac{\det \hat g}{\Omega^2},$$
where $\hat g(t) = g_0+ \frac{t}{T'}(g'-g_0)$.  For any $s\in [0,T_m)$,  suppose that
 $$\f(t_0,x_0)=\max_{[0,s]\times M}\f ,$$
 with $(t_0,x_0)\in [0,s]\times M$.  If $t_0>0$, then using the fact that $(\f_{ij}(t_0,x_0)) \leq 0$, we have
 \begin{equation*}
     0\leq \frac{\D \f}{\D t} =\log \dfrac{\det(\hat g+ \na d \f)}{\Omega^2} -A \leq \log \dfrac{\det(\hat g) }{\Omega^2} -A\leq -1,
 \end{equation*}
 so a contradiction. Hence $t_0=0$ and we get the upper bound for $\f$.  

\medskip
 Similarly, we use the same argument to $K= \f+ Bt$, where
 $$B= 1-\inf_{[0,T_m]\times M}\log \dfrac{\det \hat g}{\Omega^2},$$
  to get the lower bound of $\f$. 
 \end{proof}
We shall use the following evolution equation for $\bar\beta(g):=\tr_g\beta(g)= g^{ji}\beta_{ij}(g)$.
\begin{proposition}\label{thm:evol_trace_beta}
 The trace $\bar \beta $ of $\beta$ evolves by
 \begin{equation}
     \frac{\partial }{\partial t} \bar \beta = L_g \bar\beta +|\beta|_g^2,
 \end{equation}
 where
 $|\beta|_g^2 = g^{li}g^{jk} \beta_{ij}\beta_{kl}$. Therefore we have  lower bound
 $$\bar \beta \geq  -Ce^{-t},$$
 where $C= -\inf_{M}\bar \beta(0)-n$. 
\end{proposition}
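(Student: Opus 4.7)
The plan is to derive the evolution equation by a direct coordinate calculation in an affine chart, and then extract the lower bound by applying a minimum principle to an auxiliary function combining $\bar\beta$ with a constant shift.

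In an affine chart, the Hessian structure gives two key simplifications: $\beta_{ij} = -\partial_i\partial_j\log\det g$ (from $\beta=-2\kappa$ together with the expression for $\kappa$ in Definition \ref{def:Koszul_form}), and $L_g f = g^{ij}\partial_i\partial_j f$ (the Christoffel symbols of $\na$ vanish in such a chart). I would then differentiate $\bar\beta = g^{ij}\beta_{ij}$ in $t$ using two computational ingredients: differentiating the identity $g^{ij}g_{jk} = \delta^i_k$ yields $\partial_t g^{ij} = \beta^{ij}$; and the flow identity $\partial_t\log\det g = g^{ij}\partial_t g_{ij} = -\bar\beta$ together with commutation of $\partial_t$ and $\partial_i\partial_j$ yields $\partial_t \beta_{ij} = \partial_i\partial_j \bar\beta$. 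Assembling,
$$\partial_t\bar\beta = (\partial_t g^{ij})\beta_{ij} + g^{ij}\partial_t\beta_{ij} = \beta^{ij}\beta_{ij} + g^{ij}\partial_i\partial_j \bar\beta = |\beta|_g^2 + L_g\bar\beta,$$
which is the advertised PDE.

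For the lower bound I would apply the parabolic minimum principle to a shifted quantity. The Cauchy--Schwarz inequality $|\beta|_g^2 \geq \bar\beta^2/n$ implies $|\beta|_g^2 \geq -\bar\beta$ whenever $\bar\beta \leq -n$. Setting $u := \bar\beta + n$, the evolution becomes $\partial_t u = L_g u + |\beta|_g^2$, and the above observation yields
$$\partial_t u \geq L_g u + (n - u) \qquad \text{at any point where } u \leq 0.$$
Applying the envelope theorem to $m(t):=\min_M u(t,\cdot)$ at points where $m(t) < 0$, one gets the scalar differential inequality $m'(t) \geq n - m(t)$; integrating $\frac{d}{dt}(e^t m) \geq ne^t$ and unpacking $u = \bar\beta + n$ produces an inequality of the form $\bar\beta(t) \geq -Ce^{-t}$, with $C$ determined by $\inf_M\bar\beta(0)$ and $n$ exactly as in the statement.

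The main subtle point is justifying the pointwise evaluation at the spatial minimum when $\bar\beta$ itself is evolving. In the compact smooth setting this is handled by the standard Hamilton-type parabolic maximum principle for the operator $\partial_t - L_g$, using only that $L_g u \geq 0$ at an interior minimum of $u(t,\cdot)$ in the $x$-variable; the resulting ODE comparison is then elementary.
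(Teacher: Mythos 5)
Your derivation of the evolution equation is correct and follows the same direct computation as the paper: in affine coordinates one differentiates $\bar\beta=g^{ij}\beta_{ij}$, uses $\partial_t g^{ij}=\beta^{ij}$, $\partial_t\log\det g=-\bar\beta$ and $\partial_t\beta_{ij}=\partial_i\partial_j\bar\beta$, and identifies $g^{ij}\partial_i\partial_j$ with $L_g$; your signs are in fact more consistent than the printed intermediate steps, which contain typos but reach the same identity $\partial_t\bar\beta=L_g\bar\beta+|\beta|_g^2$. Note that the paper's own proof stops at this identity: the exponential lower bound is never argued there, and the only consequence used later (in Lemma \ref{lem:bound_phi_dot_max_time}) is $\inf_M\bar\beta(t)\ge\inf_M\bar\beta(0)$, which already follows from $|\beta|_g^2\ge 0$ and the minimum principle, with no need for the Cauchy--Schwarz refinement.

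The gap is in your final ``unpacking'' step. Carry out your own integration: with $u=\bar\beta+n$ and $m(t)=\min_M u(t,\cdot)$, the inequality $m'\ge n-m$ (valid while $m<0$) integrates to $m(t)\ge n+(m(0)-n)e^{-t}$, i.e. $\min_M\bar\beta(t)\ge e^{-t}\inf_M\bar\beta(0)$ on that time range, and once $m$ reaches $0$ the monotonicity of the minimum (from $|\beta|_g^2\ge0$) keeps $\bar\beta\ge -n$. What this proves is $\bar\beta+n\ge -Ce^{-t}$ with $C=-\inf_M\bar\beta(0)-n$, equivalently $\bar\beta\ge\min\{-n,\,e^{-t}\inf_M\bar\beta(0)\}$; the additive $n$ cannot be absorbed, so you do not obtain $\bar\beta\ge -Ce^{-t}$ ``exactly as in the statement.'' In fact no argument can: at $t=0$ the printed bound reads $\bar\beta(0)\ge \inf_M\bar\beta(0)+n$, which fails at a point realizing the infimum, so the statement as written is a misprint (the constant $C=-\inf_M\bar\beta(0)-n$ is evidently calibrated for the shifted quantity $\bar\beta+n$, or for the normalized flow). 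You should therefore either prove and state the corrected inequality $\bar\beta+n\ge -Ce^{-t}$ (which your method does give, and which is more than the paper itself establishes), or, for the purposes of the unnormalized flow where this proposition is applied, simply record the weaker bound $\inf_M\bar\beta(t)\ge\inf_M\bar\beta(0)$; asserting the literal displayed bound without flagging the discrepancy is the flaw in your write-up.
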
 
\begin{proof}
It follows from the flow that
\begin{eqnarray}\label{eq:flow_inverse}
\frac{\partial}{\partial t}g^{ij}&= &-g^{il}(\partial_t g_{lk})g^{kj}\\
&=& -g^{il}(-\beta_{lk})g^{kj} \\
&=& \beta^{ij},
\end{eqnarray}
where $(g^{kl})=(g^{-1})_{kl}$. 
Taking the trace both two sides of the flow \eqref{HKF_max}, we get
\begin{eqnarray} \label{eq:trace}
g^{lk}\partial_t g_{kl} =\bar \beta . 
\end{eqnarray}

Combining \eqref{eq:trace}, \eqref{eq:flow_inverse} and  $\beta_{ij}= \partial_i \partial _j \log \det [g_{kl}]$ yields
\begin{eqnarray} 
\partial_t \bar \beta &=& g^{ji }\partial_i \partial_j  (g^{kl}\partial _t g_{kl}) - \partial_{t} g^{ji} \beta_{ij}\\
&=& g^{ji }\partial_i \partial_j  (\bar{\beta}-n) - \partial_{t} g^{ji} \beta_{ij}\\
&=&  L_g \bar \beta   -  \beta^{ji}\beta_{ij}\\
&=& L_g \bar \beta   - g^{li} g^{jk}  \beta_{kl} \beta_{ij} \\
&=& L_g \bar \beta   + | \beta|_g^2
\end{eqnarray}
as required. 
\end{proof}
 
 \begin{lemma}\label{lem:bound_phi_dot_max_time}
 There is a uniform constant $C$ 
 such that
 $$\sup_M|\dot \f(t)|\leq C.$$
 for all $t\in [0,T_m)$. 
 
As a consequence,  we have $e^{-C}\Omega^2 \leq  {\det g(t)} \leq e^C \Omega^2$, so $C'^{-1}\det {g_0} \leq  {\det g(t)} \leq C' \det g_0$ for some constant $C'$ depending only on $g_0$ and $\Omega$. 
 \end{lemma}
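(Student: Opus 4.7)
The plan is to exploit that the parabolic equation rewrites as $\dot{\f} = \log(\det g / \Omega^2)$, so controlling $|\dot{\f}|$ amounts to controlling $\log\det g$ up to the bounded factor $\log \Omega^2$. Differentiating this identity in time and using the flow $\partial_t g_{ij} = -\beta_{ij}$ yields the key evolution
$$\partial_t \dot{\f} = g^{ij}\partial_t g_{ij} = -\bar{\beta},$$
which couples directly to Proposition \ref{thm:evol_trace_beta}.

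For the upper bound on $\dot{\f}$, I would simply invoke Proposition \ref{thm:evol_trace_beta}, which gives $\bar{\beta} \geq -Ce^{-t}$. Then $\partial_t \dot{\f} \leq Ce^{-t}$, and integrating in time yields $\dot{\f}(t,x) \leq \dot{\f}(0,x) + C$. Since $\dot{\f}(0,\cdot) = \log(\det g_0/\Omega^2)$ is a bounded function on the compact manifold $M$, a uniform upper bound follows immediately.

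For the lower bound I would apply the minimum principle to $H = \dot{\f} + A\f$ with $A>0$ to be chosen large. Setting $\chi := \partial_t \hat g = (g'-g_0)/T'$, a direct computation in affine coordinates using $(\partial_t - L_g)\dot{\f} = \tr_g \chi$ and $(\partial_t - L_g)\f = \dot{\f} + \tr_g \hat g - n$ gives
$$(\partial_t - L_g)H = A\dot{\f} + \tr_g(A\hat g + \chi) - An.$$
Since $\hat g(t)$ is a strictly positive linear combination of $g_0$ and $g'$ on $[0,T_m]$, it is uniformly bounded below by a positive multiple of $g_0$, so for $A$ sufficiently large we have $A\hat g + \chi \geq c_0 g_0$ uniformly in $t$. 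At an interior minimum of $H$ the maximum principle gives $\tr_g(A\hat g + \chi) + A\dot{\f} - An \leq 0$, and combining this with the arithmetic-geometric mean inequality $\tr_g g_0 \geq n(\det g_0/\det g)^{1/n}$ together with $\det g = e^{\dot{\f}}\Omega^2$ produces an inequality of the form $K e^{-\dot{\f}/n} \leq A(n - \dot{\f})$ at the minimum. Since the left side grows exponentially while the right side grows only linearly as $\dot{\f} \to -\infty$, this forces $\dot{\f}$ to be bounded below at the minimum point; combining with Lemma \ref{lem:C0_max_time} then yields a uniform lower bound on $\dot{\f}$ everywhere (the case where the minimum of $H$ is attained at $t=0$ being trivial).

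The stated consequence for $\det g$ is then automatic: $|\dot{\f}|\leq C$ together with $\dot{\f} = \log(\det g/\Omega^2)$ gives $e^{-C}\Omega^2 \leq \det g(t) \leq e^{C}\Omega^2$, and since $\Omega$ is smooth and positive on the compact manifold, $\Omega^2$ is uniformly comparable to $\det g_0$. The main subtlety is the lower bound on $\dot{\f}$: the AM--GM step and the choice of $A$ that makes $A\hat g + \chi$ positive-definite are what convert the degenerate parabolic inequality coming from the maximum principle into a genuine closed constraint on $\dot{\f}$ itself.
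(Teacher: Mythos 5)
Your proof is correct, and the upper bound is the same as the paper's (integrate $\D_t\dot\f=-\bar\beta$ against the lower bound for $\bar\beta$ from Proposition \ref{thm:evol_trace_beta}). For the lower bound, however, you take a genuinely different route. The paper uses the Tian--Zhang/Song--Weinkove style quantity $H=(T'-t)\dot\f+\f+nt$, computes $(\D_t-L_g)H=\tr_g\hat g_{T'}>0$, concludes by the minimum principle that $H$ is minimized at $t=0$, and then divides by $T'-t\geq T'-T_m>0$; the resulting constant depends on the gap $T'-T_m$ (harmless here, since $T_m<T'$ are fixed). You instead apply the minimum principle to $H=\dot\f+A\f$ with $A$ chosen so that $A\hat g+\D_t\hat g\geq c_0 g_0$, and at the minimum convert the differential inequality into the algebraic constraint $Ke^{-\dot\f/n}\leq A(n-\dot\f)$ via $\tr_g g_0\geq n(\det g_0/\det g)^{1/n}$ and $\det g=e^{\dot\f}\Omega^2$; boundedness of $\f$ (Lemma \ref{lem:C0_max_time}) then propagates the bound everywhere. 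Your identities $(\D_t-L_g)\dot\f=\tr_g\D_t\hat g$ and $(\D_t-L_g)\f=\dot\f+\tr_g\hat g-n$ check out, the handling of a minimum at the final time (one-sided $\D_t H\leq 0$) and at $t=0$ is fine, and your bound has the minor advantage of not degenerating as $T_m\to T'$; the paper's computation is slightly shorter since it avoids the AM--GM step and the choice of $A$. Both arguments rest on the same two inputs, the $C^0$ bound for $\f$ and the evolution of $\bar\beta$, so the proposal is a valid alternative proof.
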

 
 \begin{proof}
 We first have
 $$\dfrac{\D }{\D t}\dot \f =L_g \dot \f -\tr_g \beta_\omega, $$
 since $\beta_\Omega=\frac{-1}{T}(g'-g_0)$ and $\hat{g}(t)= g_0-t\beta_{\Omega}$, where  $L_g=\tr_g \na d$. 
 Therefore we have
 \begin{equation}
  \left( \frac{\D }{\D t}-L_g\right) ((T'-t)\dot{\f} )= -\dot \f -(T'-t)\tr_g \beta_\Omega.
 \end{equation}
We also have
\begin{equation}
  \left( \frac{\D }{\D t}-L_g\right) \f = \dot \f + \tr_g (g-\hat g)=\dot \f- \tr_g\hat{g}- n.
 \end{equation}
Let  $H= (T'-t)\dot \f + \f +nt$, then combining identities above gives
 \begin{eqnarray}
  \left( \frac{\D }{\D t}-L_g\right)H = \tr_{g}( -(T'-t)\beta_\Omega +  \hat{g}_t)=\tr_{g}\hat{g}_{T'}>0.
 \end{eqnarray}
 The maximum principle then implies that the minimum of $H$ is at $ t=0$. Therefore 
 $$(T'-t)\dot \f(t)  +nt\geq T'\dot{ \f}(0)\geq T'\inf_M \log\frac{\det g_0}{\Omega^2}\geq -CT', $$
so using Lemma \ref{lem:C0_max_time} and $T'-t>T'-T_m>0$ gives 
 $$\inf_M\dot \f(t)\geq -C,$$
for all $t\in [0,T_m)$. 

For the upper bound of $\dot \f $, we observe that
\begin{equation} \label{eq:phi''}
\D_t \dot\f=-\tr_g\beta(g)=-\bar \beta(g). 
\end{equation}
It follows from Proposition \ref{thm:evol_trace_beta} that,  along the flow, $\bar \beta$ satisfies 
\begin{equation}
    \frac{\partial }{\partial t} \bar \beta = L_g \bar\beta +|\beta|_g^2 ,
\end{equation} 
and so  $$\inf_M \bar\beta(t)\geq \inf_M \bar{\beta}(0)\geq -C,$$ for all $t\in [0,T_m)$. Combining with \eqref{eq:phi''} gives a uniform upper bound for $\dot \f$ for all $t\in [0,T_m)$ as required.
 \end{proof}
 \subsection{$C^2$-Estimate}
 Our goal in this section so to prove the following  estimate:
 \begin{theorem}\label{thm:C2_max_time}
     There exists a constant $C>0$ which depends only on $g_0$ such that 
     $$\sup_M \tr_{g_0}g\leq C,$$
     for all $t\in [0,T)$.
     
     Moreover, there  exists a constant $C'$ depending only on $g_0$  such that
     \begin{equation}     
     \frac{1}{C'} g_0\leq g(t)\leq C'g_0,
     \end{equation}
     for all $t\in [0,T_m)$.
 \end{theorem}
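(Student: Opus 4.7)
The plan is to adapt the Yau-type second-order estimate from the K\"ahler-Ricci flow to the Hessian setting via the parabolic maximum principle applied to an auxiliary quantity of the form $H = \log \tr_{g_0} g - A\f$ for a suitably large constant $A$.

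The first step is to derive a differential inequality for $\log \tr_{g_0} g$ under the heat-type operator $\frac{\D}{\D t} - L_g$. In an affine chart, the components $g_{ij}$ are simply second derivatives of Hessian potentials, so the computation runs formally parallel to the Chern--Lu calculation in K\"ahler geometry, with the role of the holomorphic bisectional curvature played by the Hessian curvature tensor $Q$ of $g_0$. Carrying out this computation carefully, one should obtain
\begin{equation*}
\left(\frac{\D}{\D t} - L_g\right) \log \tr_{g_0} g \leq C_1 \tr_g g_0,
\end{equation*}
where $C_1$ depends only on $g_0$ and on the smooth family $\hat g(t)$ of reference metrics. Since $\hat g(t)$ varies smoothly on the compact interval $[0,T']$, it is uniformly equivalent to $g_0$, say $c_0 g_0 \leq \hat g(t) \leq C_0 g_0$.

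Next I combine this with $L_g \f = \tr_g \na d\f = n - \tr_g \hat g(t)$ and the $C^0$-bounds on $\f$ and $\dot\f$ from Lemmas \ref{lem:C0_max_time} and \ref{lem:bound_phi_dot_max_time} to compute
\begin{equation*}
\left(\frac{\D}{\D t} - L_g\right) H \leq (C_1 - A c_0) \tr_g g_0 + C_2 A.
\end{equation*}
Choosing $A$ so that $A c_0 \geq C_1 + 1$, the right-hand side is bounded above by $-\tr_g g_0 + C_2 A$. Applying the maximum principle to $H$ on $[0,S] \times M$ for arbitrary $S<T_m$, at an interior maximum point the parabolic inequality forces $\tr_g g_0 \leq C_3$, which combined with the $C^0$-bound on $\f$ propagates to an upper bound $\tr_{g_0} g \leq C$ everywhere on $[0,T_m) \times M$, uniform in $T_m < T$.

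Finally, to upgrade this to the two-sided equivalence, I diagonalize $g(t)$ with respect to $g_0$ at each point, obtaining eigenvalues $\lambda_1,\dots,\lambda_n$. The upper bound $\sum_i \lambda_i = \tr_{g_0} g \leq C$ gives $\lambda_i \leq C$, while Lemma \ref{lem:bound_phi_dot_max_time} provides $\prod_i \lambda_i = \det g / \det g_0 \geq c_* > 0$, so each $\lambda_i \geq c_* / C^{n-1}$. The main obstacle is the first step: deriving the Chern--Lu type inequality in the Hessian framework requires careful manipulation of the third-order derivatives $\f_{ijk}$ in affine coordinates and a correct identification of the curvature contributions through $Q$; once this inequality is in hand, the rest of the argument is a standard maximum-principle adaptation from K\"ahler-Ricci flow theory.
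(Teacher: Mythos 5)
Your argument is essentially correct, but it uses a different barrier than the paper does for this particular theorem. You work with $H=\log\tr_{g_0}g-A\f$, choose $A$ large using the uniform equivalence $\hat g(t)\geq c_0 g_0$ on $[0,T']$, and then analyze an interior maximum, where the inequality $\tr_g g_0\leq C_3$ must be converted into a bound on $\tr_{g_0}g$ via the trace--determinant comparison (Lemma \ref{lem:ineq_volumes}) together with the determinant bound of Lemma \ref{lem:bound_phi_dot_max_time} --- this is exactly the scheme the paper itself uses later for the normalized flow (Proposition \ref{lem:trace}) and the twisted flow (Lemma \ref{lem:C2_Calabi}). The paper's proof of Theorem \ref{thm:C2_max_time} instead takes $H=\log\tr_{g_0}g+C_0(t\dot\f-\f-nt)$: since $(\partial_t-L_g)(t\dot\f-\f-nt)=-\tr_g g_0$ (because $\hat g(t)+t\beta_\Omega=g_0$ by construction), the bad term $C_0\tr_g g_0$ from Lemma \ref{lem:evol_log_tr} is cancelled exactly, so $(\partial_t-L_g)H\leq 0$, the maximum sits at $t=0$, and no interior-maximum analysis or use of $\hat g(t)\geq c_0 g_0$ is needed; the $C^0$ bounds on $\f,\dot\f$ are only invoked afterwards to unwind $H\leq C$ into $\tr_{g_0}g\leq C$. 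Your route buys nothing extra here but is more robust (it does not need the special algebraic identity relating $\hat g(t)$, $\beta_\Omega$ and $g_0$), while the paper's barrier is cleaner for this unnormalized flow.

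Two small points to tighten. First, at your interior maximum point you only get a bound on $\tr_g g_0$, so you should say explicitly that Lemma \ref{lem:ineq_volumes} combined with $C^{-1}\det g_0\leq\det g\leq C\det g_0$ yields $\tr_{g_0}g(t_0,x_0)\leq C$ before propagating via the $C^0$ bound on $\f$; you use this comparison at the end for the two-sided bound, but it is also needed at the maximum point. Second, the evolution inequality $(\partial_t-L_g)\log\tr_{g_0}g\leq C_1\tr_g g_0$ is the real content (the paper's Lemma \ref{lem:evol_log_tr}); you only assert that the Chern--Lu-type computation ``should'' give it, so your write-up leans on that lemma rather than proving it --- acceptable if you cite it, but it should not be left as an unverified step in a self-contained proof.
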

We first prove the following estimate for the trace of the metric along the Hesse-Koszul flow which can be seen as a real version of  the one of the K\"ahler-Ricci flow due to Cao \cite{Cao} (see \cite{SW, Tos} for details). 

\begin{lemma}\label{lem:evol_log_tr}
Let $g_0$ be a fixed Hessian metric on $M$. There exists a uniform constant $C_0$ depending only on the metric $g_0$ such that the solution $g(t)$ of the Hesse-Koszul flow satisfies 
  \begin{eqnarray}
  \left( \frac{\D}{\D t}-L_g \right) \log \tr_{g_0}g \leq C_0   \tr_g g_0.  \end{eqnarray}

\end{lemma}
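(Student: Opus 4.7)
The plan is to compute $(\D_t - L_g)\log u$ with $u = \tr_{g_0} g$ directly in local affine coordinates, exploit the total symmetry $\D_k g_{ij} = \D_i g_{kj}$ coming from the Hessian potential, and then absorb the remaining bad third-order terms using weighted Cauchy--Schwarz together with the positive gradient contribution $|\na u|_g^2/u^2$ arising from the logarithm.

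At any point $p \in M$, I would choose affine coordinates so that $g_0(p) = \mathrm{Id}$ and $g(p) = \mathrm{diag}(\lambda_1,\ldots,\lambda_n)$. Only \emph{linear} changes preserve the affine structure, so, in contrast with Kähler or Riemannian normal coordinates, the first derivatives $b_{ij,k} := \D_k g_{0,ij}(p)$ cannot be arranged to vanish; these encode the ``Hessian curvature'' of $g_0$ and produce extra contributions compared with Cao's Kähler--Ricci flow computation. Using the flow $\D_t g_{ij} = -\beta_{ij}(g) = \D_i\D_j \log\det g$, expanding $L_g u = g^{kl}\D_k\D_l u$, and applying the Hessian commutation $\D_k^2 g_{ii} = \D_i^2 g_{kk}$, the fourth-order contributions in the potential cancel and, at $p$, one obtains
\begin{equation*}
\D_t u - L_g u = -\sum_{i,k,l}\frac{a_{kl,i}^2}{\lambda_k\lambda_l} - 2\sum_{i,a,k}\frac{\lambda_i\, b_{ia,k}^2}{\lambda_k} + \sum_{i,k}\frac{\lambda_i\, D_{ii,kk}}{\lambda_k} + 2\sum_{i,j,k}\frac{b_{ij,k}\, a_{ij,k}}{\lambda_k},
\end{equation*}
with $a_{ij,k} := \D_k g_{ij}$ and $D_{ij,kl} := \D_k\D_l g_{0,ij}$. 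The third sum is immediately bounded by $C_0\, u\,\tr_g g_0$ from the $C^2$-norm of $g_0$ on the compact $M$.

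The hard part is the Cauchy--Schwarz bookkeeping for the remaining terms. Using $(\D_t - L_g)\log u = (\D_t u - L_g u)/u + |\na u|_g^2/u^2$ and computing $\D_k u = \sum_i(a_{ii,k} - \lambda_i b_{ii,k})$ at $p$, I would apply $(\sum_i x_i)^2 \le u\sum_i x_i^2/\lambda_i$ together with $(X-Y)^2 \le 2X^2 + 2Y^2$ to bound $|\na u|_g^2/u$ by a combination of $\sum a_{kl,i}^2/(\lambda_k\lambda_l)$ and $\sum \lambda_i b_{ia,k}^2/\lambda_k$. The crucial combinatorial identity
\begin{equation*}
\sum_{i,j,k}\frac{A_{ijk}^2}{\lambda_i\lambda_j} = \sum_{i,j,k}\frac{A_{ijk}^2}{\lambda_i\lambda_k} = \sum_{i,j,k}\frac{A_{ijk}^2}{\lambda_j\lambda_k},
\end{equation*}
valid for any totally symmetric tensor $A$, allows the first piece to be matched exactly against the ``good'' negative term. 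A second weighted Cauchy--Schwarz with parameter $\varepsilon$ splits the cross term $2\sum b_{ij,k}a_{ij,k}/\lambda_k$ between the remaining $-\sum a^2$ and a $b^2$ piece.

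The main obstacle is choosing the weights in the several Cauchy--Schwarz inequalities so that the coefficient of $-\sum a_{kl,i}^2/(\lambda_k\lambda_l)$ stays nonnegative after all cancellations, and so that the surviving terms involving $b_{ij,k}$ and $D_{ij,kl}$ collapse cleanly to a constant multiple of $\tr_g g_0 = \sum_k 1/\lambda_k$, with the constant depending only on $C^2$-bounds of the fixed reference metric $g_0$. A conceptual guide, worth recording, is the Dombrowski correspondence: $(M,\na,g_0)$ lifts to a Kähler manifold $(TM, g_0^{\C})$, the Hesse--Koszul flow lifts to a Kähler--Ricci flow on $TM$, and the desired inequality is morally Cao's classical trace estimate transported back to the real Hessian setting; the calculation above realises this correspondence explicitly in affine coordinates without leaving the real category.
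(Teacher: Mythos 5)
Your pointwise identity for $(\D_t-L_g)\log\tr_{g_0}g$ is correct, and up to that point your route coincides with the paper's: the fourth-order derivatives of the potential cancel by total symmetry of $\D_k g_{ij}$, the term involving $\D\D g_0$ is bounded by $C_0\,\tr_{g_0}g\,\tr_g g_0$, and everything reduces to absorbing the two remaining bad terms --- the gradient term $|\na u|_g^2/u^2$ (with $u=\tr_{g_0}g$) and the mixed term $2\sum_{i,j,k} b_{ij,k}a_{ij,k}/\lambda_k$ --- into the single negative square $-\frac1u\sum_{i,j,k} a_{ij,k}^2/(\lambda_i\lambda_j)$. The paper never meets the mixed term: it computes at a point normalized so that $\D_k(g_0)_{ij}(x_0)=0$, whence $\D_k u=\sum_i a_{ii,k}$, and then the sharp, coefficient-one inequality \eqref{ineq_trace_2}, obtained by completing the square with $A_{ijk}=\D_i g_{kj}-g_{ij}\D_k u/u$, matches the gradient term exactly against the negative square. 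You rightly observe that this normalization is not available in the affine category (affine changes of coordinates are linear, so $\D g_0$ transforms tensorially and cannot be made to vanish), but the replacement you propose is precisely the step you leave open, and as announced it cannot close.

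Concretely, the negative square is available with coefficient exactly one, and it is already needed in full to dominate the $a$-part of $|\na u|_g^2/u^2$: your splitting $(X-Y)^2\le 2X^2+2Y^2$ demands twice the subsum $\sum_{i,k}a_{ii,k}^2/(\lambda_i\lambda_k)$, and even a $(1+\delta)$-splitting, combined with the $\varepsilon$-fraction consumed by the weighted Cauchy--Schwarz on $2\sum b_{ij,k}a_{ij,k}/\lambda_k$, overdraws the budget by a positive multiple of a third-order quantity in the evolving metric (take $a$ concentrated on a single index triple, say only $a_{11,1}\neq 0$, to see the excess). No bound on $\D g(t)$ is available at this stage, so such an excess cannot be hidden in $C_0\tr_g g_0$, and the negative $b^2$-term cannot absorb an $a^2$-excess either. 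What is missing is a sharp identity that builds the $b$-terms into the completed square rather than treating them perturbatively; the one-dimensional case is instructive, since there all third-order terms cancel identically and one is left with $(\log\det g_0)''/w\le C_0\tr_g g_0$, i.e.\ only a Koszul-curvature quantity of $g_0$ survives. Equivalently, one can organize the computation so that $\D g_0$ enters only through curvature-type tensors of $g_0$. As it stands, your proposal is a plan whose decisive step is neither carried out nor achievable with the stated inequalities, so it does not yet prove the lemma (and, for the same reason, your remark about the paper's normalization is a point the paper's own write-up would need to address, not a substitute for the missing argument).
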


 \begin{proof}
 We now calculate at a point $x_0$ with affine coordinates $\{x^1, \ldots, x^n\}$ for  $\na$ such that $(g_0)_{ij}(x_0)=\delta_{ij}$ and $\D_k (g_0)_{ij}(x_0)=0$. At $x_0$, for $L_g= \tr_g \na d$, we have
 \begin{eqnarray}
  L_g(\tr_{g_0}g )&=& g^{ij}\D_i \D_j (g_0^{kl} g_{kl})\\
  &=& g^{ij}g_0^{kl}\D_i\D_j g_{kl} + g^{ij} \D_i\D_j g_{0}^{kl}g_{kl}.   
 \end{eqnarray}
Observe that
 $$g^{ij} \D_i\D_j g_{0}^{kl}g_{kl} \geq -C_0 \tr_{g_0} g \tr_g g_0,$$
 for  a constant $C_0>0$ depending only on $g_0$. 
We also have
 \begin{eqnarray}
  \frac{\D }{\D t}\tr_{g_0}g &=& g_0^{lk}\D_k\D_l \log \det (g) \\
  &= &g^{ji} g_0^{kl}\D_k\D_l g_{ji} -g^{jp}g^{qi}g_0^{kl} \D_k g_{ij}\D_l g_{pq}\\
  &=& g^{ji}g_0^{kl}\D_i\D_j g_{kl}  -g^{jp}g^{qi}g_0^{kl} \D_k g_{ij}\D_l g_{pq}.
 \end{eqnarray}
 This implies that
 
 \begin{eqnarray}
  \left( \D_t-L_g \right) \tr_{g_0}g \leq  C_0 \tr_{g_0} g  \tr_g g_0 -g^{jp}g^{qi} g_0^{lk}\D_k g_{ij}\D_l g_{pq},  \end{eqnarray}
 hence
  \begin{eqnarray}\label{ineq_trace}
  \left( \D_t-L_g \right) \log \tr_{g_0}g \leq C_0   \tr_g g_0 + \dfrac{1 }{\tr_{g_0}g} \left( - g^{jp}g^{qi} g_0^{lk}\D_i g_{kj}\D_l g_{pq} +\frac{g^{qk }\D_k\tr_{g_0}g \D_q\tr_{g_0}g}{\tr_{g_0}g} \right). \end{eqnarray}
  We now claim that the second term  satisfies
  \begin{equation}\label{ineq_trace_2}
        - g^{jp}g^{qi} g_0^{lk}\D_i g_{kj}\D_l g_{pq} +\frac{g^{lk }\D_k\tr_{g_0}g \D_l\tr_{g_0}g}{\tr_{g_0}g} \leq 0. 
  \end{equation}
  Indeed, since 
  $$g_0^{li}g^{jp}g^{qk} A_{ijk}A_{lpq}\geq 0, $$
  where 
  $$A_{ijk}= \D_i g_{kj}-g_{ij}\frac{\D_k \tr_{g_0}g}{\tr_{g_0}g},$$
  we get
  \begin{eqnarray*}
   0&\leq&  g^{jp}g^{qk} g_0^{li}\D_i g_{kj}\D_l g_{pq} + g_0^{li } g^{jp}g^{qk} g_{ij}g_{pl}   \dfrac{\D_k\tr_{g_0}g \D_q\tr_{g_0}g  }{\tr^2_{g_0}g} -2 g_0^{li}  g^{jp}g^{qk} 
   g_{lp} \D_k g_{il} \frac{\D_q \tr_{g_0} g }{\tr_{g_0}g}\\
   &=& g^{jp}g^{qk} g_0^{li}\D_i g_{kj}\D_l g_{pq}  +  g^{qk}  g_0^{li } g_{il}   \dfrac{\D_k\tr_{g_0}g \D_q\tr_{g_0}g  }{\tr^2_{g_0}g} - 2 g_0^{li}  \delta_{jl}g^{qk} 
 \D_k g_{il} \frac{\D_q \tr_{g_0} g }{\tr_{g_0}g}\\
 &=& g^{jp}g^{qk} g_0^{li}\D_i g_{kj}\D_l g_{pq}  + \frac{g^{qk }\D_k\tr_{g_0}g \D_q\tr_{g_0}g}{\tr_{g_0}g} -2 g_0^{li} g^{qk} \D_k g_{il} \frac{\D_q \tr_{g_0} g }{\tr_{g_0}g}.
  \end{eqnarray*}  
  Therefore we have
  \begin{eqnarray}
   0\leq  g^{jp}g^{qi} g_0^{lk}\D_i g_{kj}\D_l g_{pq} + \frac{g^{qk }\D_k\tr_{g_0}g \D_q\tr_{g_0}g}{\tr_{g_0}g} -2 g_0^{li} g^{qk} \D_k g_{il} \frac{\D_q \tr_{g_0} g }{\tr_{g_0}g}.
  \end{eqnarray}  
  Since $g_0^{li}\D_k g_{il}= \D_k \tr_{g_0}g -g_{il} \D_k g_0^{li} =\D_k \tr_{g_0}g$, hence
  
  \begin{eqnarray}
   0\leq  g^{jp}g^{qi} g_0^{lk}\D_i g_{kj}\D_l g_{pq} - \frac{g^{qk }\D_k\tr_{g_0}g \D_q\tr_{g_0}g}{\tr_{g_0}g},
  \end{eqnarray}  
  as required. The desired inequality now follows from \eqref{ineq_trace}
 and \eqref{ineq_trace_2}.
   \end{proof}
  
    \begin{proof}[Proof of Theorem \ref{thm:C2_max_time}] 
It follows form Lemma \ref{lem:evol_log_tr} that 
   \begin{eqnarray}
 \left(\frac{\D}{\D_t} -L_g \right) \log \tr_{g_0}g \leq C_0   \tr_g g_0 , \end{eqnarray}
 hence 
 \begin{equation}
  \left(\frac{\D}{\D_t} -L_g \right) \left(\log\tr_{g_0}g +C_0(t\dot \f(t)-\f(t)-nt) \right)\leq 0
 \end{equation}
Using the maximum principle, the function $H= \log\tr_{g_0}g +C_0(t\dot \f(t)-\f(t)-nt) $ achieves its maximum at $t=0$. hence
$$\log \tr_{g_0}g\leq C_1-C_0C_0(t\dot \f(t)-\f(t)-nt).$$
 Since $\f$ and $\dot \f$ are uniformly bounded, we imply that $\tr_{g_0}g$ is uniformly bounded from above as required. The second assertion follows from the next Lemma and the fact that $C^{-1} g_0\leq \det g(t)\leq C \det g_0$ (cf. Lemma \ref{lem:bound_phi_dot_max_time}).
\end{proof}

\begin{lemma}\label{lem:ineq_volumes}
If $g_1$ and $g_2$ are two metrics on a compact Riemannian  manifold $M$, then
\begin{equation}
    \left( \dfrac{\det g_2}{\det g_1}\right)^{1/n} \leq \frac{1}{n} \tr _{g_1}g_2\leq \left( \dfrac{\det g_2}{\det g_1}\right) (\tr_{g_2}g_1)^{n-1}.
\end{equation}
In particular, if there exists a constant $C>0$ such that  $C^{-1}\det g_1\leq \det g_2\leq C \det {g_1}$, we have
$$\tr_{g_1} g_2\leq C_1 \Longleftrightarrow \tr_{g_2} g_1\leq C_2\Longleftrightarrow C^{-1}_3g_1 \leq g_2\leq C_3 g_1,$$ 
where for each equivalent relation  $C_i$ depends only on $C$ and $C_j$ with $j\neq i$.
\end{lemma}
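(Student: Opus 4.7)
The entire statement is pointwise at each $x\in M$ and purely linear-algebraic: it is a family of inequalities between the eigenvalues of $g_2$ relative to $g_1$. My plan is therefore to simultaneously diagonalize $g_1$ and $g_2$ at a point, reducing everything to inequalities among $n$ positive reals $\lambda_1,\dots,\lambda_n$, and then to verify separately the two halves of the main display and the equivalences stated in the ``In particular'' clause.

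For the setup I would pick a basis in which $g_1=I$ and $g_2=\mathrm{diag}(\lambda_1,\dots,\lambda_n)$ with $\lambda_i>0$, so that the three quantities appearing in the lemma become $\det g_2/\det g_1=\prod_i\lambda_i$, $\tr_{g_1}g_2=\sum_i\lambda_i$, and $\tr_{g_2}g_1=\sum_i 1/\lambda_i$. The left inequality is then exactly the classical AM--GM inequality applied to $\lambda_1,\dots,\lambda_n$. For the right inequality, after relabelling assume $\lambda_1\geq\cdots\geq\lambda_n$; the strategy is to use the crude bound $\tr_{g_2}g_1\geq 1/\lambda_n$, substitute it into $(\det g_2/\det g_1)(\tr_{g_2}g_1)^{n-1}$, and exploit the ordering to collapse the resulting expression down to $\lambda_1=\max_i\lambda_i$, which finally dominates $\frac{1}{n}\sum_i\lambda_i=\frac{1}{n}\tr_{g_1}g_2$.

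The ``In particular'' clause is a short bootstrap built from the two main inequalities. Given $C^{-1}\det g_1\leq \det g_2\leq C\det g_1$, an upper bound $\tr_{g_1}g_2\leq C_1$ yields, via the first inequality, only redundant information about $\det g_2/\det g_1$; the key point is that applying the second inequality with the roles of $g_1$ and $g_2$ exchanged turns the assumed upper bound on $\tr_{g_1}g_2$ together with the bound on the determinant ratio into an upper bound on $\tr_{g_2}g_1$ of the form $nC\,C_1^{n-1}$. The reverse implication is completely symmetric. Finally, having both $\sum_i\lambda_i$ and $\sum_i 1/\lambda_i$ bounded forces each $\lambda_i$ to lie in a fixed interval $[C_3^{-1},C_3]$, which is precisely the statement $C_3^{-1}g_1\leq g_2\leq C_3g_1$.

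I do not anticipate any serious obstacle. The only step that requires more than a line to write out is the middle estimate in the right-hand inequality, namely the chain $\lambda_1\cdots\lambda_{n-1}/\lambda_n^{n-2}\geq \lambda_1$, which is nothing more than the observation that $\lambda_i\geq\lambda_n$ for all $i<n$; the rest is AM--GM, the trivial bound $\max_i\lambda_i\geq \frac{1}{n}\sum_i\lambda_i$, and symmetry.
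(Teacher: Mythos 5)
Your proposal is correct and follows essentially the same route as the paper: reduce pointwise to the eigenvalues $\lambda_i$ of $g_2$ relative to $g_1$, get the left inequality from AM--GM, and get the right one by bounding $\bigl(\det g_2/\det g_1\bigr)(\tr_{g_2}g_1)^{n-1}$ below by $\lambda_{\max}\geq \frac{1}{n}\sum_i\lambda_i$ (the paper normalizes $\prod_i\lambda_i=1$ instead of using your crude bound $\sum_i 1/\lambda_i\geq 1/\lambda_{\min}$, but this is the same computation). The bootstrap for the ``in particular'' clause, via swapping the roles of $g_1,g_2$ and the trivial eigenvalue pinching, matches what the paper leaves as ``straightforward.''
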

\begin{proof}
Let $0<\lambda_1\leq \ldots\leq \lambda_n$ be the eigenvalues of $g_2$ with respect to $g_1$ (at a given point in $M$). The assertion is  now
\begin{equation}
(\lambda_1\ldots\lambda_n)^{1/n}\leq \frac{1}{n}\sum_i\lambda_j\leq  \lambda_1\ldots\lambda_n \left(\sum_j \frac{1}{\lambda_j}\right)^{n-1}.
\end{equation}
The left hand side inequality is nothing but the arithmetico-geometric inequality. For the second one, we can aussume that $\lambda_1\ldots\lambda_n=1$, then 
$$\left(\sum_j \frac{1}{\lambda_j}\right)^{n-1}\geq \lambda_1^{-1}\ldots \lambda_{n-1}^{-1}\geq \frac{1}{n }\sum_j\lambda_j$$
as required. The second claim is straightforward from the first one.
\end{proof}
\subsection{Higher estimates and proof of Theorem \ref{thm:maximal_time}}
We now can use the Evans-Krylov theorem and Schauder estimates to get $C^k$ estimates for all $k\geq 0$
\begin{proposition}\label{prop:Ck_max_time}
For any $k\in \N$, there exists a uniform constant $C_k>0$ such that 
$$\|\f(t)\|_{C^k(M)}\leq C_k,$$
for all $t\in[0,T_m)$.
\end{proposition}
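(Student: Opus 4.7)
My plan is to apply the parabolic Evans--Krylov theorem followed by a parabolic Schauder bootstrap. The two structural inputs are: (i) the uniform two-sided bound $C^{-1} g_0 \le g(t) \le C g_0$ from Theorem \ref{thm:C2_max_time}, which makes the linearization $L_{g(t)}$ uniformly elliptic in space--time, and (ii) the concavity of
$$F(\na d \f, x, t) := \log \det\!\bigl(\hat g(t) + \na d \f\bigr) - \log \Omega^2(x)$$
in the Hessian entries, inherited from the concavity of $\log\det$ on the cone of positive-definite symmetric matrices. Together with the uniform $C^0$ and $C^1$ bounds of Lemmas \ref{lem:C0_max_time} and \ref{lem:bound_phi_dot_max_time}, these are exactly the hypotheses required by the classical regularity theory for concave, uniformly parabolic, fully nonlinear scalar equations.

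Concretely, I would cover $M$ by a finite affine atlas $\{(U_\alpha, x_\alpha)\}$; since transition maps lie in $\mathrm{Aff}(n)$ their second derivatives vanish, so estimates on $\na d \f$ and its higher derivatives transform in a controlled way between charts. In each chart the equation \eqref{PMA_max} is a uniformly parabolic, concave, fully nonlinear scalar equation of the form $\D_t \f = F(\na d \f, x, t)$, and the $C^2$ estimate of Theorem \ref{thm:C2_max_time} gives uniform control of the Hessian. The parabolic version of the Evans--Krylov theorem (e.g.\ Lieberman, \emph{Second Order Parabolic Differential Equations}, Ch.~14) then yields an $\alpha \in (0,1)$ and a constant $C > 0$ depending only on $g_0$ and $\Omega$ such that
$$\|\f(t)\|_{C^{2,\alpha}(M)} \le C, \qquad t \in [0, T_m).$$

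With uniform $C^{2,\alpha}$ control in hand, higher-order bounds follow by differentiation and linear Schauder bootstrap. For any affine coordinate $x^\ell$, the function $u := \D_\ell \f$ satisfies
$$\D_t u = g^{ij}(t,x)\, \D_i \D_j u + h_\ell(t,x),$$
where $g^{ij}$ and $h_\ell$ (the latter involves $\D_\ell \hat g$ and $\D_\ell \log \Omega^2$) lie in $C^\alpha$ uniformly in $t$. Parabolic Schauder estimates then give $u \in C^{2,\alpha}$ uniformly, hence $\f \in C^{3,\alpha}$. Iterating the differentiation--Schauder step yields $\|\f(t)\|_{C^{k,\alpha}(M)} \le C_k$ for every $k$. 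Short-time smooth regularity near $t=0$ (with smooth initial data $\f(0) = 0$) comes from the existence theorem of \cite{MM}, so the estimates extend to the closed interval $[0, T_m)$ with time-uniform constants.

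The only real difficulty, once the $C^0$--$C^2$ estimates are available, is the clean transplantation of the local Evans--Krylov result to the compact affine manifold; but this is routine, since the affine structure preserves the invariance properties used in the proof of the local theorem and the finite cover produces only harmless fixed constants. No new analytic obstacle appears beyond what is already overcome in Lemmas \ref{lem:C0_max_time}, \ref{lem:bound_phi_dot_max_time} and Theorem \ref{thm:C2_max_time}.
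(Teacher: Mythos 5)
Your argument is correct and follows essentially the same route as the paper: Evans--Krylov for the concave, uniformly parabolic scalar equation (using the uniform $C^0$, $\dot\f$, and $C^2$ bounds) to get uniform $C^{2,\alpha}$, then differentiation in affine coordinates and a parabolic Schauder bootstrap to reach all higher orders. The only cosmetic difference is that you record the lower-order term coming from $\D_\ell \hat g$ and $\D_\ell\log\Omega^2$ explicitly in the differentiated equation, which is in fact slightly more careful than the paper's statement of that step.
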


\begin{proof}
Since the Monge-Amp\`ere flow is a fully nonlinear parabolic equation with concave operator, the $C^{2,\alpha}$ estimate for $\f$ follows from  the Evans-Krylov theorem \cite{Kry} (see also \cite{Kry_book, Lie}).
   
Let $D$ be any first order differential operator with constant coefficients. Differentiating the \eqref{PMA_max}, we get
\begin{equation}\label{eq:D}
\left(\frac{\D}{\D t}-L_{g(t)} \right)D \f = D \log (\Omega^2).
\end{equation} 
Since $| \f|+|\dot \f |+| \na d\f |$ and $[\na d \f]_{\alpha}$ are under control, the $\|D \f\|_{C^0}$ is under control.   Applying the parabolic Schauder estimates \cite{Kry_book, Lie} we imply that   $\| Du\|_{C^{2,\alpha}}$ is under control. Applying  $D$ to \eqref{eq:D} we  obtain
\begin{eqnarray}
\left(\frac{\D}{\D t}-L_{g(t)} \right)D^2 \f &=& D^2 \log (\Omega^2) + \sum D g^{jk} \frac{\D ^2 Du}{\D x^j \D x^k },
\end{eqnarray}
where the parabolic $C^\alpha$ norm of the right hand is under control by the argument above. Using again the parabolic Schauder estimates, we obtain uniform bound for $D^2 u$. Iterating this procedure we complete the proof of Proposition \ref{prop:Ck_max_time}.
\end{proof}
\begin{proof}[Proof of Theorem \ref{thm:maximal_time}]
It follows from the Arzel\`a-Ascoli Theorem and Proposition \ref{prop:Ck_max_time} that  given any sequence $t_j\rightarrow T_m $, there exists a subsequence $t_{j_k}$ and a  smooth function $\f_{T_m}$ such that $\f_{t_{j_k}}\rightarrow \f_{T_m}$ in $C^{k}(M,g_0)$ for all $k\geq 0$.  It follows from Lemma \ref{lem:bound_phi_dot_max_time} $\sup_M|\dot \f| \leq C$  for all $t\in [0,T_m)$ for some uniform constant $C$, so $\f(t)-Ct$ is non-increasing in $t$. 
 Since $\f$ is uniformly bounded in $[0,T_m)$,  $\f(t)-Ct$  converges as $t\rightarrow T_m$, to a function which is necessarily equal $\f_{T_m}$. Therefore the limit $\f_{T_m} $ is unique and so $\f(t)\rightarrow \f_{T_m}$ in $C^k(M,g_0) $ for all $k\geq 0$. 
 
 Therefore the metric $g(t)=\hat g(t)+\na d\f$ converges smoothly to the tensor $g_{T_m}=\hat g_{T_m}+\na d \f_{T_m}$. Moreover it follows from Theorem \ref{thm:C2_max_time} that $g(t)\geq C g_0$ for all $t\in [0,T_m)$, so $g_{T_m}$ is positive definite, i.e a Riemannian metric. Therefore the flow can be extended to $t=T_m$, this is a contradiction to our assumption that $T_m$ is the maximal  time of the existence. This implies that  $T_m=T$ as required. 
\end{proof}

 \section{Hesse-Koszul flow and Hesse-Einstein metrics}
We consider the case when $c^a_1(M)<0$. Then by Theorem \ref{thm:maximal_time}, the Hesse-Koszul exists for all time. Since the class $[g(t)]= [g_0]- tc^a_1(M)$ becomes unbounded as $t\rightarrow \infty$,  we can not study the convergence of the flow. Therefore we need to rescale the flow in time by  $t=\log (s+1)$ where $s$ is the time variable for the original flow. Then we get the {\it normalized Hesse-Koszul flow}: 
\begin{equation}\label{eq:NHEF}
    \frac{\partial  g}{\partial t}=-\beta(g) -g, \quad g|_{t=0}= g_0.
\end{equation}
This flow also exists for all time and the   class $[g(t)]$ satisfies 
\begin{equation}
    \dfrac{d}{d t}[g(t)] = -c_1^a(M)-[g(t)],\quad \quad [g(0)] =[g_0]. 
\end{equation}
Therefore we have 
$$[g(t)] =e^{-t}[g_0] +  (1-e^{-t})[c_1^a(M)].$$

which yields the following theorem.
\begin{theorem}\label{thm:convergence_neg}
 Assume  that $c_1^a(M)<0$, then starting  from any Hessian metric $g_0$, the flow \eqref{eq:NHEF} exists for all time and converges in $C^\infty$ to a Hesse-Einstein metric $g_{\infty}$ satisfying
\begin{equation}\label{eq:HE_neg}
\beta(g_{\infty})=-g_{\infty}.
\end{equation} 
Moreover, $g_\infty$ is the unique solution to the Hesse-Einstein equation \eqref{eq:HE_neg}. 
\end{theorem}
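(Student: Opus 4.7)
The plan is to reduce the flow to a parabolic Monge-Amp\`ere equation and then adapt Cao's scheme for the K\"ahler-Ricci flow in the negative class case. Since $c_1^a(M)<0$, there exists a Hessian metric $\chi\in -c_1^a(M)$, and one may choose a volume form $\Omega$ with $\beta_\Omega=-\chi$. Setting $\hat g(t)=e^{-t}g_0+(1-e^{-t})\chi$, which is Hessian for every $t\ge 0$ as a convex combination of Hessian metrics, a computation parallel to Lemma~\ref{lem:reduction_max} shows that the normalized Hesse-Koszul flow is equivalent to
\begin{equation}\label{eq:plan_PMA}
\dot\f=\log\frac{\det(\hat g(t)+\na d\f)}{\Omega^2}-\f,\qquad \f(0)=0,
\end{equation}
via $g(t)=\hat g(t)+\na d\f(t)$. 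Long-time existence is inherited from Theorem~\ref{thm:maximal_time_0}: when $c_1^a(M)<0$, the positivity $[g_0]-sc_1^a(M)>0$ holds for every $s\ge 0$, so the unnormalized flow exists for all time, and the time rescaling $t=\log(s+1)$ transfers this to the normalized flow.

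Next I would derive uniform a priori estimates. A $C^0$ bound on $\f$ is immediate from the maximum principle: at an interior space-time maximum $\na d\f\le 0$ and $\dot\f\ge 0$, so \eqref{eq:plan_PMA} yields $\f\le\log(\det\hat g/\Omega^2)\le C$, with a symmetric lower bound, and the same argument applied to $\dot\f$ gives an upper bound on $\dot\f$. The crucial $C^2$ estimate, which I expect to be the main technical obstacle, uses Lemma~\ref{lem:evol_log_tr} applied to $H=\log\tr_{g_0}g-A\f$ with $A$ chosen large enough that $A\hat g-C_0 g_0>0$ as tensors (possible since $\hat g$ and $g_0$ are uniformly equivalent). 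At an interior maximum of $H$ one combines this with $(\partial_t-L_g)\f=\dot\f-n+\tr_g\hat g$ to extract $\epsilon\tr_g\hat g\le A(n-\dot\f)$; the AM-GM bound $\tr_g\hat g\ge n(\det\hat g/\det g)^{1/n}\ge c\,e^{-\dot\f/n}$ (using that $\f$ is bounded and $\det g=e^{\dot\f+\f}\Omega^2$) forces a lower bound on $\dot\f$ at that point, which in turn bounds $\tr_g\hat g$, and via Lemma~\ref{lem:ineq_volumes} together with $\det g\le C$ also $\tr_{g_0}g$. Hence $g(t)$ is uniformly equivalent to $g_0$; the lower bound on $\dot\f$ then follows globally from \eqref{eq:plan_PMA}, and higher $C^k$ bounds come from Evans-Krylov and parabolic Schauder estimates as in Proposition~\ref{prop:Ck_max_time}.

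The key ingredient for convergence is exponential decay of $\dot\f$. Differentiating \eqref{eq:plan_PMA} in $t$ gives
\begin{equation}
(\partial_t-L_g+1)\dot\f=\tr_g\dot{\hat g},\qquad \dot{\hat g}=e^{-t}(\chi-g_0),
\end{equation}
so by the $C^2$ bound $|\tr_g\dot{\hat g}|\le C_1 e^{-t}$. Fixing any $\lambda\in(0,1)$, for $B$ large enough the function $F_\pm=\pm\dot\f-Be^{-\lambda t}$ satisfies $(\partial_t-L_g+1)F_\pm\le (C_1-B(1-\lambda))e^{-\lambda t}\le 0$ and is nonpositive at $t=0$, so the maximum principle gives $|\dot\f(t)|\le Be^{-\lambda t}$. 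This makes $\{\f(t)\}$ Cauchy in $C^0$ since $|\f(t_2)-\f(t_1)|_{C^0}\le(B/\lambda)e^{-\lambda t_1}$, and combined with the $C^k$ bounds and Arzel\`a-Ascoli, $\f(t)\to\f_\infty$ in $C^\infty$. Passing to the limit in \eqref{eq:plan_PMA} gives $\log(\det(\chi+\na d\f_\infty)/\Omega^2)=\f_\infty$; applying $\na d$ and using $\beta_\Omega=-\chi$ yields $\beta(g_\infty)=-\chi-(g_\infty-\chi)=-g_\infty$, the required Hesse-Einstein equation.

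Uniqueness is a direct maximum-principle argument: if $g_i=\chi+\na d\f_i$ ($i=1,2$) both solve $\beta(g_i)=-g_i$, one may normalize so that $\log(\det g_i/\Omega^2)=\f_i$. At a maximum of $\f_1-\f_2$, $\na d(\f_1-\f_2)\le 0$ gives $g_1\le g_2$, so $\det g_1\le\det g_2$ and hence $\f_1\le\f_2$ at that point; thus $\f_1\le\f_2$ globally, and symmetry gives $\f_1\equiv\f_2$. The hardest step throughout is the $C^2$ estimate, carried out with only the $C^0$ bound on $\f$ and the upper bound on $\dot\f$ available; the AM-GM trade-off between $\tr_g\hat g$ and $\dot\f$ is the ingredient needed to close the argument in the normalized setting.
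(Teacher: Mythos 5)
Your overall architecture is the same as the paper's: reduce the normalized flow to the scalar parabolic Monge-Amp\`ere equation with reference metrics $\tilde g(t)=e^{-t}g_0+(1-e^{-t})\chi$ for a fixed $\chi\in-c_1^a(M)$, run maximum-principle estimates, invoke Evans--Krylov and Schauder, prove decay of $\dot\f$, and get uniqueness by the maximum principle; your trace test function, your barrier argument for exponential decay, and your uniqueness argument are fine in themselves. The genuine gap lies in the zeroth-order bounds on $\dot\f$, and it propagates. The claimed upper bound on $\dot\f$ ``by the same argument'' does not follow: differentiating the flow gives $(\D_t-L_g)\dot\f=e^{-t}\tr_g(\chi-g_0)-\dot\f$, so at an interior space-time maximum you only obtain $\dot\f\le e^{-t}\tr_g(\chi-g_0)$, and $\tr_g\chi$ is not under control at that stage. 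This is precisely why the paper proves $|\dot\f|\le C(1+t)e^{-t}$ \emph{before} any second-order estimate, via the test functions $H=(e^t-1)\dot\f-\f-nt$ and $G=(e^t+B)\dot\f+B\f+nBt$ with $B\chi\ge g_0$, for which $(\D_t-L_g)H=-\tr_g g_0<0$ and $(\D_t-L_g)G=\tr_g(\chi-g_0+B\chi)>0$.

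More seriously, your lower bound on $\dot\f$ is circular. Your $C^2$ argument yields only the one-sided bound $\tr_{g_0}g\le C$, i.e.\ $g\le Cg_0$ (incidentally, at the maximum of $\log\tr_{g_0}g-A\f$ the inequality $\epsilon\,\tr_g\hat g\le A(n-\dot\f)$ already forces $\dot\f<n$ there, so the problematic global upper bound is not even needed at that point). The lower half $g\ge C^{-1}g_0$ requires, via Lemma \ref{lem:ineq_volumes}, a uniform lower bound on $\det g=e^{\dot\f+\f}\Omega^2$, hence a global lower bound on $\dot\f$; but you propose to deduce that lower bound from the equation \emph{after} asserting uniform equivalence, which is exactly what it is needed for. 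Without it, uniform ellipticity (hence Evans--Krylov), the bound $|\tr_g\dot{\tilde g}|\le C_1e^{-t}$ in your decay step (which needs $\tr_g g_0\le C$, i.e.\ $g\ge c\,g_0$), and the positive-definiteness of the limit $g_\infty$ are all unjustified. The repair is the paper's ordering: establish the two-sided, exponentially decaying bound on $\dot\f$ first by the maximum principle applied to $H$ and $G$ above, and only then run the trace estimate; with that correction your proof closes and coincides in substance with the paper's.
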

The existence and uniqueness of a solution to \eqref{eq:HE_neg} was proved by Cheng-Yau \cite{CY} and   Caffarelli-Viaclovsky \cite{CV}.  Our result give another proof of this result using the parabolic approach.

\medskip
Since $c^a_1(M)<0$, there exists a Hessian metric $\hat g$ such that  its affine K\"ahler form  $\hat g\in -c^a_1(M)$. Fix a volume form $\Omega$ such that  locally $\Omega= \Omega(x)dx^1\wedge \ldots\wedge dx^n$ with
$$ \dfrac{\D^2}{\D x^i \D x^j} \log \Omega^2 (x)= \hat g_{ij}(x) .$$
By the same argument in Lemma \ref{lem:reduction_max}, we can rewrite the normalizied Hesse-Enstein flow as the parabolic Monge-Amp\`ere equation
\begin{equation}
    \dfrac{\D  }{\D t} \f =\log \dfrac{\det (\tilde g + \na d \f) }{ \Omega^2  } - \f, \quad \tilde g + \na d \f>0, \quad \f|_{t=0}=0. 
\end{equation}
where $\tilde g(t,x) = e^{-t}g_0(x)+ (1-e^t)\hat g (x)$. 

We now establish a priori estimates for $\f$ which are independent of $t$. We follow the same strategy  as for the K\"ahler-Ricci flow.  

\begin{lemma}\label{lem:bound Co}
    There exits a uniform constant $C>0$ such that on $[0,\infty)\times M$
    \begin{enumerate}
        \item $|\f(t)|\leq C$.
        \item $| \dot\f(t)|\leq C(t+1)e^{-t}$
        \item There exists a  function $\f_\infty\in C^0(M)$ such that 
        $$|\f(t) - \f_\infty| \leq Ce^{-t/2}.$$
        \item $C^{-1}\Omega \leq  \sqrt{\det g(t)} \leq C \Omega$.
    \end{enumerate}
\end{lemma}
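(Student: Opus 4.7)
The plan is to establish the four estimates in the stated order, with (3) and (4) following routinely from (1) and (2).

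For (1), I would apply the maximum principle directly to $\f$. At an interior maximum $(t_0, x_0)$ of $\f$ on $[0,s]\times M$ with $t_0>0$, we have $\dot\f\geq 0$ and $\na d\f\leq 0$, so $\tilde g(t_0) + \na d\f\leq\tilde g(t_0)$, and the parabolic Monge-Amp\`ere equation yields
\[
0\leq\dot\f = \log\frac{\det(\tilde g + \na d\f)}{\Omega^2} - \f \leq \log\frac{\det\tilde g}{\Omega^2} - \f.
\]
Since $\tilde g(t) = e^{-t}g_0 + (1-e^{-t})\hat g$ stays uniformly comparable to $g_0+\hat g$ for all $t\geq 0$, the first term is bounded, giving $\f\leq C$. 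The lower bound is symmetric, applied to $-\f$.

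For (2), differentiating the PDE in $t$ and using $\D_t\tilde g = e^{-t}(\hat g - g_0)$ gives the evolution
\[
\left(\frac{\D}{\D t} - L_g\right)\dot\f = -\dot\f + e^{-t}\tr_g(\hat g - g_0).
\]
The coupling term is not signed, so the maximum principle cannot be applied to $\dot\f$ alone. Mimicking the standard strategy for the normalized K\"ahler-Ricci flow with $c_1<0$ (cf.\ \cite{Cao}) and in the spirit of Lemma \ref{lem:bound_phi_dot_max_time}, I would instead apply the maximum principle to a carefully chosen combination
\[
Q(t, x) = a(t)\,\dot\f + b(t)\,\f + c(t),
\]
with $a(t)\sim e^t$, $b(t)$ constant, and $c(t)$ containing a linear-in-$t$ term. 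Using the identity $L_g\f = n - \tr_g\tilde g$ and the explicit form $\tilde g = e^{-t}g_0 + (1-e^{-t})\hat g$ to absorb the $\tr_g\hat g$ and $\tr_g g_0$ contributions that arise in $(\D_t - L_g)Q$, one can arrange for $Q$ to be uniformly bounded from above. Unwinding then yields $\dot\f(t)\leq C(t+1)e^{-t}$, with the $(t+1)$ factor descending from the linear term in $c(t)$; the lower bound on $\dot\f$ follows by running the same argument on $-\dot\f$.

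Once (2) is in hand, (3) and (4) are immediate. Since $|\dot\f(t)|\leq C(t+1)e^{-t}$ is integrable on $[0,\infty)$, we have $|\f(t_2)-\f(t_1)|\leq\int_{t_1}^{t_2}C(s+1)e^{-s}\,ds\to 0$ as $t_1,t_2\to\infty$, giving $C^0$-convergence of $\f(t)$ to some $\f_\infty$; the tail bound $\int_t^\infty(s+1)e^{-s}\,ds = (t+2)e^{-t}\leq Ce^{-t/2}$ produces the claimed rate. For (4), the PDE gives $\log\det g(t)/\Omega^2 = \dot\f+\f$, bounded by (1) and (2), so $C^{-1}\Omega\leq\sqrt{\det g(t)}\leq C\Omega$. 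The main obstacle is step (2): the unsigned coupling $e^{-t}\tr_g(\hat g-g_0)$ admits no pointwise bound without a priori $C^2$ control on $g(t)$, and cancelling it via $\tr_g\tilde g = n - L_g\f$ requires the correct choice of time weights in $Q$; pinning down those weights—so that the maximum principle yields a sharp exponential decay with the $(t+1)$ factor—is the delicate computational part of the argument.
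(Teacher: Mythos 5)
Your proposal follows the paper's argument essentially verbatim: (1), (3) and (4) are handled exactly as you describe, and for (2) the paper applies the maximum principle to precisely the kind of combination you propose, namely $H=(e^t-1)\dot\f-\f-nt$, for which the identities you list give $(\frac{\D}{\D t}-L_g)H=-\tr_g g_0<0$ and hence $H\leq 0$ and the upper bound $\dot\f\leq C(t+1)e^{-t}$. The only imprecision is your remark that the lower bound follows by ``running the same argument on $-\dot\f$'': it actually requires the modified combination $G=(e^t+B)\dot\f+B\f+nBt$ with the constant $B$ chosen so that $B\hat g\geq g_0$, ensuring $(\frac{\D}{\D t}-L_g)G=\tr_g(\hat g-g_0+B\hat g)>0$ and $G\geq 0$ --- a sign adjustment that still fits within your stated ansatz.
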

\begin{proof}
The first estimate is derived straightforwardly from the maximum principle applied to $\f$. For (2), we use the maximal principle for the function $H=(e^t -1)\dot \f -\f -nt$.
Taking the derivative in time on the two sides of the flow, we get
\begin{equation}
    \dfrac{\D }{\D t} \dot \f=L_{g(t)} \dot \f + {\rm Tr}_{g(t)} (-e^{-t}g_0+e^{-t}\hat g ) - \dot \f,
\end{equation}
where $L_g$ is the elliptic operator $\tr_g \na d $.
In addition, we have
\begin{equation}
    \left(  \dfrac{\D }{\D t}-L_{g(t)}\right) \f =  \dot \f -{\rm Tr}_{g(t)}(g(t)-\tilde g(t))
    = \dot \f -n + {\rm Tr}_{g(t)}(e^{-t}g_0 +(1-e^{-t}) \hat g).
\end{equation}
Therefore it comes
\begin{equation}
    \left(  \dfrac{\D }{\D t}-L_{g(t)}\right)(e^t \dot \f ) = {\rm Tr} _{g(t)} (-g_0+\hat g)
\end{equation}
and 
\begin{equation}
     \left(  \dfrac{\D }{\D t}-L_{g(t)}\right)(\dot \f +\f +nt )= {\rm Tr}_{g(t)} \hat g.
\end{equation}
Combining all above, we get
\begin{equation*}
     \left(  \dfrac{\D }{\D t}-L_{g(t)}\right) H =-{\rm Tr}_{g(t)}g_0<0.
\end{equation*}
Then the maximum principle implies that $H=(e^t -1)\dot \f -\f -nt \leq 0$.  Since $\f$ is bounded by (1), we get $\dot \f\leq C(t+1)e^{-t}$.  

\medskip
For the lower bound of $\dot \f$ we apply the maximum principle to  
\[
G= (e^t + B)\dot \f+ B\f +nBt
\]
where $B$ satisfies $B\hat g\geq g_0$.  By a direct computation we get
\begin{equation*}
     \left(  \dfrac{\D }{\D t}-L_{g(t)}\right) G = {\rm Tr}_{g(t)} (-g_0+\hat g+ B \hat g)>0.
\end{equation*} 

The maximum principle thus implies that $G\geq 0$, hence  $\dot \f\geq -C(t+1)e^{-t}$. 
For (3),  taking $s>t$ and $x\in M$, and using (2), we get
\begin{eqnarray}
|\f(s,x)-\f(t,x)|&=&|\int_t^s\dot \f(\tau,x)d\tau|\\
&\leq& C\int_t^s (1+\tau)e^{-\tau} \leq C\int_t^se^{-\tau/2}d\tau =2C(e^{-t/2}-e^{-s/2}).
\end{eqnarray}
Therefore $\f(t)$ converges uniformly to a continuous function $\f_\infty$. Letting $s\rightarrow \infty$ yields (3). 

Finally, we use (1) and (2) for 
$$\log\dfrac{\det[g(t)]}{\Omega^2} =\dot \f +\f$$ 
to get (4). 
\end{proof}

\begin{proposition}\label{lem:trace}
    There exists a constant $C$ such that 
    $$C^{-1}g_0\leq g(t)\leq C g_0, \quad \text{on}\quad [0,\infty)\times M.$$
\end{proposition}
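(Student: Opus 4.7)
The plan is to adapt the Aubin--Yau trace estimate to the normalized flow, using $\hat g$ rather than $g_0$ as the reference metric (since $\hat g$ governs the long-time behavior of $\tilde g(t)$) and forming the auxiliary function $H = \log \tr_{\hat g} g - A\f$ for a suitable constant $A$. I will split time into two regimes: on a short initial interval $[0, t_0]$, the bound will follow from Theorem \ref{thm:C2_max_time}, since the normalized flow on any finite interval is merely a time-reparametrization of the original Hesse--Koszul flow and therefore inherits its finite-time $C^2$ estimates; on $[t_0, \infty)$, I will apply the maximum principle to $H$.

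First I need the analog of Lemma \ref{lem:evol_log_tr} with $\hat g$ as reference and the additional $-g$ contribution from normalization taken into account. Repeating the computation of that lemma verbatim with $\hat g$ in place of $g_0$, and observing that the extra $-g_{ij}$ in $\partial_t g_{ij}$ contributes $-\tr_{\hat g} g$ to $\partial_t \tr_{\hat g} g$ and hence $-1$ to the logarithmic derivative, I obtain
\[
\left(\frac{\partial}{\partial t} - L_g\right) \log \tr_{\hat g} g \leq C_1 \tr_g \hat g - 1,
\]
where $C_1$ depends only on $\hat g$. Next, from $L_g \f = \tr_g(\nabla d\f) = \tr_g(g - \tilde g) = n - \tr_g \tilde g$ and the scalar equation,
\[
\left(\frac{\partial}{\partial t} - L_g\right) \f = \dot \f - n + e^{-t} \tr_g g_0 + (1 - e^{-t}) \tr_g \hat g.
\]
Combining the two, and using the uniform bounds on $\f$ and $\dot\f$ from Lemma \ref{lem:bound Co}(1)--(2),
\[
\left(\frac{\partial}{\partial t} - L_g\right) H \leq \bigl[C_1 - A(1 - e^{-t})\bigr] \tr_g \hat g - A e^{-t} \tr_g g_0 + An + C.
\]

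Choosing $t_0 = \log 2$ and $A = 2(C_1 + 1)$, the bracket is $\leq -1$ on $[t_0, \infty)$, so $(\partial_t - L_g) H \leq -\tr_g \hat g + C_2$. At an interior maximum of $H$ on $[t_0, T] \times M$ the left-hand side is nonnegative, giving $\tr_g \hat g \leq C_2$; combining with the uniform comparison $\det g \asymp \det \hat g$ on $[0,\infty) \times M$ (from Lemma \ref{lem:bound Co}(4) and the compactness of $M$) and Lemma \ref{lem:ineq_volumes}, I then also bound $\tr_{\hat g} g$, and hence $H$, at that point. If the maximum is attained at $t = t_0$, Theorem \ref{thm:C2_max_time} applied on the finite time interval $[0, t_0]$ (via the reparametrization) bounds $H(t_0, \cdot)$ uniformly. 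Therefore $H$ is uniformly bounded above on $[0,\infty) \times M$, and together with $|\f| \leq C$ this yields $\tr_{\hat g} g \leq C_3$. Since $\hat g$ and $g_0$ are equivalent as smooth Riemannian metrics on the compact manifold $M$, this gives the upper bound $g \leq C_4 g_0$. The lower bound $g \geq C_4^{-1} g_0$ then follows from Lemma \ref{lem:ineq_volumes}, using once more the uniform determinant comparison.

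The principal obstacle is the degeneration of the factor $(1 - e^{-t})$ as $t \to 0$: near $t=0$, $\tilde g(t)$ is close to $g_0$ rather than $\hat g$, so the coefficient in front of $\tr_g \hat g$ cannot be made sufficiently negative, and a single maximum-principle argument on $H$ over $[0, \infty) \times M$ fails. Rather than building an elaborate time-dependent auxiliary function (e.g.\ one involving $e^{-t}$ weights or a second reference potential), I resolve this by the simple time split described above, using the short-time $C^2$ estimates available from Theorem \ref{thm:C2_max_time} on $[0, t_0]$ to reduce everything to the regime $t \geq t_0$ where the coefficient $1 - e^{-t}$ is bounded below by a fixed positive constant.
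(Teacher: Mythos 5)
Your proof is correct and follows essentially the same strategy as the paper: the maximum principle applied to $\log \tr_{\text{ref}} g - (\mathrm{const})\,\f$, the evolution inequality from Lemma \ref{lem:evol_log_tr} with the extra $-1$ from normalization, the uniform bounds on $\f$, $\dot\f$ and $\det g$, and Lemma \ref{lem:ineq_volumes} to convert between the two traces (the paper uses $g_0$ as reference, you use the equivalent metric $\hat g$). The only genuine difference is your time-split near $t=0$, which is unnecessary: since $g_0$ and $\hat g$ are uniformly comparable on the compact $M$, one has $e^{-t}\tr_g g_0 + (1-e^{-t})\tr_g \hat g = \tr_g \tilde g \geq c\,\tr_g \hat g$ for a uniform $c>0$, so choosing $A$ with $Ac > C_1+1$ makes the single maximum-principle argument on $[0,\infty)\times M$ go through (as in the paper), and the appeal to Theorem \ref{thm:C2_max_time} on $[0,t_0]$ can be dropped.
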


\begin{proof} It follows from Lemma \ref{lem:bound Co}-(3) that there exists a uniform constant $C$ such that
\begin{equation}\label{eq_det_1}
C^{-1}\det g_0\leq \det g\leq C\det g_0.
\end{equation}  Then
using  Lemma \ref{lem:ineq_volumes} it suffices to derive a uniform upper bound for $\tr _{g_0}g$.  

We now  apply the maximum principle to 
 $K= \log \tr _{g_0} g -B\f$, where $B$ is chosen hereafter.  It follows from the proof of  Lemma \ref{lem:evol_log_tr} that 
\begin{equation}
     \left(  \dfrac{\D }{\D t}-L_{g}\right) \log\tr_{g_0}g \leq C_0 \tr_{g}g_0-1 ,
\end{equation}
where $C_0$  only depends  on $g_0$. Please note that here the constant $1$ appears in the right-hand side because of the normalization of the flow. Therefore 
\begin{eqnarray}
 \left(  \dfrac{\D }{\D t}-L_{g}\right)K &\leq& C_0\tr_{g}g_0-1 - B \dot \f +B\tr_{g} (g-g_0)\\
 &= & (C_0-B)\tr_{g}g_0  -B\dot \f + B n-1. 
\end{eqnarray}
Choosing $B=C_0+1$, we have 
\begin{eqnarray}
 \left(  \dfrac{\D }{\D t}-L_{g}\right)K \leq -\tr_{g}g_0 -(C_0+1)\dot \f + (C_0+1) n-1
\end{eqnarray}

Suppose that $K$ admits a maximum at $(t_0,x_0)$ with $t_0>0$, then  at this point
$$- \tr_{g}g_0  -( C_0+1) \dot \f + (C_0+1) n-1 \geq 0 .$$ Since  $\dot \f$ is uniformly bounded by Lemma \ref{lem:bound Co},  we have $ \tr_{g}g_0 \leq C_1$ at $(t_0,x_0)$,   for some uniform constant $C_1>0$. Using Lemma \ref{lem:ineq_volumes} and \eqref{eq_det_1}, we get $\tr_{g_0}g (t_0,x_0)\leq C_2$. By our assumption;  $K\leq K(t_0,x_0)$ hence
\begin{eqnarray}
    \log \tr_{g_0} g(t,x)&\leq& \log \tr_{g_0}g(t_0,x_0)- (C_0+1)(\f(t,x)-\f(t_0,x))\\
    &\leq & C_2 - (C_0+1)(\f(t,x)-\f(t_0,x)).
\end{eqnarray}

Since $\f$ is uniformly bounded by Lemma \ref{lem:bound Co}, this implies that  
 \begin{equation}
     \tr_{g_0} g\leq  C,
 \end{equation}
as required.
\end{proof}
We now can use the Evans-Krylov and Schauder estimates as  Proposition \ref{prop:Ck_max_time} to get $C^k$ estimates for all $k\geq 0$
\begin{proposition}\label{pop:Ck}
For any $k\in \N$, there exists a uniform constant $C_k>0$ such that on $[0,\infty)$ 
$$\|\f\|_{C^k(M)}\leq C_k. $$
\end{proposition}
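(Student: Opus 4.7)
The plan is to repeat verbatim the argument used in Proposition \ref{prop:Ck_max_time}, now in the setting of the normalized flow. First, combining Lemma \ref{lem:bound Co} with Proposition \ref{lem:trace}, we already have uniform control on $|\f|$, $|\dot\f|$ and the full two-sided bound $C^{-1} g_0 \leq g(t) \leq C g_0$ on $[0,\infty)\times M$. The latter says that the coefficients $g^{ij}(t)$ of the linearization $L_{g(t)}$ are uniformly elliptic, independent of $t$, so our parabolic Monge-Amp\`ere equation is uniformly parabolic for all time.

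The second step is the parabolic Evans-Krylov theorem. The nonlinear operator $\f \mapsto \log\det(\tilde g + \na d\f)$ is concave on the cone of positive definite symmetric matrices; combined with uniform parabolicity this gives a uniform $C^{2,\alpha}$ estimate
\[
\|\f\|_{C^{2,\alpha}([0,\infty)\times M)} \leq C,
\]
for some $\alpha \in (0,1)$; see \cite{Kry, Kry_book, Lie}. The only difference with the maximal-time equation of Section 3 is the lower-order term $-\f$ on the right-hand side, which is harmless since $\f$ is already uniformly bounded.

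Third, let $D$ be any first order differential operator with constant coefficients in the affine coordinates. Differentiating the flow equation gives the linear parabolic equation
\[
\left(\frac{\D}{\D t}-L_{g(t)}\right)D\f = D\log(\Omega^2) - D\f,
\]
whose coefficients are in $C^{\alpha}$ thanks to the $C^{2,\alpha}$ bound on $\f$, and whose right-hand side is bounded in $C^\alpha$. Parabolic Schauder estimates then produce $\|D\f\|_{C^{2,\alpha}}\leq C$, i.e.\ $\|\f\|_{C^{3,\alpha}}\leq C$. Applying another $D$ yields
\[
\left(\frac{\D}{\D t}-L_{g(t)}\right)D^2\f = D^2\log(\Omega^2) - D^2\f + \sum Dg^{jk}\,\frac{\D^2 D\f}{\D x^j \D x^k},
\]
whose right-hand side is now controlled in $C^\alpha$. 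Iterating this differentiation--Schauder loop produces the uniform $C^k$ estimate for every $k \in \N$.

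There is no genuine obstacle here: the whole argument is an induction on $k$, with Evans-Krylov providing the base case and parabolic Schauder the inductive step; the extra zeroth-order term $-\f$ (and its derivatives $-D^m\f$ after $m$ differentiations) is always absorbed by the bound obtained at the previous stage. The uniform parabolicity delivered by Proposition \ref{lem:trace} is the one ingredient that is not automatic, but it has already been established.
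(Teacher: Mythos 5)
Your proposal is correct and is essentially the paper's own argument: the paper proves this proposition by invoking the Evans--Krylov and parabolic Schauder bootstrap of Proposition \ref{prop:Ck_max_time}, using exactly the uniform parabolicity from Proposition \ref{lem:trace} and the bounds of Lemma \ref{lem:bound Co}, and your treatment of the extra zeroth-order term $-\f$ is the only adaptation needed. The only cosmetic omission is the bounded term $g^{ij}D\tilde g_{ij}$ coming from the $t$-dependence of the reference metric when you differentiate, which is uniformly controlled (as in the paper's own computation) and does not affect the argument.
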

\begin{proof}[Proof of Theorem \ref{thm:convergence_neg}] Now we can complete the proof of Theorem \ref{thm:convergence_neg}. It follows from Lemma \ref{lem:bound Co} that $\f (t)$ converges uniformly exponentially fast to $\f_\infty$. Since we have all uniform $C^k$ estimates by Proposition \ref{pop:Ck}, we imply that $\f(t)$ converges to $\f_\infty$ in $C^\infty$,  so $\f_\infty$ is smooth. 

In addition, we have $|\dot \f|\leq C(1+t)e^{-t}$ (cf. Lemma \ref{lem:bound Co}), hence $\dot \f$ converge to $0$ in $C^\infty$. It follows that 
$$\log\dfrac{\det(g_\infty)}{\Omega^2}-\f_\infty =0.$$
Taking $\D_i \D_j$ both two sides
gives $\beta_{ij}(g_\infty)= -(\beta_\Omega)_{ij} - (\f_\infty)_{ij} = -(g_\infty)_ {ij}$, hence $g_\infty$ satisfies
\begin{equation}\label{eq:HE_proof}
    \beta(g_{\infty})=g_\infty.
\end{equation}
Finally  the uniqueness follows from the maximum principle. Indeed, suppose $g_1$  and $g_2$ are two Hesse-Einstein metric in $-c_1^{a}(M)$. Then $g_2=g_1+ \na d \phi $ for some  function $\phi\in C^\infty(M$, so $\beta(g_2)=-g_2=\beta(g_1) -\na d\phi$. Therefore we have
\begin{equation}
\na d \log \frac{\det (g_1+\na d \phi)}{\det g_1}= \na d\phi, 
\end{equation}  
so 
\begin{equation}
 \log \frac{\det (g_1+\na d \phi)}{\det g_1}=\phi+C.
\end{equation}
By considering this equality at the maximum $x_0$  of $\phi+C$ we have 
\begin{equation}
 \phi+C =  \log \frac{\det (g_1+\na d \phi)}{\det g_1}\leq 0,
\end{equation}
so $\phi +C\leq 0$. Similarly   we have $\phi+C\geq 0$ so $\phi+C\equiv 0$, hence $g_1=g_2$.
\end{proof} 

\section{The  Hesse-Koszul flow  and real Calabi theorem}

In this section we study the Hesse-Koszul flow twisted by  $\eta \in c_1^a(M)$

\begin{equation}\label{eq:THKF}
     \frac{\partial  g}{\partial t}=-\beta(g) + \eta, \quad g|_{t=0}=g_0.
\end{equation}
Our goal is prove the following convergence result:

\begin{theorem}\label{thm:conv_calabi}
The flow  \eqref{eq:THKF} exists for all time and converges in $C^\infty$ to a metric $g_\infty $ satisfying 
\begin{equation}\label{eq:Calabi}
    \beta(g_\infty)=\eta
\end{equation}
Moreover, $g_\infty$ is the unique solution to the  equation \eqref{eq:Calabi}.
\end{theorem}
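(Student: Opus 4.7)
The plan is to adapt Cao's approach to the K\"ahler-Ricci flow in the Calabi-Yau setting, translating the estimates to the Hessian world. Since $\eta \in c_1^a(M)$, pick a volume form $\Omega$ with $\beta_\Omega = \eta$. Writing $g(t) = g_0 + \na d\f(t)$, the twisted flow \eqref{eq:THKF} is equivalent, exactly as in Lemma \ref{lem:reduction_max}, to the parabolic Monge-Amp\`ere equation
\begin{equation*}
\dot\f = \log \frac{\det(g_0 + \na d\f)}{\Omega^2}, \quad g_0 + \na d\f > 0, \quad \f|_{t=0}=0.
\end{equation*}
The cohomology class $[g(t)] \equiv [g_0]$ is preserved along the flow, so long-time existence is provided by \cite{MM}.

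The bulk of the work is establishing uniform \emph{a priori} estimates on $[0,\infty)$. Differentiating the equation in time yields the linear heat-type equation $\D_t \dot\f = L_g \dot\f$; the maximum principle immediately gives $|\dot\f(t)| \leq \sup_M |\log(\det g_0/\Omega^2)|$, and in particular $C^{-1}\Omega^2 \leq \det g(t) \leq C\,\Omega^2$ uniformly in $t$. For the $C^0$ bound on the normalized potential $\tilde\f(t) := \f(t) - V^{-1}\int_M \f(t)\, dV_{g_0}$, I would invoke an $L^\infty$ estimate for real Monge-Amp\`ere equations in the spirit of \cite{PT}: the two-sided bound on $\det g/\Omega^2$ forces $\|\tilde\f(t)\|_{L^\infty} \leq C$ independently of $t$. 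The $C^2$ estimate then follows from Lemma \ref{lem:evol_log_tr} by applying the maximum principle to $K = \log \tr_{g_0}g - B\tilde\f$ with $B$ large enough, and combining with Lemma \ref{lem:ineq_volumes} yields $C^{-1}g_0 \leq g(t) \leq C g_0$. Evans-Krylov together with parabolic Schauder estimates, exactly as in Proposition \ref{prop:Ck_max_time}, then give uniform $C^k$ bounds for every $k$.

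With these bounds in hand, convergence proceeds along a standard route. Since $\dot\f$ satisfies the heat-type equation $\D_t\dot\f = L_g\dot\f$ on a family of metrics with uniformly bounded geometry, $\dot\f(t)$ converges exponentially in $C^\infty$ to a constant $c_\infty$; this can be obtained via monotonicity of $\int_M(\dot\f-\bar{\dot\f})^2 \sqrt{\det g}\, dx$ combined with a uniform Poincar\'e inequality, or by a parabolic Harnack estimate for $L_g$. After subtracting $c_\infty\, t$ from $\f$ if necessary, $\f(t)$ converges in $C^\infty$ to a smooth $\f_\infty$, and $g_\infty := g_0 + \na d\f_\infty$ satisfies $\log(\det g_\infty/\Omega^2) \equiv$ const, which is exactly $\beta(g_\infty) = \beta_\Omega = \eta$. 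Uniqueness is settled by the maximum principle as in the proof of Theorem \ref{thm:convergence_neg}: if $g_1, g_2$ are two solutions and $g_2 = g_1 + \na d\phi$, then $\na d\log(\det g_2/\det g_1) = 0$, so that log is a constant, and evaluating at the max and min of $\phi$ forces $\phi$ to be constant, hence $g_1 = g_2$.

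The main obstacle is clearly the uniform $C^0$ estimate on $\tilde\f$. In the K\"ahler setting one has Yau's Moser iteration or Ko\l odziej's pluripotential estimates, neither of which transfers directly to compact affine manifolds; the \cite{PT} approach, presumably an $L^\infty$ stability estimate for real Monge-Amp\`ere equations obtained by comparison with an auxiliary Monge-Amp\`ere problem, is what replaces those tools here.
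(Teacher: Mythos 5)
Your outline coincides step for step with the paper's own proof (reduction to the scalar parabolic Monge--Amp\`ere equation, the maximum-principle bound on $\dot\f$ and hence on $\det g/\det g_0$, the trace estimate via $\log\tr_{g_0}g-B\tilde\f$ together with Lemma \ref{lem:ineq_volumes}, Evans--Krylov and Schauder, then Cao-style convergence through a Harnack inequality for $\dot\f$). The one place where the proposal is not a proof is precisely the step you yourself flag as the main obstacle: the uniform bound on $\tilde\f$. You assert that the two-sided bound on $\det g/\det g_0$ ``forces'' $\|\tilde\f\|_{L^\infty}\le C$ ``in the spirit of \cite{PT}'', but \cite{PT} concerns fully non-linear parabolic equations on compact Hermitian manifolds and does not apply off the shelf in the affine setting, and your guess at its mechanism (an auxiliary Monge--Amp\`ere comparison problem) is not what is actually used. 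The paper fills this gap concretely: (i) the family $\{u\in C^\infty(M):\, g_0+\na d u>0,\ \sup_M u=0\}$ is relatively compact in $L^1$ (H\"ormander \cite{Hor}), which gives both the upper bound on $\tilde\f$ and a uniform $L^1$ bound on $\tilde\f-\sup_M\tilde\f$; (ii) for the lower bound one localizes at a minimum point in an affine chart and applies Tso's parabolic ABP maximum principle \cite{Tso} to $\phi=\tilde\f+\tfrac{\delta^2}{4}|x|^2+(t-t_0)^2$, using the bound on $\dot\f$ to control $\det(\phi_{ij})$ on the contact set (there $0\le(\phi_{ij})\le (g_{ij})$ once $\delta$ is small relative to $g_0$), which converts the $L^1$ bound into the $L^\infty$ bound. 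The paper also gives an elliptic variant \`a la Blocki and Sz\'ekelyhidi \cite{Bl,Sze} (Theorem \ref{thm:C0_estimate}), which is exactly the statement behind your sentence that the determinant bound forces the $C^0$ bound. Without some such argument the chain of estimates does not close, since every subsequent step ($C^2$, higher order, convergence) consumes the uniform bound on $\tilde\f$; so this is a genuine gap, not a routine citation.

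Two smaller points. In the uniqueness argument, evaluating $\log\frac{\det(g_1+\na d\phi)}{\det g_1}\equiv c$ at the maximum and minimum of $\phi$ only yields $c=0$; constancy of $\phi$ then needs one more step, e.g.\ writing $0=a^{ij}\phi_{ij}$ with $a^{ij}=\int_0^1(g_1+s\na d\phi)^{ij}\,ds>0$ and invoking the strong maximum principle. Also, your first suggested route to the exponential decay of $\dot\f$ (decay of a weighted $L^2$ norm plus a Poincar\'e inequality) is delicate here because $L_g=\tr_g\na d$ is a drift operator for the evolving metric and is not self-adjoint with respect to the natural volumes; the Harnack route (Li--Yau, as in Cao) that you mention as an alternative is the one the paper carries out, giving ${\rm osc}_M\dot\f\le Ce^{-\alpha t}$ and then monotonicity and convergence of $\tilde\f+\tfrac{C}{\alpha}e^{-\alpha t}$.
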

 In \cite{MM}, the author proved  the long time existence by proving  a priori estimates possible depending on time. In this section we derive uniform a priori estimates that allows us to prove the smooth convergence in Theorem \ref{thm:conv_calabi}. 
 
 \subsection{Reduction to parabolic Monge-Amp\`ere equation}
 Since $\eta$ and $\beta(g_0)$ belong to the first affine Chern class, there exists $f \in C^\infty(M) $ such that 
 \begin{equation*}
     \eta= \beta(g_0)+ \na d f. 
 \end{equation*}
 Therefore if we let
 $$g=g_0+\na d \f,$$
 where $\f \in C^{\infty}([0,\infty)\times M)$ and $\f(0,\cdot)=0$, 
 then flow becomes
\begin{eqnarray}
 \na d \left(\dfrac{\D }{\D t} \f \right)&= &-\beta(g) +\eta \\ 
&= & -\beta(g)+\beta(g_0) + \na d f\\
&= & \na d \log\frac{\det (g)}{\det (g_0)} +\na d f \\
&=&\na d \left(  \log\frac{\det (g_0+\na d \f)}{\det (g_0)}) +f \right).
\end{eqnarray}
The maximum principle on compact manifolds implies that solving \eqref{eq:THKF} is equivalent to solving the parabolic Monge-Amp\`ere equation 
\begin{equation}\label{PMA_calabi}
    \dfrac{\D }{\D t} \f  =  \log\frac{\det (g_0+\na d \f)}{\det (g_0)} +f, \quad  g_0+\na d \f>0, \quad \f|_{t=0}=0.  
\end{equation} 
 We start with the following observation:
 
\begin{lemma}\label{lem:bound_dot_phi}
 We have

\begin{enumerate}
    \item  There exists a uniform constant $C_1$ such that 
$$\| \dot \f (t)\|_{C^0(M)}\leq C_1, \, \forall t\in [0,\infty)$$ 

    \item There exists a uniform constant $C_2$ such that on $[0,\infty)$ 
\begin{equation}
    C_2^{-1}\det(g_0)\leq \det(g)\leq C_2\det(g_0). 
\end{equation}
\end{enumerate}  
 \end{lemma}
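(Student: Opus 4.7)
The plan is to prove both parts by directly applying the parabolic maximum principle to $\dot{\f}$, exploiting the fact that under the parabolic Monge-Amp\`ere formulation \eqref{PMA_calabi} the time derivative $\dot{\f}$ satisfies a linear homogeneous equation with respect to the elliptic operator $L_{g(t)}$.

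First, I would differentiate \eqref{PMA_calabi} in $t$. Since $f$ does not depend on $t$, and since in affine coordinates $(\na d \f)_{ij} = \D_i \D_j \f$, the standard identity $\tfrac{d}{dt}\log\det A = \tr(A^{-1}\dot A)$ gives
\begin{equation*}
\frac{\D}{\D t}\dot{\f} \;=\; g^{ij}(t)\,\D_i\D_j \dot{\f} \;=\; L_{g(t)}\dot{\f},
\end{equation*}
so that $(\D_t - L_{g(t)})\dot{\f} = 0$ on $[0,\infty)\times M$.

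Next, I would apply the maximum principle on the compact manifold $M$: on any finite interval $[0,S]$, perturbing by $\pm \epsilon t$ and sending $\epsilon \to 0$ in the usual way, one concludes
\begin{equation*}
\inf_{M}\dot{\f}(0,\cdot) \;\le\; \dot{\f}(t,x) \;\le\; \sup_{M}\dot{\f}(0,\cdot),\qquad \forall\,(t,x)\in[0,S]\times M.
\end{equation*}
Since $\f|_{t=0}=0$, evaluating \eqref{PMA_calabi} at $t=0$ gives $\dot{\f}(0,\cdot) = f$, which is smooth on the compact manifold $M$ and hence uniformly bounded. Taking $C_1 := \|f\|_{C^0(M)}$ and letting $S\to\infty$ yields assertion (1).

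For (2), I would simply rearrange \eqref{PMA_calabi} as
\begin{equation*}
\log\frac{\det(g_0+\na d\f)}{\det(g_0)} \;=\; \dot{\f} - f,
\end{equation*}
whose right-hand side is bounded in absolute value by $C_1 + \|f\|_{C^0(M)} \le 2C_1$ by step (1). Exponentiating and using $g = g_0 + \na d\f$ gives $C_2^{-1}\det(g_0)\le \det(g)\le C_2\det(g_0)$ with $C_2 = e^{2C_1}$. There is no real obstacle here; the only point to be careful about is that the maximum principle applies uniformly in time on $[0,\infty)$, which follows because the upper and lower bounds from the maximum principle are independent of the interval $[0,S]$ chosen. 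The argument is genuinely soft compared with the $C^0$ bound for $\f$ itself, which will require a separate and harder argument later.
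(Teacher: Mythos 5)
Your proposal is correct and follows exactly the paper's argument: differentiate \eqref{PMA_calabi} in $t$ to obtain $(\D_t - L_{g(t)})\dot\f = 0$, bound $\dot\f$ by the maximum principle (the paper leaves the initial value $\dot\f(0,\cdot)=f$ implicit, which you make explicit), and deduce (2) from the identity $\det g = e^{\dot\f - f}\det g_0$.
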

\begin{proof}
Taking the derivative both two sides of  \eqref{PMA_calabi} with respect to $t$ we get
\begin{equation}
    \dfrac{\D}{\D t} \dot \f = L_{g(t)} \dot \f.
\end{equation}
Then the first estimate follows from the maximum principle and the second estimate follows from the first one and the fact that
$$\det(g)=e^{\dot \f -f }\det (g_0).$$
\end{proof}
Although the uniform estimate for $\dot \f$ is quite straightforward to obtain, it is  important for the $C^0$ estimate and the proof of the convergence of the flow. 

\subsubsection{$C^0$ estimate}
We now prove a uniform $C^0$ estimate using the following parabolic ABP type maximum principle due to Tso \cite{Tso}.
\begin{theorem}\label{thm:ABP_parabolic}
 Let $u\in C^{2,1}((0,T)\times U)$ with $u\leq 0$ on the parabolic boundary $\partial_P ((0,T)\times U))$ and let: $$A_u=\{(t,x):\, u(t,x)\geq 0, \exists \xi, |\xi|\leq Md^{-1},   u(t,x)+ \xi .(y-x)\geq u(x,s),\forall y\in U, s\leq t\},$$ where $M=\max u(t,x)>0$ and $d$ is the diameter of $U$. Then  
 \begin{equation}
     M\leq C_n d^{n/(n+1)}\left( \int_{A_u}|u_t\det (u_{ij})|dxdt \right)^{1/(n+1)},
 \end{equation}
where $C_n$ depends only on $n$. 
\end{theorem}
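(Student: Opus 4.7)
The plan is to adapt the classical Aleksandrov-Bakelman-Pucci normal-map argument to the parabolic setting, with one extra dimension to account for the $u_t$ factor appearing in the bound. In the elliptic ABP one maps the upper contact set $A_u \subset U$ by $x \mapsto \nabla u(x)$ and shows the image covers a ball of radius comparable to $M/d$; the area formula then yields the classical bound $M \leq C_n d \bigl(\int_{A_u} |\det u_{ij}|\bigr)^{1/n}$. For the parabolic case, we need a map from $A_u \subset (0,T) \times U$ into $\mathbb{R}^{n+1}$ whose Jacobian is exactly $|u_t \det(u_{ij})|$.

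First I would introduce the parabolic normal map $\Phi : A_u \to \mathbb{R}^n \times \mathbb{R}$ defined by
\begin{equation*}
\Phi(t,x) = \bigl(\nabla_x u(t,x),\; u(t,x) - x \cdot \nabla_x u(t,x)\bigr).
\end{equation*}
Geometrically, $(p,s) = \Phi(t,x)$ encodes the affine function $y \mapsto p \cdot (y-x) + u(t,x)$, which by the definition of $A_u$ supports $u(\tau,\cdot)$ from above for every $\tau \leq t$. A direct determinant expansion, subtracting $x_i$ times the $i$-th spatial column from the last column, yields
\begin{equation*}
|\det D\Phi(t,x)| = |u_t(t,x) \det(u_{ij}(t,x))|,
\end{equation*}
which is precisely the integrand appearing in the claimed bound.

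Second, I would establish the key geometric fact that $\Phi(A_u)$ has $(n+1)$-dimensional Lebesgue measure at least $c_n M^{n+1}/d^n$. For each slope $p$ with $|p| < M/d$ and each intercept $s$ in an interval of length $\sim M$, slide the candidate hyperplane of slope $p$ and height $s$ downward from high above the graph of $u$ until it first touches at some pair $(t_0,x_0)$. The boundary condition $u \leq 0$ on $\partial_P$, together with $\max u = M$ and $|p|d < M$, forces this first-touch point to lie in the interior with $u(t_0,x_0) \geq 0$, hence $(t_0,x_0) \in A_u$ and $\Phi(t_0,x_0) = (p,s)$. Combining this surjectivity with the area formula gives
\begin{equation*}
\frac{c_n M^{n+1}}{d^n} \leq |\Phi(A_u)| \leq \int_{A_u} |u_t \det(u_{ij})| \, dx \, dt,
\end{equation*}
and the theorem follows by taking $(n+1)$-th roots.

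The main obstacle will be step three: rigorously verifying the surjectivity and applying the area formula. One must control the degenerate subset where $u_t \det(u_{ij})$ vanishes or $\Phi$ fails to be injective, and ensure that first-touch points for every admissible $(p,s)$ genuinely lie in the interior of the parabolic domain rather than on $\partial_P$. The corresponding step is standard in the elliptic ABP, but the interaction of sliding hyperplanes with the time direction is more delicate here, and a consistency check on the sign of $u_t$ at contact points is needed to justify the absolute value in the integrand.
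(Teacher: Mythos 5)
The paper itself does not prove this theorem: it is quoted directly from Tso \cite{Tso}, so the relevant comparison is with the classical argument. Your outline is in essence Tso's original proof: the parabolic normal map $\Phi(t,x)=(\nabla_x u,\,u-x\cdot\nabla_x u)$, the Jacobian identity $\det D\Phi=u_t\det(u_{ij})$, a lower bound $|\Phi(A_u)|\geq c_n M^{n+1}/d^n$, and the inequality $|\Phi(A_u)|\leq\int_{A_u}|\det D\Phi|$, which holds for $C^1$ maps with no injectivity whatsoever (so that worry can simply be dropped). Two caveats: the Jacobian computation uses $\partial_t\nabla_x u$, which is slightly more regularity than $C^{2,1}$ and is handled by smoothing or by the standard normal-mapping treatment; and for slopes $|p|$ close to $M/d$ your ``interval of intercepts of length $\sim M$'' degenerates, so one should restrict to $|p|\leq M/(2d)$, which only changes the dimensional constant.

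The one step that, as literally described, would fail is the surjectivity mechanism. Lowering a hyperplane of fixed slope $p$ ``from high above until it first touches'' selects a single touching height for each $p$, hence only an $n$-parameter family of contact data, which cannot cover a set of measure $\sim M^{n+1}/d^n$ in $\mathbb{R}^{n+1}$. The parabolic argument instead fixes the plane entirely and slides in time. Let $u(t_1,x_0)=M$, take $|p|\leq M/(2d)$ and $\lambda\in(|p|d,\,M)$ (length at least $M/2$), and set $\ell(y)=\lambda+p\cdot(y-x_0)$. Since $u\leq 0$ on the parabolic boundary, including the initial slice, one has $u<\ell$ at $t=0$ on all of $U$ and on $\partial U$ for all times, while $u(t_1,x_0)=M>\ell(x_0)$; the first time $t^*$ at which the graph of $u$ reaches $\ell$ therefore produces an interior spatial contact point $x^*$ with $u(t^*,x^*)=\ell(x^*)\geq\lambda-|p|d>0$, $\nabla_x u(t^*,x^*)=p$, and $u(s,y)\leq\ell(y)$ for all $y\in U$, $s\leq t^*$; thus $(t^*,x^*)\in A_u$ and $\Phi(t^*,x^*)=(p,\lambda-p\cdot x_0)$. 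Since $(p,\lambda)\mapsto(p,\lambda-p\cdot x_0)$ is a measure-preserving shear, $\Phi(A_u)$ contains a set of measure $c_nM^{n+1}/d^n$, and the stated bound follows exactly as in your final display; note also that at such contact points $u_t\geq 0$ and $(u_{ij})\leq 0$ automatically, which settles your sign question about the integrand. With this replacement of the sliding step, your plan is the correct (and standard) proof.
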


\begin{proposition}\label{prop:C0_est_Calabi}
    There exists a uniform constant $C>0$ such that 
$$\|\tilde \f\|_{C^0(M)}\leq C $$  where 
    $$\tilde \f:= \f- \dfrac{1}{Vol_{g_0}(M)}\int_M \f dV_{g_0},$$
    and 
    $$ Vol_{g_0}(M):=\int_M  dV_{g_0}.$$
\end{proposition}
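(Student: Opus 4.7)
The strategy is to establish the uniform $C^0$ bound by applying Tso's parabolic ABP estimate (Theorem \ref{thm:ABP_parabolic}) in local affine coordinate charts, adapting the technique from \cite{PT}. The main inputs are already available: by Lemma \ref{lem:bound_dot_phi}, $|\dot\f|\le C_1$ uniformly on $[0,\infty)\times M$, and consequently $C^{-1}\det g_0\le \det(g_0+\na d\f)\le C\det g_0$ for some uniform $C$. Since $\int_M\tilde\f\,dV_{g_0}=0$ by construction, it suffices to bound the oscillation $\mathrm{osc}_M\,\f(t,\cdot)$ uniformly in $t$.

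I would next reduce everything to a local Euclidean problem. Cover $M$ by a fixed finite collection of affine charts $\{(U_\alpha,\psi_\alpha)\}$ on which $g_0=\na d\psi_\alpha$, with each $\psi_\alpha$ locally strictly convex. On each chart the function $u_\alpha(t,\cdot):=\psi_\alpha+\f(t,\cdot)$ is strictly convex, satisfies $\det(\D_i\D_j u_\alpha)=\det g$ uniformly bounded above and below, and has $\D_t u_\alpha=\dot\f$ uniformly bounded. In each chart we are therefore in the Euclidean parabolic Monge-Amp\`ere setting to which Theorem \ref{thm:ABP_parabolic} applies directly.

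To control $\sup\tilde\f$, suppose it equals $A$ and is attained at some $(t_0,x_0)$; pick an affine chart $U_\alpha$ around $x_0$ and a Euclidean ball $B_r(x_0)\subset U_\alpha$ of fixed radius. I would then build an auxiliary function of the form $w=-u_\alpha+L+c$ on the parabolic cylinder $[t_0-\tau,t_0]\times B_r(x_0)$, where $L$ is an affine function produced by a supporting hyperplane of $u_\alpha(t_0,\cdot)$ chosen so as to force $w\le 0$ on the parabolic boundary, and $c$ is calibrated so that $w(t_0,x_0)$ is comparable to $A$. Theorem \ref{thm:ABP_parabolic} applied to $w$, together with the uniform control on $|\D_t u_\alpha|$ and $\det D^2 u_\alpha$, then bounds $w(t_0,x_0)$ and hence $A$ by a constant depending only on $g_0$, $f$, and the cover. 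A symmetric argument controls $-\inf\tilde\f$; combined with $\int_M\tilde\f\,dV_{g_0}=0$, this yields the desired uniform $C^0$ estimate.

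The main obstacle is the construction of the affine barrier $L+c$: the local potential $u_\alpha$ is defined only up to the addition of affine functions, so the ABP bound on $u_\alpha$ itself is empty, and one must use the global mean-zero normalization $\int_M\tilde\f\,dV_{g_0}=0$ (and the finite-cover structure of the Hessian atlas) to fix the correct additive and linear parts. The rest is a bookkeeping exercise in passing between the global oscillation of $\f$ on $M$ and the local one in a single affine chart.
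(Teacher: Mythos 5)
Your toolkit---Tso's parabolic ABP estimate in affine charts, plus the uniform bounds on $\dot\f$ and $\det g$ from Lemma \ref{lem:bound_dot_phi}, plus reducing to an oscillation bound via the zero-mean normalization---matches the paper's first proof in spirit, but the step you dismiss as a ``bookkeeping exercise'' is the actual heart of the argument, and as sketched it does not go through. Theorem \ref{thm:ABP_parabolic} bounds the maximum of a test function that is nonpositive on the parabolic boundary by an integral over the contact set; if you only feed in the uniform bounds on $|\D_t u_\alpha|$ and $\det(D^2 u_\alpha)$, the right-hand side is a fixed constant, so the estimate is useful only if you can construct a barrier $w=-u_\alpha+L+c$ that is $\le 0$ on the parabolic boundary yet of size comparable to $A$ at the interior extremum. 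No such affine $L$ can be produced from purely local data: a priori $\tilde\f$ could be nearly constant and of huge absolute value on the whole chart, in which case $u_\alpha(t_0,\cdot)$ differs from any of its supporting hyperplanes by $O(1)$ there, and $w$ cannot separate the center from the parabolic boundary by an amount comparable to $A$. Moreover the claimed ``symmetric argument'' does not exist: $u_\alpha$ is convex, so supporting hyperplanes and contact sets see only one side; the supremum bound is not obtained by ABP at all in the paper.

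What closes the argument in the paper is a global $L^1$ input that your sketch never invokes: the family $\mathcal{F}_0=\{u\in C^\infty(M):\,g_0+\na d u>0,\ \sup_M u=0\}$ is relatively compact in $L^1(M,dV_{g_0})$ (H\"ormander). This gives directly $\sup_M\f\le \frac{1}{Vol_{g_0}(M)}\int_M\f\,dV_{g_0}+C$, hence the upper bound on $\tilde\f$, and it enters the ABP step quantitatively for the lower bound: at the space--time minimum, with $\phi=\tilde\f+\frac{\delta^2}{4}|x|^2+(t-t_0)^2$ and $L=\min\tilde\f$, Tso's estimate yields $c\,\delta^{2n+2}\le\int_S|\phi_t|\det(\phi_{ij})\,dx\,dt\le A\,|S|$ (the contact-set constraint $\phi_{ij}\ge0$ together with the determinant bound controls the integrand), while on $S$ one has $|\phi|\ge|L|/2$, so $|S|\le\frac{2}{|L|}\int_{U_\delta}|\phi|\le \frac{C\delta}{|L|}\,\|\tilde\f-\sup_M\tilde\f\|_{L^1(M,g_0)}$. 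It is the combination of the lower bound on the contact-set integral with this measure estimate---not a direct bound on $w(t_0,x_0)$---that forces $|L|\le C\delta^{-(2n+1)}$. Without the $L^1$ compactness and this contact-set-measure mechanism, your proposal has no way to convert the mean-zero normalization into a pointwise bound. (The paper's alternative proof freezes time and applies the elliptic ABP bound of Theorem \ref{thm:C0_estimate} to $\det(g_0+\na d\f)=e^{\dot\f-f}\det g_0$; that route hinges on exactly the same $L^1$ estimate.)
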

\begin{proof}
We first remark that the  set $\mathcal{F}_0=\{u\in C^{\infty}(M): g_0+\na d u>0 ,\, \sup_X u=0 \}$ is relatively compact in $ L^{1}(X, \mu )$ with $\mu=\frac{1}{\int_M dV_{g_0}} dV_{g_0}$ (see for instance \cite[Theorem 3.2.12]{Hor}).  Therefore there exists a constant $C$
such  depending only on $M$ and $g_0$ such that  
$$\sup_M \f \leq \int_M \f d\mu +C,$$
hence 
$\tilde{\f}\leq C$. 

 For the uniform lower bound of $\tilde{\f}$, we  follow idea by  the second author jointly with D. H. Phong  in \cite{PT} to use the parabolic ABP estimate due to Tso (Theorem \ref{thm:ABP_parabolic}). 
Fix now any $T<\infty $, and set for each $t$ 
$$
L=\min_{[0,T]\times M} \tilde \f =\tilde \f (t_0,x_0)
$$
for some $(t_0, x_0)\in [0,T]\times M$. We now show that $L$ is bounded from below by a constant independent of $T$. We can assume that $t_0>0$, otherwise we are already done. Let $(x^1,\cdots,x^n)$ be affine coordinates for $M$ (with respect to $\na$) centered at $x_0$, $B_1=\{x|\,|x|<1\}$, and define the  function 
$$\phi =\tilde{\f}+{\delta^2\over 4}|x|^2+|t-t_0|^2,
$$
on  $U_\delta =B_1\times \{t|\, -\delta\leq 2(t-t_0)<\delta\}$,
where $\delta>0$ is small. Clearly $\phi $ attains its minimum on $U$ at $(t_0, x_0)$, and $\phi \geq \min _{U_\delta}\phi +{1\over 4}\delta^2$ on the parabolic boundary of $U_\delta$. 

\medskip 
Define the set
\begin{equation}
\label{contact set}
S:=\left\lbrace (x,t) \in U_\delta: \begin{array}{l}
  \phi(x,t)\leq \phi(z_0,t_0)+{1\over 4}\delta^2,\quad |D_x \phi(x,t) |<\frac{\delta^2}{8}, \textit{ and }\\
  \phi(y,s)\geq \phi(x,t)+D_x \phi(x,t). (y-x),\, \forall y\in U, s\leq t 
 \end{array}
\right\rbrace.
\end{equation}
 Then applying Theorem \ref{thm:ABP_parabolic} to  the function  $u=-\phi +\min_U\phi +{\delta^2\over 4}$ we obtain 
 $$C\delta^{2n+2}\leq \int_{S} (-\phi_t)\, \det(\phi_{ij})dxdt, $$
   for    a constant $C=C(n)>0$. 
It follows from Lemma  \ref{lem:bound_dot_phi}  that  $|\dot \f|$  and  $\det(\phi_{ij})$ are uniform bounded from above hence 
\begin{equation}
    C\delta^{2n+2}\leq A\int_S dxdt. 
\end{equation}
Next, by the definition of $S$, we have $\phi(x,t)\leq L+{\delta^2\over 4}$. Since we can assume that $|L|>\delta^2$, it follows that $\phi <0$ and $|\phi|\geq {|L|\over 2}$ on $S$.  Therefore

\begin{equation}\label{eq:ABP}
    C\delta^{2n+2}
\leq
A\int_S dxdt
\leq
A{|L|\over 2}\int_S |\phi(x,t)|dxdt
\leq
A{|L|\over 2}\int_{U_\delta} |\phi(x,t)|dxdt.
\end{equation}

On $S$, we  also have $$
|\phi|
=-\phi =-\tilde{\f}-{\delta^2\over 4} |x|^2-(t-t_0)^2
\leq -\tilde \f+ \sup_M \tilde{\f}. 
$$
since ${\rm sup}_X\tilde{\f}\geq 0$. Combining with \eqref{eq:ABP}  gives 

\begin{equation} \label{ineq_C0_calabi}
C\delta^{2n+2}
\leq A {|L|\over 2}\int_{|t|<{1\over 2}\delta}\|\tilde \f - {\rm sup}_M \tilde{\f}\|_{L^1(M, g_0)} dt.
\end{equation}
Since  $\mathcal{F}_0$ is relatively compact in $ L^{1}(X, \mu )$, there is a uniform constant $C>0$ such that  $\|\tilde \f - {\rm sup}_M \tilde{\f}\|_{L^1(M, g_0)} \leq C$. Therefore we get
$$C\delta^{2n+2}
\leq A {|L|\over 2}\int_{|t|<{1\over 2}\delta}\|\tilde \f - {\rm sup}_M \tilde{\f}\|_{L^1(M, g_0)} dt
\leq A'\delta {|L|\over 2}$$
from which the desired bound for $L$ follows.
\end{proof}
\noindent
{\bf Remark.} Our approach here can be applied to the K\"ahler-Ricci flow to get a new  $C^0$ estimate which is different from the initial approach of Cao \cite{Cao} for the K\"ahler Ricci flow. 
\subsection{$C^0$ estimate for elliptic Monge-Amp\`ere equations}
We now give another $C^0$ estimate for the flow using a uniform $C^0$ estimate for elliptic Monge-Amp\`ere equation
\begin{equation}
   \det (g_0+\na d \phi)=f \det(g_0)
\end{equation}

The idea of proof comes from the one of $C^0$ estimate  for the Complex Monge-Amp\`ere equation using the Aleksandrov-Bakelman-Pucci estimate  by Blocki  \cite{Bl}. This approach  originated in the work of Cheng-Yau (cf. \cite{Bed})  and  is recently   revisted  by Blocki  \cite{Bl, Bl2} and Sz\'ekelyhidi  \cite{Sze}.

We shall use the following ABP type estimate (cf. \cite[Lemma 9.2]{GT} and \cite[Proposition 11]{Sze}). 
\begin{proposition}\label{prop:ABP}
Let $u: \mathbb{B}\rightarrow \mathbb{R}$ be a smooth function such that $u(0)+\epsilon\leq \inf_{\mathbb{B}} u$ where $\mathbb{B}$ denotes the unit ball in $\mathbb{R}^n$.  Define the contact set
\begin{equation}
S=\left\lbrace
 x\in \mathbb{B}: |D u(x)|\leq \epsilon/2 \text{ and }\,
u(y)\geq u(x)+D u(x).(y-x), \forall y \in \mathbb{B} 
  \right\rbrace. 
\end{equation}
Then there exists a dimensional  constant $C_n>0$ such that
\begin{equation}
C_n\epsilon^n\leq \int_S \det(D^2 u). 
\end{equation}
\end{proposition}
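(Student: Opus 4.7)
The plan is to prove this via a standard normal-mapping (Legendre-transform) argument. Given any vector $p\in\R^n$ with $|p|\leq\epsilon/2$, consider the affine perturbation $v_p(x):=u(x)-p\cdot x$ on $\bar{\mathbb{B}}$. I want to show that each such $p$ is realized as $Du(x_p)$ for some contact point $x_p\in S$.

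The assumption (which I read as $u(0)+\epsilon\leq\inf_{\D\mathbb{B}}u$, the only geometrically consistent reading) together with $|p|\leq\epsilon/2$ and $|x|=1$ on $\D\mathbb{B}$ yields
\begin{equation*}
v_p(y)=u(y)-p\cdot y\geq u(0)+\epsilon-|p|\geq u(0)+\tfrac{\epsilon}{2}\geq v_p(0)+\tfrac{\epsilon}{2},\quad y\in\D\mathbb{B}.
\end{equation*}
Hence $v_p$ attains its minimum over $\bar{\mathbb{B}}$ at some interior point $x_p\in\mathbb{B}$. At $x_p$ we get $Dv_p(x_p)=0$, so $Du(x_p)=p$ and in particular $|Du(x_p)|\leq\epsilon/2$. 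Furthermore, minimality of $v_p$ gives $u(y)-p\cdot y\geq u(x_p)-p\cdot x_p$ for every $y\in\mathbb{B}$, which rewrites as the supporting-hyperplane condition $u(y)\geq u(x_p)+Du(x_p)\cdot(y-x_p)$. Thus $x_p\in S$, and we conclude $Du(S)\supset B_{\epsilon/2}(0)$.

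To finish, I apply the area formula to the smooth map $Du\colon S\to\R^n$. At each $x\in S$ the second-order supporting condition implies $D^2u(x)\geq 0$, so $\det(D^2u(x))\geq 0$ and in particular the Jacobian of $Du$ is $\det(D^2u)\geq 0$. Then
\begin{equation*}
c_n\,\epsilon^n=|B_{\epsilon/2}(0)|\leq |Du(S)|\leq\int_S\det(D^2u)\,dx,
\end{equation*}
which is the claim with $C_n=c_n=|B_{1/2}(0)|$.

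There is no serious obstacle here: the only subtle point is justifying the second inequality, i.e. that $Du(S)$ is covered (with multiplicity at least one) by $Du$ restricted to $S$ and that the change-of-variables/area-formula bound applies despite $Du$ possibly not being injective on $S$. This is handled by the standard smooth area formula, which gives $|Du(S)|\leq\int_S|{\det D^2u}|\,dx$ regardless of injectivity, and the non-negativity of $\det D^2u$ on $S$ removes the absolute value. The rest is elementary.
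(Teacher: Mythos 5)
Your proof is correct: the normal-mapping argument (for each $|p|\leq\epsilon/2$ minimize $u(x)-p\cdot x$, note the minimum is interior because of the boundary condition, conclude $B_{\epsilon/2}(0)\subset Du(S)$, then bound $|Du(S)|$ by $\int_S\det(D^2u)$ via the area formula and $D^2u\geq 0$ on $S$) is exactly the standard proof, and your reading of the hypothesis as $u(0)+\epsilon\leq\inf_{\partial\mathbb{B}}u$ is the intended one (as stated, with $\inf_{\mathbb{B}}$, the hypothesis would be vacuous). The paper does not prove this proposition but cites Gilbarg--Trudinger (Lemma 9.2) and Sz\'ekelyhidi (Proposition 11), whose argument coincides with yours, so there is nothing to add.
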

\begin{theorem}\label{thm:C0_estimate}
 Let $(M,\na,g)$ be a compact Hessian manifold of dimension $n$ and   $\phi\in C^2(M)$ satisfying $$\quad g+\na d \phi> 0,\quad \det( g+\na d \phi)= f\det g.$$ 
 
 There exists a uniform constant $C$ depending only on $\|f\|_{L^\infty(M)}$ and $g_0$ such that 
 $$ {\rm osc}_M\phi :=\sup_M \phi -\inf_M \phi  \leq C.$$
\end{theorem}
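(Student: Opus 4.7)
The plan is to adapt Blocki's ABP approach to the Hessian-manifold setting, combining a local ABP estimate in an affine chart around the minimum point with a global $L^1$ compactness property for affine-convex functions. I normalize $\phi$ by subtracting a constant so that $\sup_M \phi = 0$, and set $L := -\inf_M \phi$, attained at some $x_0 \in M$; the oscillation equals $L$, so it suffices to bound $L$ from above. Arguing as in the proof of Proposition \ref{prop:C0_est_Calabi}, the family
\[
\mathcal{F}_0 := \{u \in C^2(M) : g + \na d u > 0,\ \sup_M u = 0\}
\]
is relatively compact in $L^1(M, dV_g)$ (reducing by a finite affine atlas to the classical $L^1$ compactness of convex functions), hence $-\int_M \phi\, dV_g \leq C_0$ for a constant $C_0 = C_0(g)$.

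For the local step, I choose affine coordinates $\{x^1,\dots,x^n\}$ centered at $x_0$ and a radius $\rho > 0$, depending only on $g$, so that $g = \na d \psi$ on the Euclidean ball $B_\rho$, with $\psi(0) = 0$, $D\psi(0) = 0$, and $\psi(x) \geq \lambda |x|^2$ for some $\lambda = \lambda(g) > 0$. The auxiliary function $v := \phi + \psi + L$ is strictly convex on $B_\rho$ (since $D^2 v = g + \na d\phi > 0$), satisfies $\det D^2 v = f \det g$, $v(0) = 0$, and $v \geq \lambda \rho^2$ on $\partial B_\rho$ by the choice of $L$. Rescaling to the unit ball via $\tilde v(y) := v(\rho y)$ puts us in the setting of Proposition \ref{prop:ABP} with $\epsilon = \lambda \rho^2$, yielding
\[
C_n (\lambda \rho^2)^n \leq \int_{\tilde S} \det D^2 \tilde v\, dy \leq \rho^{2n}\|f\|_{\infty}\sup_{B_\rho}\det g \cdot |\tilde S|,
\]
so $|\tilde S| \geq c/\|f\|_\infty$ with $c = c(g)$. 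Plugging $z = 0$ into the contact-set inequality and using $|D\tilde v| \leq \epsilon/2$ forces $\tilde v \leq \lambda\rho^2/2$ on $\tilde S$, which after unscaling gives $\phi \leq -L + \lambda\rho^2/2$ on a subset $E \subset B_\rho$ of Euclidean measure at least $c'\rho^n/\|f\|_\infty$.

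To conclude, since $\phi \leq 0$ globally and $\phi$ is much smaller than $0$ on $E$,
\[
-C_0 \leq \int_M \phi\, dV_g \leq \int_E \phi\, dV_g \leq \bigl(-L + \tfrac{1}{2}\lambda\rho^2\bigr)\frac{c''}{\|f\|_\infty},
\]
which rearranges to $L \leq C(\|f\|_\infty, g)$. The main obstacle is the bookkeeping between the local and global parts: one must track how $\rho$, $\lambda$, $\sup_{B_\rho}\det g$, and the volume distortion between Euclidean and $g$-measures on the chart enter the estimate, so that the final constant depends only on $\|f\|_\infty$ and $g_0$, with no hidden dependence on $L$ itself or on an arbitrary choice of affine chart.
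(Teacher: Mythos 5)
Correct, and essentially the paper's own proof: both arguments rest on the ABP estimate of Proposition \ref{prop:ABP} applied in an affine chart centered at the minimum point together with the relative compactness in $L^1(M,dV_{g})$ of the normalized admissible family $\mathcal{F}_0$, and your final step (lower bound on the contact-set measure times the near-minimal value of $\phi$, tested against the $L^1$ bound) is the paper's inequality $|S|\le \|\psi\|_{L^1}/|I+\epsilon/2|$ read in the other direction. The only variation is technical: you convexify $\phi$ by adding a local Hessian potential of $g$ and rescale to the unit ball with $\epsilon=\lambda\rho^2$, whereas the paper perturbs by $\epsilon|x|^2$ with $\epsilon$ small enough that on the contact set $\det(\phi_{ij}+2\epsilon\delta_{ij})\le\det(g_{ij}+\phi_{ij})=f\det g$, thereby avoiding the uniform control of local potentials and chart sizes that you rightly flag (and which is indeed obtainable by a standard finite-atlas compactness argument).
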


\begin{proof}
By adding some constant, we assume that $\sup_M \phi=0$.
It  suffices to  derive a uniform bound for $I= \inf_M \phi$.  

 We fix an affine coordinates $(U, x^1,\ldots,x^n)$ with $U=\{x\in \mathbb{R}^n :  |x^i|<1, \, \forall \, i=1,\ldots,n \}$ for which  $I$ is achieved at the origin. Let $\psi= \phi+ \epsilon |x|^2$ for some small $\epsilon>0$. Then on the unit ball $\mathbb{B}=\{x:  |x|<1\}$,   $I= \inf_{\mathbb{B}}\psi =\psi(0) $  and $ \psi(x)\geq L+\epsilon$ for $x\in \D \mathbb{B}$.   It follows from Proposition \ref{prop:ABP}
  that  
\begin{equation}
C_n \epsilon^{n}\leq \int_S \det (\psi_{ij}). 
\end{equation}
Since on $S$ we have  $(\psi_{ij})=(\phi_{ij}+\epsilon\delta_{ij})\geq 0$ and $\det(g +\na d\phi) =f\det g$, we imply that $\det (\psi_{ij})$ is uniformly bounded from above on $S$. Therefore
\begin{equation}
\label{eq:ineq_vol_contact_set_1}
C_n \epsilon^{n}\leq \int_S \det (\psi_{ij}) \leq C |S|, 
\end{equation}
where $|S|$ denotes the Lebesgue measure of $S$. 

By the definition of $ S$, we have $\psi(0)\geq \psi(x) -\epsilon/2$ and $\psi(x)\leq I +\epsilon/2 <0$.  Therefore
\begin{equation}\label{eq:ineq_vol_contact_set_2}
Vol(S)\leq \frac{\| \psi \|_{L^1}}{|I+ \epsilon/2| }.
\end{equation}
Since $\mathcal{F}_0=\{u\in C^{\infty}(M): g_0+\na d u>0 ,\sup_X u=0 \}$ is relatively compact in $ L^{1}(X, \mu )$ with $\mu=\frac{1}{\int_M dV_{g_0}} dV_{g_0}$, we have $\|\psi\|_{L^1}$ is uniformly bounded. The inequalities \eqref{eq:ineq_vol_contact_set_1} and \eqref{eq:ineq_vol_contact_set_2} thus give a uniform bound for $I$. 
\end{proof}

We now can apply this theorem to the parabolic Monge-Amp\`ere equation \eqref{PMA_calabi}.
\begin{proof}[Another proof for Proposition \ref{prop:C0_est_Calabi}]
We rewrite the flow as
$$\det(g_0+\na d \f)=e^{\dot \f- f}\det g_0$$
Since $ \dot \f$ is uniformly bounded (Lemma \ref{lem:bound_dot_phi}), we can apply Theorem \ref{thm:C0_estimate} to  obtain the uniform estimate for the oscillation of $\f$.   

Next we have  $\int_M \tilde \f dV_g=0$, thus $\inf_M \tilde{\f} \leq 0 $ and $\sup_M \tilde  \f\geq 0$. Therefore the uniform estimate for the oscillation of $\f$  gives a uniform bound for $|\tilde{ \f}|$. 
\end{proof}

\subsection{Higher order estimates}\label{sec:higher_est_Calabi}
We now prove a $C^2$ estimate. Note that there is a difficulty here since we have only a uniform bound for the oscillation of $\f$ but not on $\f$. To overcome this, we shall use the maximum principle for a test function which contains $\tilde{\f}$ instead of  $\f$.

\begin{lemma}\label{lem:C2_Calabi}
There exists  uniform positive  constants $C$ such that 
\begin{equation}\label{eq:trace_est_calabi}
    \tr_{g_0} g(t,x)\leq C.
\end{equation}
In consequence, there exists a uniform $C$ such that
\begin{equation}\label{eq:compare_metrics_calabi}
    C^{-1}g_0\leq g\leq Cg_0.
\end{equation}
\end{lemma}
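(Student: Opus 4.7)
The plan is to adapt the test-function argument from Proposition \ref{lem:trace}, replacing $\f$ by its normalization $\tilde{\f}$ since only the latter is known to be uniformly bounded. The basic observation from Lemma \ref{lem:evol_log_tr} still applies: there is a constant $C_0$, depending only on $g_0$, with
\begin{equation*}
\left(\frac{\partial}{\partial t}-L_g\right)\log\tr_{g_0}g\leq C_0\,\tr_g g_0.
\end{equation*}
I would then consider the auxiliary function $H=\log\tr_{g_0}g-A\tilde{\f}$ with $A=C_0+1$.

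The key computations are as follows. Because $\tilde{\f}$ and $\f$ differ by a function of $t$ alone, $\nabla d\tilde{\f}=\nabla d\f=g-g_0$, so $\tr_g\nabla d\tilde{\f}=n-\tr_g g_0$. Also, $\partial_t\tilde{\f}=\dot\f-\mathrm{Vol}_{g_0}(M)^{-1}\int_M\dot\f\,dV_{g_0}$, which by Lemma \ref{lem:bound_dot_phi}(1) is uniformly bounded. Combining these,
\begin{equation*}
\left(\frac{\partial}{\partial t}-L_g\right)H\leq (C_0-A)\tr_g g_0-A\,\partial_t\tilde{\f}+An\leq -\tr_g g_0+C_1.
\end{equation*}
At any interior maximum point $(t_0,x_0)$ of $H$ on $[0,T]\times M$ with $t_0>0$ this forces $\tr_g g_0(t_0,x_0)\leq C_1$; combined with the two-sided bound $C_2^{-1}\det g_0\leq \det g\leq C_2\det g_0$ from Lemma \ref{lem:bound_dot_phi}(2) and the elementary inequality of Lemma \ref{lem:ineq_volumes}, this also yields $\tr_{g_0}g(t_0,x_0)\leq C_3$. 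The case $t_0=0$ is trivial since $g(0)=g_0$ gives $\tr_{g_0}g(0,\cdot)=n$.

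Feeding this back into the definition of $H$ and using the uniform bound $\|\tilde{\f}\|_{C^0}\leq C$ of Proposition \ref{prop:C0_est_Calabi} at both $(t_0,x_0)$ and $(t,x)$ yields
\begin{equation*}
\log\tr_{g_0}g(t,x)=H(t,x)+A\tilde{\f}(t,x)\leq H(t_0,x_0)+A\tilde{\f}(t,x)\leq C,
\end{equation*}
which is \eqref{eq:trace_est_calabi}. The two-sided comparison \eqref{eq:compare_metrics_calabi} then follows immediately from the trace bound just obtained together with the determinant bound of Lemma \ref{lem:bound_dot_phi}(2), invoking Lemma \ref{lem:ineq_volumes} one more time to pass from control of $\tr_{g_0}g$ to control of $\tr_g g_0$.

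The main subtlety, and the only place where the argument departs from the normalized flow case, is the substitution of $\tilde{\f}$ for $\f$ in the test function: one must check that the time derivative $\partial_t\tilde{\f}$ remains uniformly bounded and that $\nabla d\tilde{\f}=\nabla d\f$, so that the maximum-principle calculation proceeds exactly as before. Both facts follow at once from Lemma \ref{lem:bound_dot_phi}(1) and from the observation that $\f$ and $\tilde{\f}$ differ by a spatially constant function, so no essentially new estimate is required.
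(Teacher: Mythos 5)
Your proof is correct and follows essentially the same route as the paper: the same test function $\log\tr_{g_0}g-(C_0+1)\tilde\f$, the same maximum-principle computation using $L_g\tilde\f=n-\tr_g g_0$ and the uniform bound on $\D_t\tilde\f$ from Lemma \ref{lem:bound_dot_phi}, the same passage from a bound on $\tr_g g_0$ at the maximum point to one on $\tr_{g_0}g$ via Lemma \ref{lem:ineq_volumes} and the determinant bound, and the same conclusion via the $C^0$-estimate of Proposition \ref{prop:C0_est_Calabi}. The one imprecision is your claim that Lemma \ref{lem:evol_log_tr} applies verbatim: that lemma is proved for the untwisted flow $\D_t g=-\beta(g)$, whereas here $\D_t g=-\beta(g)+\eta$, so the evolution of $\log\tr_{g_0}g$ acquires the extra term $\tr_{g_0}\eta/\tr_{g_0}g$; this term is uniformly bounded (since $\eta\leq C g_0$ on the compact $M$ and $\tr_{g_0}g$ is bounded below by Lemma \ref{lem:bound_dot_phi}(2) together with Lemma \ref{lem:ineq_volumes}), so your inequality $(\D_t-L_g)H\leq -\tr_g g_0+C_1$ survives with a larger constant and the rest of the argument is unchanged — the paper simply carries this term explicitly.
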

\begin{proof}
To prove \eqref{eq:trace_est_calabi} we apply the maximal principle to $G= \log \tr_{g_0}g -B\tilde\f  $. 
 It follows from the proof of  Lemma \ref{lem:evol_log_tr} that 
\begin{eqnarray}
     \left(  \dfrac{\D }{\D t}-L_{g}\right)  G \leq C_0\tr_{g}g_0 - \dot{\tilde{\f}} +B\tr_{g} (g-g_0)  +\frac{\tr_{g_0}\eta}{\tr_{g_0}g}.
\end{eqnarray}
where $C_0$  only depends  on $g_0$.  Suppose that $\eta \leq C g_0$ and $\tr_{g_0} g\geq 1$ (otherwise we have done), then choosing $B=C_0+1$ and using the fact that $\dot \f $ is uniformly bounded (cf. Lemma \ref{lem:bound_dot_phi}) we have
\begin{eqnarray}
  \left(  \dfrac{\D }{\D t}-L_{g}\right)G\leq   - \tr_{g}g_0+C
\end{eqnarray}
Suppose that $G$ admits it maximum at $(t_0,x_0)$ at the point $(t_0,x_0)$ and assume without of loss of generality that $t_0>0$. Then the maximum principle implies that $\tr_{g}g_0\leq C$. Using again Lemma \ref{lem:ineq_volumes} gives  $\tr_{g_0}g \leq C$.  Then for any $(t,x)\in [0,\infty)\times M$, we have
$ \log \tr_{g_0}g (t,x) -B \tilde \f(t,x)\leq \log C-B\tilde \f(x_0,t_0)$, thus \eqref{eq:trace_est_calabi} follows from the uniform estimate for $\tilde{\f}$ (cf. Proposition \ref{prop:C0_est_Calabi}). 
\end{proof}

Again we  can use the Evans-Krylov and Schauder estimates to get $C^k$ estimates. 
\begin{proposition}\label{prop:Ck_Calabi}
For any $k\in \N$, there exists a uniform constant $C_k>0$ such that on $[0,\infty)$ 
$$\|\tilde{ \f} \|_{C^k(M)}\leq C_k. $$
\end{proposition}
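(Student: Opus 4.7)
The plan is to argue exactly as in Proposition \ref{prop:Ck_max_time}: bootstrap from the uniform $C^2$-estimate of Lemma \ref{lem:C2_Calabi} using the parabolic Evans--Krylov theorem followed by parabolic Schauder estimates. The concavity of $\log\det$ on positive definite matrices together with the uniform ellipticity provided by the metric equivalence $C^{-1}g_0\le g(t)\le Cg_0$ is what makes Evans--Krylov applicable.

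First, I would note that $\tilde{\f}$ and $\f$ differ only by a function of $t$, so for any multi-index $\alpha$ with $|\alpha|\ge 1$, $\partial^\alpha_x \tilde{\f}=\partial^\alpha_x \f$; moreover $\|\tilde{\f}\|_{C^0(M)}$ is already uniformly bounded by Proposition \ref{prop:C0_est_Calabi}. Thus it suffices to bound $\|\f(t,\cdot)\|_{C^k(M)}$ uniformly for $k\ge 1$. The equation \eqref{PMA_calabi} rewritten as
\begin{equation*}
\frac{\partial\f}{\partial t}=\log\frac{\det(g_0+\na d\f)}{\det g_0}+f
\end{equation*}
is a fully nonlinear parabolic equation whose operator is concave in $\na d\f$, and by Lemma \ref{lem:C2_Calabi} the Hessian $\na d\f$ is uniformly bounded with $g_0+\na d\f$ uniformly equivalent to $g_0$, so the linearization has uniformly bounded and uniformly elliptic coefficients.

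Applying the parabolic Evans--Krylov theorem locally in affine charts yields a uniform $C^{2,\alpha}$ estimate on $\f$ (in parabolic H\"older norm) for some $\alpha\in(0,1)$, independent of $t\in[0,\infty)$. Then I would differentiate the equation with respect to an arbitrary first-order constant-coefficient operator $D$ in affine coordinates, obtaining
\begin{equation*}
\Bigl(\frac{\partial}{\partial t}-L_{g(t)}\Bigr)D\f=Df,
\end{equation*}
a linear parabolic equation whose coefficients $g^{ij}$ have uniformly bounded $C^{\alpha}$ norm (from the $C^{2,\alpha}$ bound on $\f$). The parabolic Schauder estimates then give a uniform $C^{2,\alpha}$ bound on $D\f$, hence a uniform $C^{3,\alpha}$ bound on $\f$. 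Iterating this procedure, differentiating further and always absorbing the new terms into right-hand sides with uniformly bounded parabolic $C^{\alpha}$ norm, one obtains uniform $C^{k,\alpha}$ bounds for all $k$.

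The main technical point—rather than a genuine obstacle—is ensuring the estimates are truly uniform on $[0,\infty)$ and not just on compact sub-intervals; this is automatic here because all the ingredients (the Evans--Krylov constant, the Schauder constant, the ellipticity constants from Lemma \ref{lem:C2_Calabi}, the $C^0$ and $\dot\f$ bounds) depend only on $g_0$, $f$, $\eta$, and the uniform constants already established, and not on $t$. Thus the proof is essentially a verbatim repetition of the argument in Proposition \ref{prop:Ck_max_time}, applied to the flow \eqref{PMA_calabi} with the $C^0$-bound on $\tilde{\f}$ (Proposition \ref{prop:C0_est_Calabi}) and the metric-equivalence bound (Lemma \ref{lem:C2_Calabi}) as inputs in place of Lemma \ref{lem:C0_max_time} and Theorem \ref{thm:C2_max_time}.
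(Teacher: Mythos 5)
Your proposal is correct and follows exactly the paper's route: the paper gives no separate argument here, simply invoking the Evans--Krylov and Schauder bootstrap of Proposition \ref{prop:Ck_max_time} with the uniform inputs from Proposition \ref{prop:C0_est_Calabi}, Lemma \ref{lem:bound_dot_phi} and Lemma \ref{lem:C2_Calabi}, which is precisely what you do (your observation that spatial derivatives of $\f$ and $\tilde\f$ coincide, plus the uniform $C^0$ bound on $\tilde\f$, is the right way to make this precise).
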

\subsection{Convergence}\label{sec:convergence_Calabi}
We now finish the proof for the convergence of the flow following the same argument in Cao \cite{Cao}. Set $\psi = \dot \f + A$ for some large constant $A>0$ such that $\psi>0$. Then $\psi$ satisfies the heat equation 
\begin{equation}\label{eq:heat}
    \dot \psi = g^{ij} \D_i\D_j \psi.
\end{equation}
This is uniformly elliptic by the uniformly estimates for $g$ in the previous section. It follows from a straightforward modification of  the  Li-Yau  Harnack inequality for Heat equations (cf. \cite{LY}) that  there exist positive constants $C_1, C_2,C_3$ depending on ellipticity bounds, such that for $0<t_1<t_2$ we have 
\begin{equation} \label{Harnack}
    \sup_M \psi (t_1,\cdot)\leq \inf_M \psi (t_2,\cdot) \left( \frac{t_2}{t_1}\right)^{C_3} \exp\left( \frac{C_2}{t_2-t_1}+C_1(t_2-t_1) \right).
\end{equation}
Using this  inequality we  imply that there  exist positive constants $C_4$ and $\alpha$ so that
\begin{equation}\label{osc_dotphi}
    {\rm osc}_M \psi (t, \cdot)\leq C_4 e^{-\alpha t}.
\end{equation}
Indeed, we define
\begin{eqnarray}
u_s(t,x)&=&\sup_{x\in M} \psi(s-1,x)-\psi (s-1+t,x)\\
v_s(t,x) &=&\psi(s,x )-\inf_{x\in M}\psi (s-1,x)\\
\omega(t)&=& \sup_{x\in M}\psi (t,x) -\inf_{x\in M}\psi(t,x)=: {\rm osc}_M \psi (t, \cdot).
\end{eqnarray}
Then $u_s$ and $v_s$ satisfy the equation \eqref{eq:heat}. Applying the inequality \eqref{Harnack} with $t_1=1/2$ and $t_2=1$, we obtain
\begin{eqnarray}
\sup_{M} \psi(s-1,x)-\inf_M \psi(s-\frac{1}{2},x)  &\leq& C (\sup_M \psi (s-1, x)-\sup_M \psi(s,x))\\
\sup_{M} \psi(s-\frac{1}{2}),x)-\inf_M \psi(s- 1) &\leq& C (\inf_M \psi (s, x)-\sup_M \psi(s-1,x)),
\end{eqnarray}
where $C>1$ is  independent of $s$.  Therefore we imply that
$$\omega(s-1)+\omega(s-\frac{1}{2})\leq C(\omega(s-1)-\omega(s)),$$
so 
$$\omega(s)\leq \delta \omega(s-1),$$
where $\delta=\frac{C-1}{C}<1$. By induction we imply that $\omega(t)\leq Ce^{-\alpha t},$ where $\alpha= -\log \delta$,  as required.

\medskip
It follows from \eqref{osc_dotphi}  that  $|\tilde\psi(t,x)|\leq C_4 e^{-\alpha t}$ for all $x\in M$, where $$\tilde \psi := \psi - \frac{1 }{Vol_{g_o}(M)}\int_M \psi dV_{g_0} = \D_ t \tilde \f,$$
hence 
\begin{equation}
    \D_t (\tilde \f + \frac{C_4}{\alpha} e^{-\alpha t})=\tilde  \psi -C_4 e^{-\alpha t}\leq 0
\end{equation}
and  $\tilde \f+ \frac{C_4}{\alpha}e^{-\alpha t}$ is decreasing in $t$. Since ${\rm osc}_M \f $ is uniformly bounded, so is  $\tilde \f+ \frac{C_4}{\alpha}e^{-\alpha t}$.  
Thus this function converges to a function $\f_\infty$. By the estimates in Proposition \ref{prop:Ck_Calabi},   the function $\tilde \f+ \frac{C_4}{\alpha}e^{-\alpha t}$ converges in $C^\infty$ to $\f_\infty$. Therefore  $\tilde \f(t,x)$ also converges to $\f_\infty$. Now $\tilde \f$ satisfies
\begin{equation}
    \frac{\D}{\D t} \tilde\f +  \frac{1 }{Vol_{g_o}(M)}\int_M\dot  \f  dV_{g_0} = \log \frac{\det (g_0+\na d \tilde \f )}{\det g_0} +f. 
\end{equation}
Letting $t\rightarrow \infty$, we get
\begin{equation}
    \log \frac{\det (g_0+\na d  \f_\infty )}{\det g_0} +f-c = 0,
\end{equation}
where $c= \lim_{t\rightarrow \infty}\frac{1 }{Vol_{g_o}(M)}\int_M \dot \f  dV_{g_0}$. 
It follows that $\beta(g_\infty) =\eta$ as required, where $g_\infty =g_0+\na d \f_\infty $. This completes the proof of Theorem \ref{thm:conv_calabi}.

\subsection{Application of the uniform estimate}
We finish this section to show that our method can be used to  prove the convergence of the following parabolic Monge-Amp\`ere equation on a smooth compact Riemannian manifold:
\begin{equation}\label{PMA_riemannian}
\frac{\D }{\D t}\f (x,t)=\log \frac{\det( g(x)+ \na^2 \f(t,x) )}{\det g(x)}- \lambda \f- f(x),
\end{equation}
with $\lambda=0$,  where $\na$ is the Levi-Civita connection of $g$ and $\lambda\in \mathbb{R}$.  
This follow is studied by Huisken in \cite{Hui} where the author shows that the flow \eqref{PMA_riemannian}  has a longtime existence for all $\lambda\in \mathbb{R}$. She also proved that when $\lambda>0$, the flow converges in $C^\infty$ to a smooth function. The convergence of the flow for $\lambda\leq 0$ is still unknown. In this section, using our approach for uniform  $C^0$  estimate (cf. Proposition \ref{prop:C0_est_Calabi}) we prove that for  $\lambda=0$, the normalization of $\f$ also converges in $C^\infty$ to a smooth function. In particular this result gives an alternative proof to the existence of  solutions for    Monge-Amp\`ere equations on compact Riemannian manifolds due to \cite{Del_JFA}. 

\begin{theorem}
The normalization of the solution $\f$ of the flow \eqref{PMA_riemannian}:
\[
\tilde{\f}:=\f-\frac{1}{Vol_g}\int_M \f dV_g
\]
converges  in $C^\infty$ to a function $\tilde \f_\infty$.  In particular, the limit $\tilde{\f}$ is  a solution 
of the following Monge-Amp\`ere equation:
\begin{equation}
\det(g+\na^2 \phi )=ce^f \det (g). 
\end{equation}
\end{theorem}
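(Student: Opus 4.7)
The plan is to mirror the strategy already carried out for the twisted Hesse-Koszul flow in Theorem \ref{thm:conv_calabi}, substituting the Levi-Civita Hessian $\nabla^2$ for $\nabla d$ wherever affine coordinates were used, and replacing appeals to affine charts by local geodesic or Euclidean charts for the pointwise ABP-type arguments. The longtime existence is granted by Huisken \cite{Hui}, so the task reduces to deriving $t$-independent $C^\infty$ estimates for the normalization $\tilde\f$ and then running a Harnack/monotonicity argument to upgrade these estimates to convergence.

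First I would differentiate \eqref{PMA_riemannian} in $t$ to get $\partial_t \dot\f = g^{ij}(t) \nabla_i\nabla_j \dot\f$, a linear parabolic equation whose uniform ellipticity is not yet known, but whose maximum principle immediately yields $\|\dot\f\|_{C^0}\leq C$ and consequently the two-sided bound $C^{-1}\det g \leq \det(g+\nabla^2\f)\leq C\det g$, exactly as in Lemma \ref{lem:bound_dot_phi}. Next I would establish the uniform $C^0$ estimate for $\tilde\f$ by the parabolic ABP method from Proposition \ref{prop:C0_est_Calabi}: pick a chart (e.g.\ geodesic normal coordinates) centered at the minimum point of $\tilde\f$ on $[0,T]\times M$, form the test function $\phi=\tilde\f+\tfrac{\delta^2}{4}|x|^2+(t-t_0)^2$, and invoke Tso's estimate (Theorem \ref{thm:ABP_parabolic}). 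The only ingredient that needs justification in the Riemannian setting is that the family $\mathcal{F}_0=\{u\in C^2(M)\,:\,g+\nabla^2 u\geq 0,\ \sup_M u=0\}$ is relatively compact in $L^1(M,dV_g)$; this is standard once one notes that in any local chart, $u+\frac{1}{2}g_{ij}(x_0)x^ix^j$ is plurisubharmonic-like up to lower-order terms controlled by the Christoffel symbols, so the Hörmander-type compactness argument applies verbatim.

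With $\tilde\f$ uniformly bounded and $\dot\f$ controlled, I would prove the trace estimate $\tr_{g_0} g \leq C$ by applying the maximum principle to $G=\log\tr_{g_0} g - B\tilde\f$ (cf.\ Lemma \ref{lem:C2_Calabi}); the Riemannian version of the computation in Lemma \ref{lem:evol_log_tr} produces extra curvature terms of $g_0$ that are bounded and can be absorbed into the constant $C_0$ by choosing $B$ large. Combined with the two-sided determinant bound and Lemma \ref{lem:ineq_volumes}, this gives uniform equivalence $C^{-1}g\leq g(t)\leq Cg$, hence uniform ellipticity. The Evans-Krylov theorem and parabolic Schauder estimates then yield the full $C^k$ bounds for $\tilde\f$, as in Proposition \ref{prop:Ck_Calabi}. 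For convergence, I would apply the Li-Yau Harnack inequality to the positive heat-type quantity $\psi=\dot\f+A$ and repeat the iteration leading to \eqref{osc_dotphi}, obtaining $\mathrm{osc}_M\dot\f(t,\cdot)\leq C e^{-\alpha t}$. Writing $\dot{\tilde\f}=\dot\f-\frac{1}{Vol_g}\int_M\dot\f\,dV_g$ this gives $|\dot{\tilde\f}|\leq C e^{-\alpha t}$, so $\tilde\f+\frac{C}{\alpha}e^{-\alpha t}$ is monotone decreasing, bounded, and converges to some $\tilde\f_\infty$ in $C^0$; the $C^\infty$ estimates upgrade this to $C^\infty$ convergence. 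Passing to the limit in \eqref{PMA_riemannian} (where $\dot\f$ converges uniformly to the constant $c=\lim_{t\to\infty}\frac{1}{Vol_g}\int_M\dot\f\,dV_g$) gives $\det(g+\nabla^2\tilde\f_\infty)=c\, e^{f}\det g$ in the form stated.

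The main obstacle is the adaptation of the ABP-based $C^0$ estimate to the Riemannian setting, since affine coordinates are no longer available: the Hessian $\nabla^2\f$ in local charts acquires first-order terms $\Gamma^k_{ij}\partial_k\f$, so the matrix $(g_{ij}+\nabla^2_{ij}\f)$ whose determinant enters the equation differs from the ordinary Euclidean Hessian of an auxiliary function. One must verify that on a small enough chart these lower-order perturbations do not destroy the contact-set analysis, and that the $L^1$-compactness of $\mathcal{F}_0$ remains valid. Once this is in place the rest of the proof is structurally identical to the Hessian case, so I expect no further essential difficulty.
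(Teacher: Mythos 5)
Your proposal follows essentially the same route as the paper: bound $\dot\f$ by the maximum principle applied to the differentiated (heat-type) equation, obtain the uniform $C^0$ bound on $\tilde\f$ by the parabolic ABP/Tso argument in local coordinates together with the $L^1$-compactness of the normalized family, prove the trace bound via the maximum principle applied to $\log\tr_g\tilde g - B\tilde\f$ (with the extra curvature terms absorbed into constants), then use Evans--Krylov and Schauder estimates and the Li--Yau Harnack iteration to get exponential decay of the oscillation of $\dot\f$ and hence $C^\infty$ convergence to a solution of $\det(g+\na^2\phi)=ce^f\det g$. This matches the paper's proof step for step, including the point you flag as the main obstacle (replacing affine by arbitrary/normal coordinates in the ABP and trace computations), which the paper handles in exactly the way you describe.
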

\begin{proof}
The key ingredient of the convergence is to prove that $\tilde{\f}$, $\dot \f $  and $\tr_g (g+\na^2 \f) $ is uniformly bounded since all higher uniform estimates are derived from the one of $\tilde{\f}$ and $\dot \f $.

The uniform estimate for  $\dot \f $  is straightforward from  the maximum principle since $\phi=\dot \f$ satisfies the heat equation
\begin{equation}
\frac{\D}{\D t}\phi = \tilde g^{ij}\na_{ij} \phi,
\end{equation}
where $\tilde{g}=g+ \na^2 \phi$.  

For the uniform bound of $\tilde{\f}$ we follow the same argument as in the proof of  Proposition \ref{prop:C0_est_Calabi} using any local coordinates instead of  affine coordinates as in the previous proof.  

For the  uniform estimate for $\tr_g(g+\na \phi) $ we follow the same argument as in Section \ref{sec:higher_est_Calabi}. By the same computation as in Lemma \ref{lem:evol_log_tr},  using normal coordinates for $g$,  we have
  \begin{eqnarray}
  \left( \frac{\D}{\D t}-L_{\tilde g} \right) \log \tr_{g}\tilde g \leq C_0   \tr_{\tilde g} g +C_1,   
  \end{eqnarray}
where $\tilde{g}=g+\na^2\f$,  $C_0 $ depends only on $g$ and $C_1$ depends  only on $f$ and $g$. The constant $C_1$ appears when we compute $\frac{\D}{\D t}\tr_{g}\tilde{g}$.  Applying the maximum principle for $G= \log \tr_g \tilde{g}-(C_0+1) \tilde{\f}$ and following the same   argument in the proof of Lemma \ref{lem:C2_Calabi} we get a uniform bound for $\tr_g \tilde g$ as required. 

Higher order estimates now follow from  Evans-Krylov and Schauder estimates. Finally the convergence result  follows by the same line as in Section \ref{sec:convergence_Calabi}. 
\end{proof}


\begin{thebibliography}{biblio}

\bibitem{Ama}
Amari, S.: Information Geometry and Its Applications. Applied Mathematical Sciences,
vol. 194. Springer, Berlin (2016)
\bibitem{AN}
Amari, S., Nagaoka, H.: Methods of Information Geometry. Translations of Mathematical
Monographs, vol. 191. Am. Math. Soc./Oxford University Press, Providence/London (2000)


\bibitem{As}
P. S. Aspinwall, T. Bridgeland, A. Craw, M. R. Douglas, M. Gross, A. Kapustin, G. W. Moore, G. Segal, B. Szendroi, and P. M. H. Wil- son, Dirichlet branes and mirror symmetry, Clay Mathematics Institute, Cambridge, MA, 2009.


\bibitem{Ay}
Ay, N., Jost, J., Le, H.V., Schwachh\"ofer, L.: Information Geometry, Springer, (2017)

\bibitem{Bed}
Bedford, E.: Survey of pluri-potential theory, in Several complex variables (Stockholm, 1987/1988), 48–97, Math. Notes 38, Princeton Univ. Press, Princeton, NJ, 1993.

\bibitem{Bl}
Błocki, Z.:
On uniform estimate in Calabi-Yau theorem.
Sci. China Ser. A 48 (2005), suppl., 244–247.
\bibitem{Bl2}
Błocki, Z.:
On the uniform estimate in the Calabi-Yau theorem, II, Science China Mathematics, 54 (2011), 1375-1377 
\bibitem{Cao}
Cao, H-D.: Deformation of K\"ahler metrics to K\"ahler-Einstein metrics
on compact K\"ahler manifolds. Invent. Math. 81 (1985), no. 2, 359-
372.
\bibitem{CY}
Cheng, S.-Y., Yau, S.-T.: On the real monge-amp\`ere equation and affine flat structures. In: Proceedings of the 1980 Beijing Symposium on Differential Geometry and Differential Equations, vol. 1-3, pp. 339–370 (1982).
\bibitem{CV}
Caffarelli, L. A.; Viaclovsky, J.  A.: On the regularity of solutions to Monge-Ampère equations on Hessian manifolds. Comm. Partial Differential Equations 26 (2001), no. 11-12, 2339–2351.

\bibitem{Del}
Delano\"e,  P.:  Remarques sur les vari\'et\'es localement hessiennes. Osaka J. Math. 26 (1989), no. 1, 65--69. 

\bibitem{Del_JFA}
Delano\"e, P.:
Équations du type Monge-Ampère sur les variétés riemanniennes compactes. I,  II \& III
J. Functional Analysis 40 (1981), no. 3,  358–386, 341–353, 403–430. 
  \bibitem{Dom}
Dombrowski, P.: On the geometry of the tangent bundle. J. Reine Angew. Math. 210 (1962) 73–88.
\bibitem{Dui}
Duistermaat, J.:  On Hessian Riemannian structures. Asian J. Math. 5 (2001), 79–91. 

\bibitem{GW}
Gross, M.  and  Wilson, P. M. H.:  Large complex structure limits of K3 surfaces, J. Differential Geom., 55 (2000), no. 3, 475–546.

\bibitem{Hit}
Hitchin, N.: The moduli space of special Lagrangian submanifolds, Ann.
Scuola Norm. Sup. Pisa Cl. Sci. (4), 25 (1997), no. 3-4, 503–515

\bibitem{Hor}
H\"ormander, L.: Notions of convexity. Progress in Mathematics, 127. Birkhäuser Boston, Inc., Boston, MA, 1994. viii+414 pp. ISBN: 0-8176-3799-0
\bibitem{Hui}
Huisken, B.: Parabolic Monge-Amp\`ere equations on Riemannian manifolds. J. Funct. Anal. 147 (1997), no. 1, 140–163
\bibitem{HM}
 Hultgren, J. and  \"Onnheim, M.:
An Optimal Transport Approach to Monge–Ampère Equations on Compact Hessian Manifolds, The Journal of Geometric Analysis (2019) 29:1953–1990
%
 \bibitem{Kos61}
 Koszul, J-L.:
 Domaines born\'es homog\`enes et orbites de groupes de transformations affines. Bulletin de la S. M. F., tome 89 (1961), p. 515-533.
 
 \bibitem{Kos62}
Koszul, J-L.: Ouverts convexes homogènes des espaces affines. Math. Z. 79 (1962), 254–259.
 \bibitem{Kos65}

Koszul, J-L.: Variétés localement plates et convexité.  Osaka Math. J. 2 (1965), 285–290.
 \bibitem{Kos68}
Koszul, J-L.:  Déformations de connexions localement plates. (French) Ann. Inst. Fourier (Grenoble) 18 (1968), no. fasc., fasc. 1, 103–114.

\bibitem{Kry}
Krylov,  N. V.: Boundedly inhomogeneous elliptic and parabolic equations, Izv. Akad. Nauk
SSSR Ser. Mat., 46(3) (1982), 487-523.
\bibitem{Kry_book}
Krylov,  N.V.: Lectures on elliptic and parabolic equations in H\"older spaces, Graduate Studies in Mathematics, Vol. 12. American Mathematical Society, Providence, RI, 1996.
\bibitem{KS}
 Kontsevich,  M. and  Soibelman, Y.: Homological mirror symmetry and torus fibrations, in Symplectic geometry and mirror symmetry, 203–263, World Sci. Publishing 2001.

 \bibitem{Lau}
  Lauritzen, S.: Statistical manifolds. {\sl Differential geometry in statistical inference}. Institute of Mathematical Statistics Lecture Notes—Monograph Series, 10. Institute of Mathematical Statistics, Hayward, CA, 1987. iv+240 pp. ISBN: 0-940600-12-9
\bibitem{Lie}
 Lieberman, G.: Second Order Parabolic Differential Equations, World Scientific Publishing, 1996.
 \bibitem{LY}
   Li, P. and Yau, S.-T.: On the parabolic kernel of the Schr\"odinger operator, Acta Math. 156 (1986), no. 3–4, 153–201.
\bibitem{Lof02}
 Loftin, J.: Affine spheres and Kähler-Einstein metrics, Math. Res. Lett. 9 (2002) 425–432.
  \bibitem{Lof09}
  Loftin, J.:   Affine Hermitian-Einstein metrics. Asian J. Math. 13 (2009), no. 1, 101–130.
\bibitem{LYZ}
 Loftin, J.;  Yau,S.-T. and  Zaslow, E.:
Affine manifolds, SYZ geometry and the "Y" vertex, J. Differential Geom.
Volume 71, Number 1 (2005), 129-158.


%
 \bibitem{MM}
 Mirghafouri, M. ; Malek, F.:
Long-time existence of a geometric flow on closed Hessian manifolds. 
J. Geom. Phys. 119 (2017), 54–65.
\bibitem{Pet}
Petersen, P.:  Riemannian Geometry. third edition, GTM 171, Springer (2016)


\bibitem{PT}
Phong, H. D., T\^o, T. D.: Fully non-linear parabolic equations on compact Hermitian manifolds. arXiv:1711.10697. To appear in Annales Scientifiques de l’ENS. 




\bibitem{SYZ}
Strominger, A., Yau, S.-T. , and  Zaslow, E.: Mirror symmetry is T -duality, Nuclear Phys. B, 479 (1996), no. 1-2, 243–259.
\bibitem{Shi}
Shima, H.: The Geometry of Hessian Structures.  World Scientific Pub Co Inc (March 1, 2007)
\bibitem{SW}
Song, J. and Weinkove,B.: An introduction to the Kähler-Ricci flow. {\sl An introduction to the Kähler-Ricci flow}, 89–188, Lecture Notes in Math., 2086, Springer, Cham, 2013.

\bibitem{Sze}
Sz\'ekelyhidi, G.: Fully non-linear elliptic equations on compact Hermitian manifolds , J. Differential Geom. 109 (2018), no. 2, 337--378

\bibitem{Tot}
Totaro, B. The curvature of a Hessian metric. Internat. J. Math. 15 (2004), no. 4, 369–391. 

\bibitem{GT}
Gilbarg, D. and Trudinger, N. S.: Elliptic partial differential equations of second order. Second edition. Grundlehren der Mathematischen Wissenschaften, 224. Springer-Verlag, Berlin, 1983. xiii+513 pp. ISBN: 3-540-13025-X
\bibitem{TZ}
 Tian, G.  and  Zhang, Z.: On the Kähler-Ricci flow of projective manifolds of general
type, Chin. Ann. Math. 27 (2006), no. 2, 179–192.
\bibitem{Tos}
Tosatti,V.: KAWA lecture notes on the K\"ahler-Ricci flow, Ann. Fac. Sci. Toulouse Math. 27 (2018), no.2, 285-376.
\bibitem{TW15}
Tosatti, V. and  Weinkove, B.: On the evolution of a Hermitian metric by its Chern-Ricci form. J. Differential Geom. 99 (2015), no. 1, 125–163
\bibitem{Tso}
Tso,K.: On an Aleksandrov-Bakel’man type maximum principle for second-order parabolic
equations, Comm. Partial Differential Equations 10 (1985), no. 5, 543–553.

\bibitem{Wei}
Weinkove, B.: The K\"ahler-Ricci flow on compact K\"ahler manifolds. Geometric analysis, 53–108, IAS/Park City Math. Ser., 22, Amer. Math. Soc., Providence, RI, 2016.
\bibitem{Yau}
Yau,  S.-T.: On the Ricci curvature of a compact K\"ahler Manifold and the complex Monge-Amp\`ere equation I. Commun. Pure Appl. Math. 31, 339–411 (1978)
\end{thebibliography}
\end{document}